\newcommand{\KK}{{\cal K}}
\newcommand{\CC}{{\cal C}}
\newcommand{\FF}{{\cal F}}
\newcommand{\SSS}{{\cal S}}
\newcommand{\JJ}{{\cal J}}
\newcommand{\TT}{{\cal T}}
\newcommand{\oldK}{{\ensuremath{10^{28}}}}
\newcommand{\newK}{{\ensuremath{10^{21}}}}
\newtheorem{theorem}{Theorem}
\newtheorem{corollary}[theorem]{Corollary}
\newtheorem{lemma}[theorem]{Lemma}
\newcommand{\claim}[2]{\begin{equation}\mbox{\parbox{\linewidth}{{\em #2}}}\label{#1}\end{equation}}
\newcommand{\oururl}{\url{http://arxiv.org/abs/1305.2670}}
\title{$4$-critical graphs on surfaces without contractible $(\le\!4)$-cycles}
\author{Zden\v{e}k Dvo\v{r}\'ak\thanks{Charles University, Prague, Czech
Republic. E-mail: \protect\href{mailto:rakdver@iuuk.mff.cuni.cz}{\protect\nolinkurl{rakdver@iuuk.mff.cuni.cz}}.
Supported by
the Center of Excellence -- Inst. for Theor. Comp. Sci., Prague (project P202/12/G061 of Czech Science Foundation), and
by project LH12095 (New combinatorial algorithms - decompositions, parameterization, efficient solutions) of Czech Ministry of Education.}\and
Bernard Lidick\'y\thanks{
Charles University, Prague, Czech Republic and University of Illinois at Urbana-Champaign, Urbana, USA. E-mail:
\protect\href{mailto:lidicky@illinois.edu}{\protect\nolinkurl{lidicky@illinois.edu}}.
Supported by NSF grant DMS-1266016.}
}
\date{\today}
\begin{document}
\maketitle

\begin{abstract}
We show that if $G$ is a $4$-critical graph embedded in a fixed surface $\Sigma$ so that every contractible cycle has length at
least $5$, then $G$ can be expressed as $G=G'\cup G_1\cup G_2\cup\ldots\cup G_k$, where $|V(G')|$ and $k$ are bounded by a constant
(depending linearly on the genus of $\Sigma$)
and $G_1$, \ldots, $G_k$ are graphs (of unbounded size) whose structure we describe exactly.  The proof is computer-assisted---we
use computer to enumerate all plane $4$-critical graphs of girth $5$ with a precolored cycle of length at most $16$, that
are used in the basic case of the inductive proof of the statement.
\end{abstract}

\section{Introduction}
The problem of $3$-coloring triangle-free graphs embedded in a fixed surface,
motivated by the celebrated Gr\"otzsch theorem~\cite{grotzsch}, has drawn much
attention. 
Thomassen~\cite{thom-torus} showed that if a graph $G$ is embedded in
the torus or the projective plane so that every contractible
cycle has length at least $5$, then $G$ is $3$-colorable.
Thomas and Walls~\cite{thomwalls} showed that graphs of girth at least $5$ embedded in the Klein bottle are $3$-colorable
and gave a description of all $4$-critical graph on the Klein bottle without contractible cycles of length
at most $4$.
Gimbel and Thomassen~\cite{gimbthom} showed that graphs of girth 6 embedded in the double torus are 3-colorable
and described triangle-free projective plane graphs that are not 3-colorable.

Recently, Dvo\v{r}\'ak, Kr\'al' and Thomas~\cite{proof-lincrit} gave a structural description of $4$-critical (i.e.,
minimal non-$3$-colorable) triangle-free graphs embedded in a fixed surface,
and used this result to give a linear-time algorithm to decide $3$-colorability
of such graphs.  In particular, this description implies the following.

\begin{theorem}[Dvo\v{r}\'ak, Kr\'al' and Thomas~\cite{proof-lincrit}]\label{thm-dktsim}
There exists an absolute constant $K$ such that every $4$-critical graph of girth $5$
embedded in a surface of genus $g$ has at most $Kg$ vertices.
\end{theorem}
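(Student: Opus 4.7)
My plan is to derive Theorem~\ref{thm-dktsim} as a corollary of the full structural description of $4$-critical triangle-free graphs on surfaces obtained in~\cite{proof-lincrit}. That result asserts that any such graph $G$ embedded in a surface of genus $g$ admits a decomposition $G = G_0 \cup H_1 \cup \cdots \cup H_t$, where both $|V(G_0)|$ and $t$ are bounded by a linear function of $g$, and each $H_i$ comes from a finite list of explicit ``exchangeable'' subgraphs (generalized Thomas--Walls quadrangulated chains and a small number of additional gadgets), attached to $G_0$ along a boundary of bounded size.

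With the structural theorem as a black box, the argument is short. Every subgraph on the finite list contains a $4$-cycle: this is immediate for the Thomas--Walls chains, which are concatenations of $4$-faces, and it can be verified one-by-one for the remaining named gadgets. Consequently, if $G$ has girth at least $5$, then no $H_i$ can occur, so $t=0$ and $G=G_0$. Therefore $|V(G)|=|V(G_0)|\le Kg$ for the constant $K$ supplied by the structural theorem.

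The main obstacle is the structural theorem itself, whose proof is long and case-heavy; but once accepted, essentially nothing else is needed. Two minor loose ends remain. First, one should check that the ``core'' $G_0$ really has size linear in $g$ rather than merely bounded in terms of $g$; this follows from Euler's formula applied to $G_0$ together with the fact that the gadget interfaces have bounded total length and that the minimum degree of a $4$-critical graph is at least $3$. Second, one should handle the small-genus cases: for $g=0$, Gr\"otzsch's theorem implies that no $4$-critical graph of girth at least $5$ embeds in the sphere, so the bound holds vacuously, and any finitely many low-genus exceptions can simply be absorbed into the constant $K$.
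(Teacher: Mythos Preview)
The paper does not prove Theorem~\ref{thm-dktsim}; it is quoted from~\cite{proof-lincrit}. The only related content here is the more general Theorem~\ref{thm-dkt}, also quoted from~\cite{proof-lincrit}, of which Theorem~\ref{thm-dktsim} is simply the special case $k=0$ (no precolored faces, so the girth-$5$ hypothesis forces every $(\le\!4)$-cycle condition to be vacuous and the bound reads $|V(G)|\le Kg$). There is thus nothing in the paper to compare your argument against beyond that one-line specialization.

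Your route via a decomposition $G=G_0\cup H_1\cup\cdots\cup H_t$ with unbounded Thomas--Walls-type pieces is plausible in spirit, but be careful about what you are actually invoking. The decomposition you describe is essentially Theorem~\ref{thm-main} of \emph{this} paper (with the pieces lying in the class~$\CC$), and the proof of Theorem~\ref{thm-main} here \emph{uses} Theorem~\ref{thm-dkt} as a black box (see Lemma~\ref{lemma-cylmain} and the proof of Theorem~\ref{thm-maingen}). Hence deriving Theorem~\ref{thm-dktsim} from that decomposition would be circular. If~\cite{proof-lincrit} independently proves a decomposition of the exact shape you describe, your argument is fine, but you should verify that and cite it precisely rather than sketch it; the direct and safe route, and the one the present paper takes implicitly, is to set $k=0$ in Theorem~\ref{thm-dkt}.
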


This improves a doubly-exponential bound by Thomassen~\cite{thom-surf}.  
Let us note that the linear bound was proved by Postle~\cite{luke-hyperbolic}
also for girth 5 and 3-list coloring.
Somewhat
unsatisfactorily, the bound on $K$ given by Dvo\v{r}\'ak et al.~\cite{proof-lincrit}
is rather weak, proving that $K<\oldK$ (we are not aware of any non-trivial lower bound, and suspect that $K\approx 100$ should suffice).
One of the reasons why this bound is so large
is hidden in the handling of the basic case of the induction, where they prove
that if $G$ is a plane graph with exactly two faces $C_1$ and $C_2$ of length at most $4$,
all other cycles have length at least $5$ and the distance between $C_1$ and $C_2$ is at least $1500000$,
then any precoloring of $C_1$ and $C_2$ extends to a proper coloring of $G$ by three colors.
In this paper, we give a computer-assisted proof showing that it suffices to assume
that the distance between $C_1$ and $C_2$ is at least $4$, which can be used to show that $K<\newK$.
We were originally hoping in a bigger improvement on $K$.

\begin{theorem}\label{thm-dist}
Let $G$ be a plane graph and let $C_1$ and $C_2$ be faces of $G$ of length at most $4$, such that every cycle in $G$ distinct
from $C_1$ and $C_2$ has length at least $5$.  If the distance between $C_1$ and $C_2$ is at least $4$, then every precoloring of $C_1\cup C_2$
extends to a proper $3$-coloring of $G$.
\end{theorem}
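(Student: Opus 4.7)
Suppose for contradiction the theorem fails, and let $(G,\varphi)$ be a counterexample with $|V(G)|$ minimum. A standard minimality argument then shows that $\varphi$ extends to every proper subgraph of $G$ that retains $C_1\cup C_2$ (otherwise one such subgraph would be a strictly smaller counterexample). Let $P=v_0v_1\cdots v_k$ be a shortest path in $G$ from $V(C_1)$ to $V(C_2)$, with $v_0\in V(C_1)$ and $v_k\in V(C_2)$; by hypothesis $k=\mathrm{dist}(C_1,C_2)\ge 4$.

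\textbf{Main case $k=4$: cut along $P$ and apply the enumeration.} The plan is to cut the plane embedding along $P$. Each vertex of $P$ becomes two copies, each edge of $P$ is duplicated, and the remaining edges at each $v_i$ are distributed between the two copies according to the local cyclic order. The resulting plane graph $G^\star$ has a single outer face $C^\star$, which is a cycle of length $2k+|V(C_1)|+|V(C_2)|\le 8+4+4=16$, and every other face of $G^\star$ has length $\ge 5$ (inherited from $G$). Let $\Psi$ denote the finite set of proper $3$-colorings $\psi$ of $P$ that agree with $\varphi$ at $v_0$ and $v_k$. Each $\psi\in\Psi$ induces a precoloring $\varphi^\star_\psi$ of $C^\star$ by giving both copies of $v_i$ the color $\psi(v_i)$ and retaining $\varphi$ on the remaining vertices of $V(C_1)\cup V(C_2)$. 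Any proper $3$-coloring of $G^\star$ extending $\varphi^\star_\psi$ descends to a proper $3$-coloring of $G$ extending $\varphi$, because both copies of each $v_i$ receive the same color. Since $\varphi$ has no extension, no $\varphi^\star_\psi$ extends, so for every $\psi\in\Psi$ we may extract a $\varphi^\star_\psi$-critical subgraph $G^\star_\psi\subseteq G^\star$; each is a plane $4$-critical graph of girth $\ge 5$ with a precolored boundary face of length at most $16$. We now invoke the computer-generated complete list $\mathcal{L}$ of all such objects and run a case analysis: since the family $(G^\star_\psi)_{\psi\in\Psi}$ must sit inside the common graph $G^\star$ with the common precoloring $\varphi|_{V(C_1)\cup V(C_2)}$, no consistent assignment of members of $\mathcal{L}$ can simultaneously block every $\psi$, and some $\psi$ in fact admits an extension---contradicting the counterexample.

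\textbf{Handling $k\ge 5$ and the main obstacles.} When the distance strictly exceeds $4$, the bound $2k+|V(C_1)|+|V(C_2)|$ can exceed $16$ and the enumeration no longer applies to the cut of the entire path. The plan is to reduce to the case $k=4$ by using criticality together with the girth hypothesis to construct a strictly smaller counterexample---for example, by a carefully chosen modification of $G$ near an interior vertex of $P$ or near a reducible configuration forced by the slack of $k\ge 5$---contradicting the minimality of $|V(G)|$. The two principal obstacles are (i) making this $k\ge 5$ reduction rigorous without shortening girth below $5$ or dropping the distance below $4$, and (ii) the computer component itself, on whose correctness the argument ultimately rests: the enumeration must genuinely contain every plane $4$-critical graph of girth $\ge 5$ with a precolored face of length at most $16$, and the subsequent case analysis must exhaustively rule out every way in which such graphs could be overlaid inside a single $G^\star$ to defeat all $\psi\in\Psi$ at once.
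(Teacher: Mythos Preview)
Your outline for the case $k=4$ is roughly the paper's Lemma~\ref{lemma-cyl-le4}, though you make it harder than necessary: rather than extracting a separate critical subgraph $G^\star_\psi$ for each coloring $\psi$ of $P$ and then arguing about their coexistence inside $G^\star$, the paper simply observes that the full cut graph $H=G^\star$ is itself $B$-critical (via Lemma~\ref{lemma-crs}, since $G$ is $(C_1\cup C_2\cup P)$-critical), so $H$ lies in the enumerated set $\KK_5\cup\ldots\cup\KK_{16}$; one then checks by computer which members of this set glue back to a cylinder graph with $d(C_1,C_2)\ge 4$, and the answer is none.

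The genuine gap is the case $k\ge 5$, which you leave as a ``plan'' with no mechanism. This is the substantive part of the proof---the paper's Lemma~\ref{lemma-cyl-ge5}---and it is not a simple reduction to $k=4$. It is a several-page discharging argument: one shows (claims~(\ref{cl-2conn})--(\ref{cl-outer})) that a minimal counterexample is $2$-connected, has no adjacent degree-$2$ vertices, has all internal faces of length exactly $5$, and has no $5$-face with four or five light vertices; each of these claims is proved by an identification or deletion that produces a smaller graph which may acquire new $(\le\!4)$-cycles, and those reductions are controlled by invoking the explicit lists of critical cylinder graphs compiled in Lemmas~\ref{lemma-cyl-le4}--\ref{lemma-join} (packaged as Corollary~\ref{cor-cyl-special}). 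Only after all of this does the charge count contradict Euler's formula. Your proposal contains none of this machinery, and ``construct a strictly smaller counterexample near an interior vertex of $P$'' is not a workable substitute: the natural reductions create short cycles, and handling those is precisely what forces the long case analysis.
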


Combining these results, we give a more precise description of the structure
of the $4$-critical graphs without contractible cycles of length at most $4$
(a cycle is \emph{contractible} if it separates the surface to two parts and at least one of them is homeomorphic to the open disc).
Thomassen~\cite{thom-torus} showed that every graph of girth at least $5$ embedded in the projective plane or in the torus
is $3$-colorable.  Actually, he proved a stronger claim that enables him to apply induction:
every graph embedded in the projective plane or in the torus so that all contractible
cycles have length at least $5$ (but there may be non-contractible triangles or $4$-cycles)
is $3$-colorable.   Thus, it might seem possible to strengthen Theorem~\ref{thm-dktsim} by allowing non-contractible triangles or $4$-cycles.
However, Thomas and Walls~\cite{thomwalls} exactly characterized $4$-critical graphs embedded
in the Klein bottle so that no contractible cycle has length at most $4$,
showing that there are infinitely many such graphs.

Let  $\CC$ be the class of plane graphs that can be obtained from a
cycle of length $4$ by a finite number of repetitions of the following
operation: given a graph with the outer face $v_1v_2v_3v_4$ of length $4$ such
that $v_1$ and $v_3$ have degree two, add new vertices $v'_2$, $v'_3$ and
$v'_4$ and edges $v_1v'_2$, $v'_2v'_3$, $v'_3v'_4$, $v'_4v_1$ and $v_3v'_3$,
and let $v_1v'_2v'_3v'_4$ be the outer face of the resulting graph. Examples
of elements of $\CC$ are the $4$-cycle, and the graphs $Z_3$ in Figure~\ref{fig-44}, $A_{11}$ in
Figure~\ref{fig-join2b} and $Z_3A_{11}a$ and $Z_3A_{11}b$ in
Figure~\ref{fig-join3c}.  Let $\CC'$ be the class of graphs
obtained from those in $\CC$ 
by adding a chord joining the pair of vertices of degree two in both of the $4$-faces
 (thus introducing $4$ triangles).  Each graph in $\CC'$ can be embedded in the Klein bottle
by putting crosscaps on both newly added chords; the $4$-faces of a graph in $\CC$ thus become $6$-faces
in such an embedding of the corresponding graph in $\CC'$.

Thomas and Walls~\cite{thomwalls} proved that a graph embedded in the Klein bottle without contractible $(\le\!4)$-cycles
is $4$-critical if and only if it belongs to $\CC'$.  We extend this result to other surfaces.

\begin{theorem}\label{thm-main}
There exists a function $f(g)=O(g)$ with the following property.  Let $G$ be a $4$-critical graph embedded in a surface $\Sigma$ of genus $g$ so that every
contractible cycle has length at least $5$.  Then $G$ contains a subgraph $H$ such that
\begin{itemize}
\item $|V(H)|\le f(g)$, and
\item if $F$ is a face of $H$ that is not equal to a face of $G$, then $F$ has exactly two boundary walks, each of the walks has length  $4$,
and the subgraph of $G$ drawn in the closed region corresponding to $F$ belongs to $\CC$.
\end{itemize}
\end{theorem}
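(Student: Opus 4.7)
The plan is to combine the linear vertex bound for $4$-critical girth-$5$ graphs of Theorem~\ref{thm-dktsim} with the sharp distance criterion of Theorem~\ref{thm-dist}, via a topological enumeration of the short non-contractible cycles of $G$. Let $\FF$ be the collection of all cycles in $G$ of length at most $4$; by hypothesis every element of $\FF$ is non-contractible. Using the fact that a surface of Euler genus $g$ admits at most $O(g)$ free homotopy classes of simple closed curves that can be represented by combinatorial cycles of bounded length in any cellular embedding, the members of $\FF$ fall into at most $O(g)$ homotopy classes. Within each homotopy class the cycles are either crossing or nested; I would take a maximal collection $\KK\subseteq\FF$ of pairwise non-crossing short cycles (at most $O(g)$ in each class) and let $H_0$ be the subgraph of $G$ formed by their union. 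Then $|V(H_0)|=O(g)$, and the open regions of $\Sigma\setminus H_0$ are all either discs or annuli, the latter cobounded by two homotopic short cycles of $\KK$ with no short cycle strictly between them.

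Next I would analyse each open region $R$ of $\Sigma\setminus H_0$. Its interior contains a plane subgraph $G_R$ of $G$ of girth at least $5$, by construction. If $R$ is a disc bounded by a cycle of $H_0$ of length at most $4$, then $4$-criticality combined with Theorem~\ref{thm-dktsim} (applied relative to the boundary) forces $|V(G_R)|$ to be bounded by an absolute constant, so I would absorb all such $G_R$ into $H$ at a further cost of $O(g)$ vertices. If instead $R$ is an annulus cobounded by $C_1,C_2\in\KK$ of length at most $4$, the distance dichotomy applies: were the distance between $C_1$ and $C_2$ inside $R$ at least $4$, then by $4$-criticality $G$ minus the interior of $R$ would be $3$-colourable, and the restriction of any such colouring would be a precolouring of $C_1\cup C_2$ that by Theorem~\ref{thm-dist} extends to $R$, contradicting that $G$ is not $3$-colourable. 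Hence the distance in $R$ is at most $3$.

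Finally, translating this distance bound into the precise structure of $\CC$ requires a further classification of plane critical graphs with two precoloured $(\le\!4)$-faces at distance at most $3$ and all other cycles of length at least $5$: the outcome should be that both boundary cycles must have length exactly $4$ (a triangle boundary would force either a short interior cycle or a contractible $(\le\!4)$-cycle) and that the subgraph drawn in $R$ is obtained from $C_4$ by repeatedly applying the construction defining $\CC$. Setting $H$ to be $H_0$ augmented by the absorbed finite discs then yields $|V(H)|=f(g)=O(g)$, every disc face of $H$ is a face of $G$, and every annular face of $H$ satisfies the second bullet of the theorem. The main obstacle is this last structural identification: bounding the distance to at most $3$ is not yet the same as placing the interior in $\CC$, and a meticulous case analysis paralleling the Thomas--Walls classification~\cite{thomwalls} on the Klein bottle is required. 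This is precisely where the computer-assisted enumeration of plane critical graphs with a precoloured cycle of length up to $16$, announced in the abstract, feeds into the proof.
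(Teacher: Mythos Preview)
Your outline has a genuine gap at the very first step. You form $H_0$ as the union of a maximal collection $\KK$ of pairwise non-crossing cycles of length at most $4$ and assert $|V(H_0)|=O(g)$. But within a single homotopy class there can be \emph{arbitrarily many} pairwise disjoint (nested) $4$-cycles: the graphs in $\CC$ themselves, embedded in the Klein bottle, contain an unbounded chain of homotopic non-contractible $4$-cycles. A maximal non-crossing collection therefore has unbounded size, and $|V(H_0)|$ is not controlled by $g$. If instead you try to keep $\KK$ small by selecting only $O(1)$ cycles per homotopy class, then the annular regions between consecutive chosen cycles will in general contain further short cycles, so Theorem~\ref{thm-dist} (which requires all non-boundary cycles to have length at least $5$) does not apply, and your distance dichotomy collapses. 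A second, smaller issue: the complementary regions of $\Sigma\setminus H_0$ need not be discs or annuli, since there may be handles carrying no short cycle at all; such regions have bounded size by Theorem~\ref{thm-dkt}, but your case split does not account for them.

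The paper avoids this circularity by proceeding inductively rather than by a single global cut. It proves a stronger statement (Theorem~\ref{thm-maingen}) allowing precoloured faces, and cuts along \emph{one} short non-surrounding cycle at a time, reducing either the genus or (in the separating case) the total $g+k$. The base case is the cylinder, where the full classification (Corollary~\ref{cor-cyl}) says the critical graph is either one of finitely many explicit small graphs \emph{or} belongs to $\CC$; this is exactly the dichotomy you gesture at in your last paragraph, but the finite exceptional list is essential and cannot be elided. When no short non-surrounding cycle remains, every short cycle surrounds a single precoloured face, and one applies Theorem~\ref{thm-dkt} to the complement of the innermost such cycles together with Lemma~\ref{lemma-cylmain} in the surrounded discs. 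Your instinct that the annular pieces are where $\CC$ appears is correct, but the mechanism for isolating boundedly many of them is induction on $(g,k)$, not a one-shot topological decomposition.
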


\section{Preliminaries}

In order to state more technical results necessary to prove Theorems~\ref{thm-dist} and \ref{thm-main}, we need a few definitions.
The graphs considered in this paper are undirected and without loops and parallel edges.
By a {\em coloring} of a graph we always mean a proper $3$-coloring.
By the {\em genus} $g(\Sigma)$ of a surface $\Sigma$ we mean the Euler genus, i.e., $2h+c$, where $h$ is the number
of handles and $c$ is the number of crosscaps attached to the sphere in order to create $\Sigma$.  If $G$ is a graph embedded
in $\Sigma$, a {\em face} $F$ of $G$ is a maximal connected open subset of $\Sigma-G$.  Sometimes, we also let $F$ stand
for the subgraph of $G$ consisting of the edges of $G$ contained in the closure of $F$.  We let $\ell(F)$ be the sum of
the lengths of the boundary walks of $F$ in $G$.

A graph $G$ is {\em $k$-critical} if $G$ is not $(k-1)$-colorable, but every proper subgraph
of $G$ is $(k-1)$-colorable.
A well-known result of Gr\"otzsch~\cite{grotzsch} states that all triangle-free planar graphs are $3$-colorable,
i.e., there are no planar triangle-free $4$-critical graphs.
Since the cycles of length $4$ can be easily eliminated, the main part of the proof of Gr\"otzsch's theorem
concerns graphs of girth $5$.  Generalizing this result,  Thomassen~\cite{thom-surf} proved that there exists a function $f$
such that every $4$-critical graph of girth $5$ and genus $g$ has at most $f(g)$ vertices (where $f$ is double-exponential in $g$),
and thus the number of such graphs is finite.  This was later improved by Dvo\v{r}\'ak et al.~\cite{proof-lincrit},
by showing that the number of vertices of such a graph is at most linear in $g$ (Theorem~\ref{thm-dktsim}).  Both the original result of Thomassen
and its improvement allow a bounded number of vertices to be precolored.  To state this generalization, we need to extend the
notion of a $4$-critical graph.

There are two natural ways one can define a critical graph with precolored vertices.
Consider a graph $G$ and a subgraph (not necessarily induced) $S\subseteq G$.
We call $G$ {\em strongly $S$-critical} if there exists a coloring of $S$ that does not extend to a coloring
of $G$, but extends to a coloring of every proper subgraph $G'\subset G$ such that $S\subseteq G'$.
We say that $G$ is {\em $S$-critical} if for every proper subgraph $G'\subset G$ such that $S\subseteq G'$,
there exists a coloring of $S$ that does not extend to a coloring of $G$, but extends to a coloring of $G'$.
We call a (strongly) $S$-critical graph $G$ {\em nontrivial} if $G\neq S$.
Note that every strongly $S$-critical graph is also $S$-critical, but the
converse is false (for example, if $G$ is a cycle $S$ with two chords, then
$G$ is $S$-critical, but not strongly $S$-critical).  Also, $G$ is $\emptyset$-critical (or
strongly $\emptyset$-critical) if and only if $G$ is $4$-critical.

Dvo\v{r}\'ak et al.~\cite{proof-lincrit} bounded the size of critical graphs as follows:
\begin{theorem}[Dvo\v{r}\'ak et al.~\cite{proof-lincrit}]\label{thm-dkt}
Let $K=\oldK$.  Let $G$ be a graph
embedded in a surface $\Sigma$ of genus $g$ and let $\{F_1, F_2, \ldots, F_k\}$
be a set of faces of $G$ such that the open region corresponding to $F_i$ is
homeomorphic to the open disk for $1\le i\le k$.  If $G$ is $(F_1\cup
F_2\cup\ldots\cup F_k)$-critical and every cycle of length of at most $4$ in $G$ is
equal to $F_i$ for some $1\le i\le k$, then
$$|V(G)|\le \ell(F_1)+\ldots+\ell(F_k)+K(g+k).$$
\end{theorem}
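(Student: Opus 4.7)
The plan is a strong induction on $g(\Sigma)+k+|V(G)|$, combining topological surgery with a discharging argument and a sharp plane base case. First I would verify that in a minimum counterexample $G$, every non-precolored vertex has degree at least $3$ (since $G$ is $(F_1\cup\cdots\cup F_k)$-critical) and that $G$ contains no short non-contractible cycle and no short contractible cycle separating the $F_i$'s from one another. Each such cycle would let me cut $\Sigma$ along it: if non-contractible, the Euler genus drops by at least one and the two new faces become precolored; if contractible, the surface splits into two smaller pieces to which the inductive hypothesis applies. Reassembling the bounds across the pieces absorbs the length of the cut into the $K(g+k)$ term and yields the claimed inequality.

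Assuming no such short separating cycle exists, I would assign initial charge $d(v)-4$ to every vertex $v$ and $\ell(F)-4$ to every face $F$, so that by Euler's formula
\[ \sum_v(d(v)-4)+\sum_F(\ell(F)-4)=4g(\Sigma)-8. \]
Transferring a suitable amount of charge from every face of length at least $5$ to each incident degree-$3$ vertex, the absence of short separating cycles should force every face and every non-precolored vertex to end with nonnegative final charge outside a bounded-radius neighborhood of some $F_i$. The total charge is invariant at $4g(\Sigma)-8$, so all the negative charge must be concentrated in these bounded neighborhoods; counting there gives $|V(G)|\le\sum_i\ell(F_i)+O(g(\Sigma)+k)$, which is the target bound.

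The heart of the argument, and the source of the large constant $K$, is the base case where $\Sigma$ is the sphere and $k\le 2$: one must certify that in a plane $4$-critical graph of girth $5$ with two precolored short faces $C_1$ and $C_2$ at sufficiently large distance, the precoloring always extends, so any counterexample has $C_1$ and $C_2$ at bounded distance and hence bounded size. This is essentially the statement Theorem~\ref{thm-dist} refines: \cite{proof-lincrit} uses a distance threshold of $1{,}500{,}000$ yielding $K<\oldK$, while the present paper pushes the threshold down to $4$ and accordingly to $K<\newK$.

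The main obstacle I expect is exactly this plane base case: one needs to enumerate, or cleverly rule out, all possible reducible configurations in the plane annulus between two precolored short cycles, and the combinatorial explosion of such configurations grows rapidly with the allowed distance. This is the step addressed in the present paper by a computer-assisted enumeration of all plane $4$-critical graphs of girth $5$ with a precolored cycle of length at most $16$, which supplies the list of reducible configurations needed to pin the distance threshold at $4$ and thereby to bring $K$ from $\oldK$ down to $\newK$.
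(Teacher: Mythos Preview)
The paper does not prove Theorem~\ref{thm-dkt} at all: it is quoted verbatim from Dvo\v{r}\'ak, Kr\'al' and Thomas~\cite{proof-lincrit} and used as a black box throughout (notably in Lemma~\ref{lemma-cylmain} and Theorem~\ref{thm-maingen}). There is therefore no ``paper's own proof'' against which to compare your proposal. What you have written is a plausible high-level sketch of the strategy of~\cite{proof-lincrit}---induction via surgery on short noncontractible or separating cycles, a discharging argument in the absence of such cycles, and a cylinder base case controlling the distance between two precolored short faces---but none of that argument appears in the present paper.

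Your final two paragraphs also conflate two distinct things. Theorem~\ref{thm-dkt} as stated is the original result with $K=\oldK$; its proof and its constant come entirely from~\cite{proof-lincrit}. The contribution of the present paper is \emph{not} a new proof of Theorem~\ref{thm-dkt} but rather Theorem~\ref{thm-dist} (and ultimately Theorem~\ref{thm-cyl}), which replaces the distance-$1{,}500{,}000$ cylinder base case of~\cite{proof-lincrit} by distance~$4$. Feeding that improved base case back into the inductive machinery of~\cite{proof-lincrit} is what yields $K<\newK$, but the paper does not spell that derivation out either---it simply asserts the improvement in the introduction. So your last paragraph, about computer-assisted enumeration of $B$-critical graphs with $\ell(B)\le 16$, describes the content of Sections~\ref{sec-disk}--\ref{sec-cyl} of the present paper, not a proof of Theorem~\ref{thm-dkt}.
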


Let us note that such a claim does not hold without the restriction on the
cycles of length $4$, since Youngs~\cite{Youngs} gave a construction of an infinite
family of $4$-critical triangle-free graphs that can be embedded in any surface distinct from
the sphere.

Analogously, we will prove a generalization of Theorem~\ref{thm-main}
allowing a bounded number of precolored vertices (Theorem~\ref{thm-maingen}).
It is easy to reduce the proof to the case that $\Sigma$
is the sphere and exactly two cycles are precolored.  In this case, we say
that the graph is embedded in the {\em cylinder}, and we call the precolored
cycles the {\em boundaries} of the cylinder.  In fact, it suffices to consider
the case that both boundaries have length at most $4$.  By cutting along cycles
of length at most $4$, such a graph decomposes to a possibly large number of
graphs embedded in the cylinder such that the only cycles of length at most $4$
are the boundaries.  The main part of our proof is based on an enumeration of
such graphs:

\begin{theorem}\label{thm-cyl}
Let $G$ be a connected graph embedded on the cylinder with distinct boundaries $C_1$ and $C_2$ such that $\ell(C_1), \ell(C_2)\le 4$ and every cycle in $G$ distinct
from $C_1$ and $C_2$ has length at least $5$.  If $G$ is $(C_1\cup C_2)$-critical, then $G$ is isomorphic to one of the graphs drawn in
Figures~\ref{fig-44} and \ref{fig-34}.
\end{theorem}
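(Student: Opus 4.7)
The plan is to combine the distance bound of Theorem~\ref{thm-dist} with a computer-assisted enumeration. First, I would embed $G$ in the plane with $C_1$ bounding the outer face and $C_2$ bounding an inner face; this is possible because the cylinder is topologically an annulus and both $C_1$ and $C_2$ bound faces by hypothesis. In this planar picture every cycle distinct from $C_1$ and $C_2$ has length at least $5$, so Theorem~\ref{thm-dist} applies: if the distance between $C_1$ and $C_2$ in $G$ were at least $4$, then every precoloring of $C_1\cup C_2$ would extend to a $3$-coloring of $G$, contradicting the existence of a proper subgraph $G'$ with $C_1\cup C_2\subseteq G'\subsetneq G$ witnessing $(C_1\cup C_2)$-criticality. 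Therefore the distance between $C_1$ and $C_2$ in $G$ is at most $3$.

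Second, I would fix a shortest path $P$ of length $d\le 3$ from a vertex of $C_1$ to a vertex of $C_2$ and cut the cylinder along $P$. The result is a plane graph $G^\ast$ whose outer boundary is a closed walk $W$ obtained by concatenating $C_1$, one copy of $P$, $C_2$ traversed with the reverse orientation, and the reverse of the other copy of $P$. Its length satisfies $|W|=\ell(C_1)+\ell(C_2)+2d\le 4+4+6=14\le 16$. Every cycle of $G^\ast$ not contained in $W$ inherits length at least $5$ from $G$. Moreover, $3$-colorings of $C_1\cup C_2$ in $G$ correspond bijectively to $3$-colorings of $W$ in which the two copies of $P$ receive matching colors, and this correspondence preserves extendability to the interior. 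Thus $(G^\ast,W)$ satisfies an analogous criticality condition with a single precolored boundary walk of length at most $16$.

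Third, the computer enumerates all plane graphs $H$ with a precolored outer boundary walk of length at most $16$, all other cycles of length at least $5$, and the corresponding criticality property. Theorem~\ref{thm-dkt} applied with $g=0$ and $k=1$ bounds $|V(H)|$ by an absolute constant, so the search is finite. For each candidate $H$ the algorithm reverses the cut, identifying the two copies of $P$ inside $W$ in all admissible ways, and retains those whose images are cylindrical $(C_1\cup C_2)$-critical graphs of the required form. The surviving graphs are then matched, up to isomorphism, against Figures~\ref{fig-44} and~\ref{fig-34}.

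The chief obstacle is the enumeration itself. Making it provably exhaustive requires a careful search strategy: building the graph from the outer walk inward, checking girth and criticality during the construction to prune heavily, correctly handling non-simple walks arising when $P$ has length $0$ or when $C_1$, $C_2$ and $P$ share vertices, and enumerating modulo isomorphism to keep the combinatorics under control. These algorithmic aspects, together with a precise specification of how the cuts are to be reversed to produce cylindrical embeddings, constitute the technical core of the proof, while the conceptual reduction above is short.
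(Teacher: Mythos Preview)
Your argument is circular. You invoke Theorem~\ref{thm-dist} to bound the distance between $C_1$ and $C_2$, but in this paper Theorem~\ref{thm-dist} is a \emph{corollary} of Theorem~\ref{thm-cyl}: once the list in Figures~\ref{fig-44} and~\ref{fig-34} is established, one simply observes that every graph on the list has the two boundary cycles at distance at most three. There is no independent proof of Theorem~\ref{thm-dist} available to you at this point, so you cannot use it to prove Theorem~\ref{thm-cyl}.

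What the paper actually does is split into two cases. When the distance between $C_1$ and $C_2$ is at most $4$ (Lemma~\ref{lemma-cyl-le4}), the argument is essentially your steps~2 and~3: cut along a shortest path, obtain a $B$-critical plane graph of girth~$\ge 5$ with $\ell(B)\le 16$, and match against the computer-generated sets $\KK_5,\ldots,\KK_{16}$. The substantial part you are missing is the case of distance at least~$5$ (Lemma~\ref{lemma-cyl-ge5}), which the paper handles by a minimal-counterexample and discharging argument: one shows $2$-connectivity, forbids adjacent degree-$2$ vertices, excludes several small reducible configurations (\ref{cl-no53})--(\ref{cl-no51}) by reductions that appeal to the explicit lists produced in Lemmas~\ref{lemma-join2}--\ref{lemma-join} and Corollary~\ref{cor-cyl-special}, and finally reaches a contradiction via Euler's formula. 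Without an independent distance bound, this structural work (or something equivalent) is unavoidable; your proposal provides no substitute for it.
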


It is straightforward to check that the distance between the boundaries in the described critical graphs is at most three;
hence, Theorem~\ref{thm-cyl} implies Theorem~\ref{thm-dist}.

\begin{figure}
\begin{center}
\newcommand{\sze}{35mm}
\begin{tabular}{cccc}
\includegraphics[width=\sze]{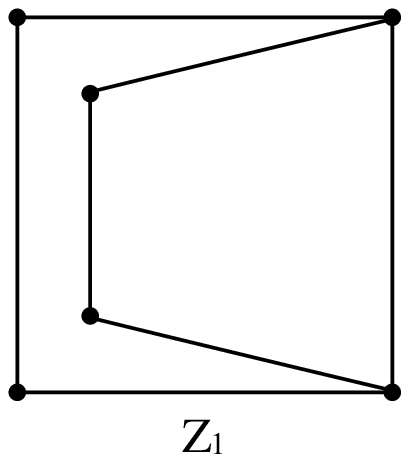}&
\includegraphics[width=\sze]{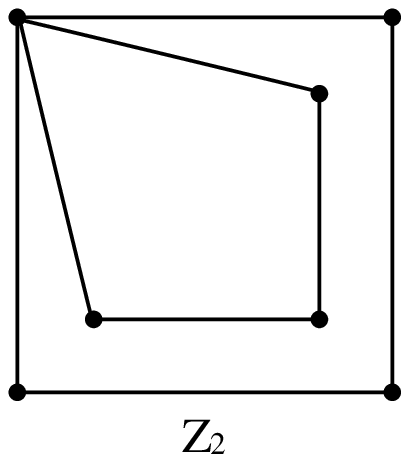}&
\includegraphics[width=\sze]{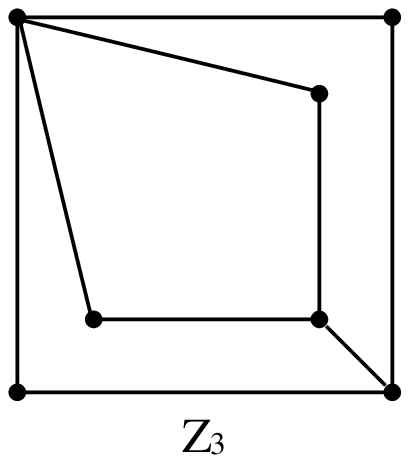}&
\includegraphics[width=\sze]{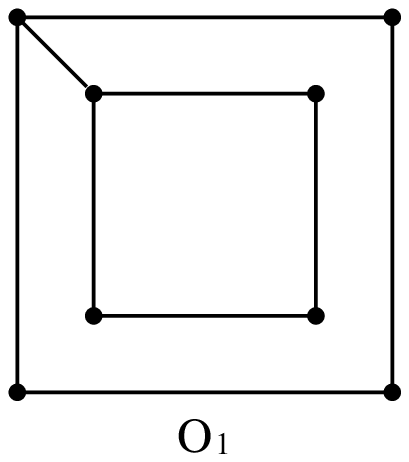}\\
\includegraphics[width=\sze]{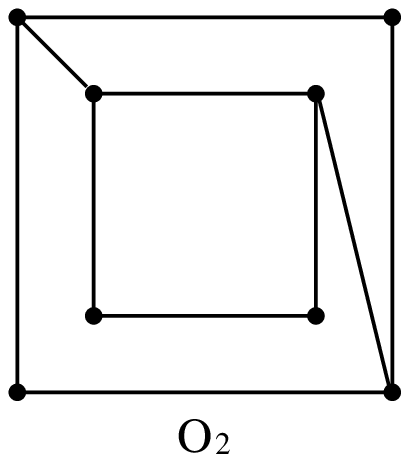}&
\includegraphics[width=\sze]{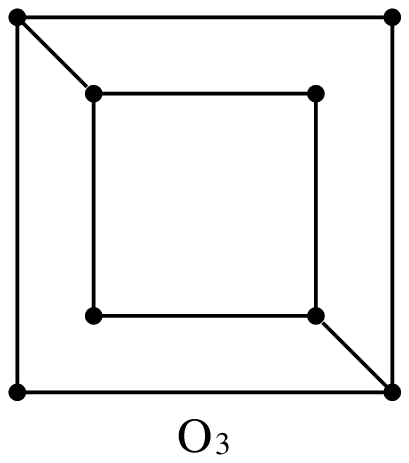}&
\includegraphics[width=\sze]{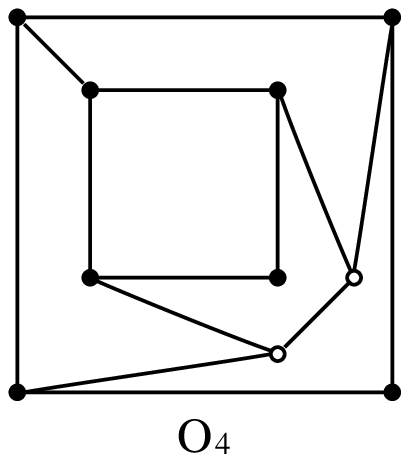}&
\includegraphics[width=\sze]{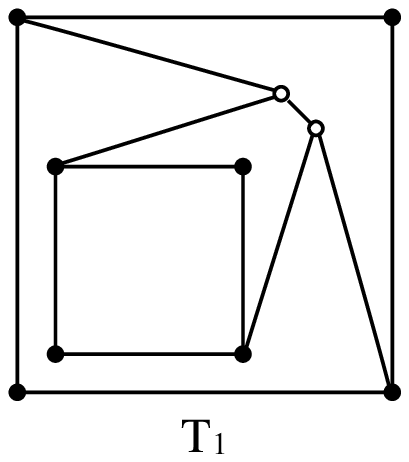}\\
\includegraphics[width=\sze]{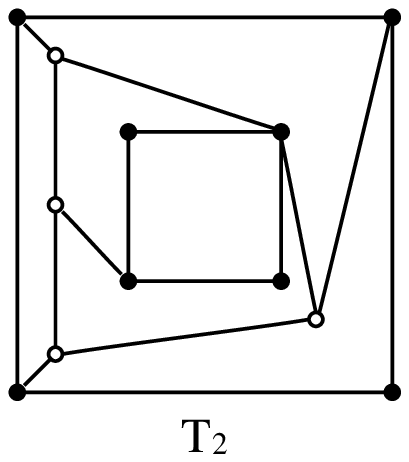}&
\includegraphics[width=\sze]{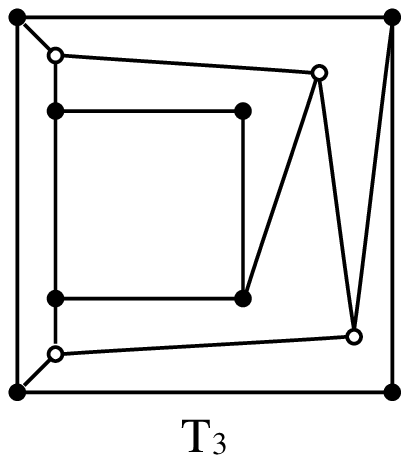}&
\includegraphics[width=\sze]{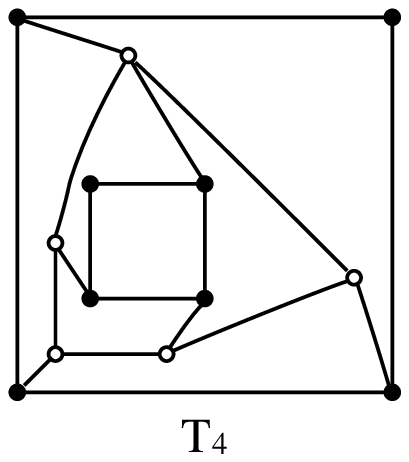}&
\includegraphics[width=\sze]{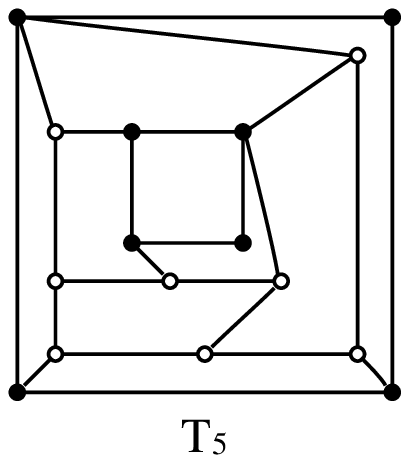}\\
\includegraphics[width=\sze]{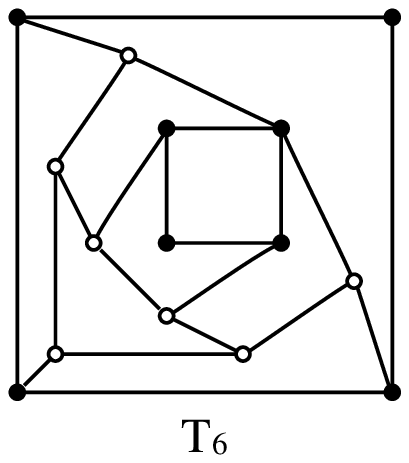}&
\includegraphics[width=\sze]{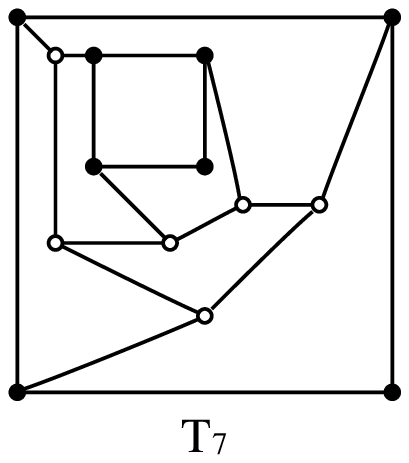}&
\includegraphics[width=\sze]{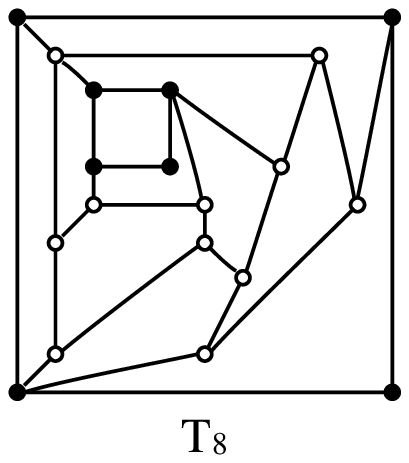}&
\includegraphics[width=\sze]{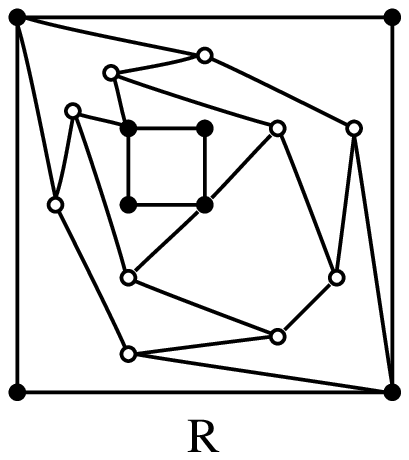}
\end{tabular}
\end{center}
\caption{Critical graphs on the cylinder, bounded by $4$-cycles.}\label{fig-44}
\end{figure}

\begin{figure}
\begin{center}
\newcommand{\sze}{35mm}
\begin{tabular}{ccc}
\includegraphics[width=\sze]{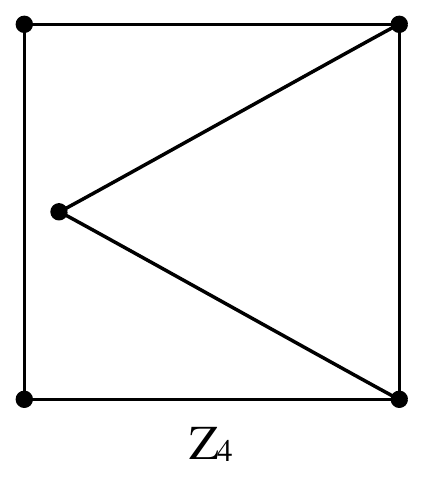}&
\includegraphics[width=\sze]{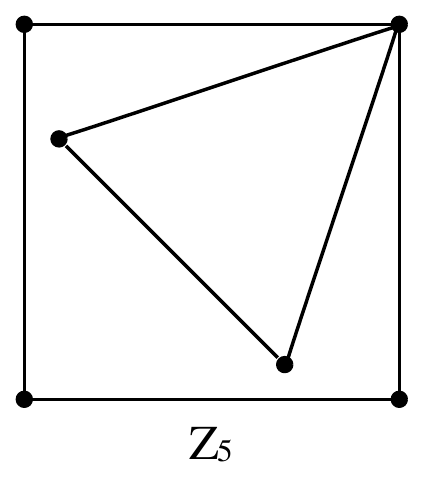}&
\includegraphics[width=\sze]{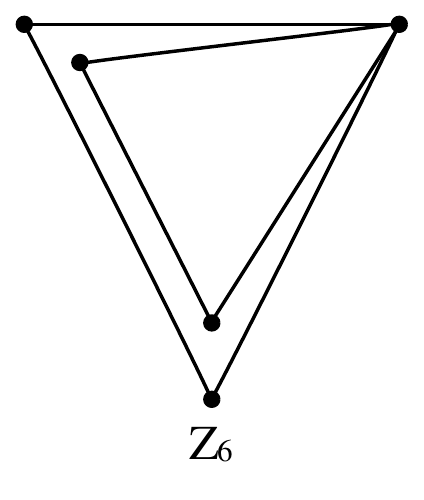}\\
\includegraphics[width=\sze]{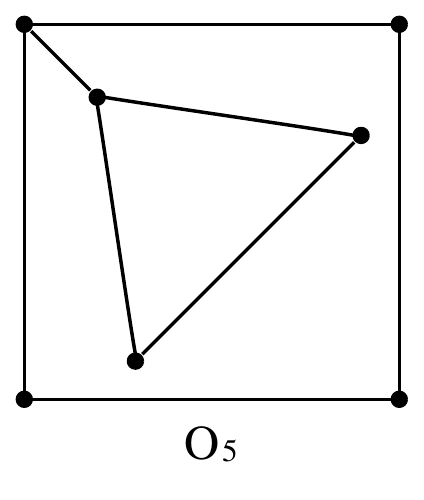}&
\includegraphics[width=\sze]{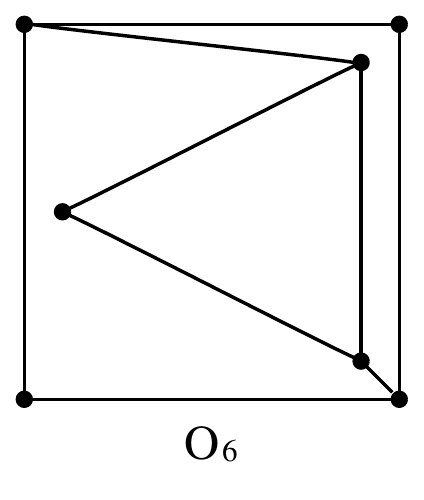}&
\includegraphics[width=\sze]{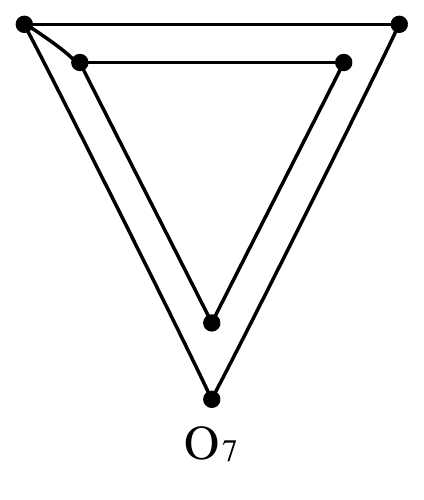}
\end{tabular}
\end{center}
\caption{Critical graphs on the cylinder, with precolored triangle.}\label{fig-34}
\end{figure}

Theorem~\ref{thm-cyl} is proved by the method of reducible configurations:
Considering a graph $G$ on the cylinder in that the distance between the
boundaries $C_1$ and $C_2$ is at least $5$, we find a {\em reducible
configuration}---a subgraph that enables us to transform $G$ to a smaller graph
$H$ that is nontrivial $(C_1\cup C_2)$-critical if and only if $G$ is
nontrivial $(C_1\cup C_2)$-critical.  If $H$ does not contain cycles of length
at most $4$ distinct from $C_1$ and $C_2$, then we argue that $H$ is not one of
the graphs enumerated in Theorem~\ref{thm-cyl}, thus showing that $G$ is not
$(C_1\cup C_2)$-critical.  Otherwise, we cut $H$ along the cycles of length at
most $4$, use Theorem~\ref{thm-cyl} to describe the resulting pieces, and
conclude that every precoloring of $C_1$ and $C_2$ extends to a coloring of
$H$, again implying that $H$ (and thus also $G$) is not $(C_1\cup C_2)$-critical.

This leaves us with the case that the distance between $C_1$ and $C_2$ in $G$
is at most $4$.  In that case, we color the shortest path between $C_1$ and
$C_2$ and cut the graph along it, obtaining a graph of girth $5$ with a
precolored face of length at most $16$.  Such critical graphs with a precolored
face of length at most $11$ were enumerated by Walls~\cite{walls-enum} and
independently by Thomassen~\cite{thom-surf}, who also gives some necessary
conditions for graphs with a precolored face of length $12$.  The exact
enumeration of graphs with a precolored face of length $12$ appears in
Dvo\v{r}\'ak and Kawarabayashi~\cite{dvkaw}.  These results can be summarized
as follows.  

Given a plane graph with the outer face $B$, a {\em chord} of $B$ is an edge in $E(G)\setminus E(B)$
incident with two vertices of $B$.  A {\em $t$-chord} of $B$ is a path $Q=q_0q_1\ldots q_t$ of length $t$ ($t\ge 2$) such
that $q_0\neq q_t$ and $V(Q)\cap V(B)=\{q_0,q_t\}$.  Sometimes, we refer to a chord as a {\em $1$-chord}.
A {\em shortcut} is a $t$-chord of $B$ such that $t$ is smaller than the distance between $u$ and $v$ in $B$.

\begin{theorem}[Dvo\v{r}\'ak and Kawarabayashi~\cite{dvkaw}]\label{thm-crit12}
Let $G$ be a plane graph of girth at least $5$ and $B$ the outer face of $G$ of length
at most $12$.  If $B$ is a cycle, $G$ contains no shortcut of length at most
two, no two vertices of $G$ of degree two are adjacent and $G$ is nontrivial $B$-critical,
then $G$ is isomorphic to one of the graphs in Figure~\ref{fig-12}.
\end{theorem}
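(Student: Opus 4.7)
I would argue by strong induction on $|V(G)|$; for $\ell(B)\le 11$ the statement already follows from the enumerations of Walls~\cite{walls-enum} and Thomassen~\cite{thom-surf}, so the genuinely new case is $\ell(B)=12$. Fix a hypothetical minimum counterexample $G$ with outer cycle $B$ of length $12$ satisfying all the listed hypotheses and $G\ne B$. From nontrivial $B$-criticality one deduces that every interior vertex has degree at least $3$ (lower-degree vertices can be removed without affecting extendability of any precoloring of $B$), while the shortcut hypothesis and girth $5$ together forbid any splitting of $G$ along a $1$- or $2$-chord into two smaller $B$-critical pieces, so the only cycles of length at most $4$ in $G$ are absent altogether.

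To force further structural information I would set up a discharging argument starting from the Euler identity $\sum_v(d(v)-4)+\sum_f(\ell(f)-4)=-8$. Interior $5$-faces carry charge $+1$, interior vertices of degree $3$ carry $-1$, boundary vertices of degree $2$ carry $-2$, and the outer face absorbs at most $\ell(B)-4=8$. With suitable rules, such as sending positive charge from $5$-faces to their incident degree-$3$ vertices and from the outer face to its degree-$2$ boundary vertices, the absence of a small reducible subconfiguration would force the final charge to be strictly positive, contradicting the identity. The reducible configurations I expect to appear include: a $3$-chord of $B$, an interior vertex of degree $3$ with two or more neighbours on $B$, and $5$-faces meeting $B$ in certain prescribed patterns.

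For each reducible configuration the reduction follows the Thomassen precoloring-extension template: one deletes or contracts the configuration to obtain a smaller plane graph $G'$ whose outer face $B'$ has length at most $12$, retains girth $5$, and still has no short shortcut; the inductive hypothesis matches $G'$ against the list in Figure~\ref{fig-12}, and then every lift of such a $G'$ to a candidate $G$ either appears on the list already or is shown not to be $B$-critical, contradicting minimality. The main obstacle is the combinatorial explosion when $\ell(B)=12$: the number of possible chord, $2$-chord and $3$-chord patterns on $B$, combined with the various local shapes of $5$-faces adjacent to $B$ and the possible positions of degree-$3$ interior vertices, makes a purely hand-written case analysis unwieldy. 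This is precisely where computer enumeration becomes indispensable; I would use a program to verify that beyond a short list of base cases every candidate graph contains one of the discharging-reducible configurations, and the surviving base cases are exactly the graphs displayed in Figure~\ref{fig-12}.
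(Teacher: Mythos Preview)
The theorem you are trying to prove is not proved in the present paper at all: it is quoted from Dvo\v{r}\'ak and Kawarabayashi~\cite{dvkaw} and used as a black box, so there is no ``paper's own proof'' to compare against. What the paper does give you, in the surrounding discussion, is a summary of the method of~\cite{dvkaw} via Theorems~\ref{thm-numvert}, \ref{thm-struct} and~\ref{thm-fsize}, and that summary differs from your plan in a concrete way. The engine in~\cite{dvkaw} is not a discharging argument aimed at forcing ad~hoc local configurations; it is the structural Theorem~\ref{thm-struct}, which says that \emph{every} $B$-critical plane graph of girth~$5$ contains one of four explicit configurations: a shortcut of length $\le 4$, two adjacent degree-$2$ vertices, a $4$-chord with a pendant vertex pattern, or a $4$-chord cutting off a short face flanked by two $5$-faces. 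Each of these supports a reduction to a critical graph with strictly shorter outer face, and the enumeration for $\ell(B)\le 12$ is obtained by running these reductions backwards. Your proposed list of reducible configurations (a $3$-chord, an interior degree-$3$ vertex with two boundary neighbours, $5$-faces meeting $B$ in unspecified ways) is different, and you give no argument that it is unavoidable; the discharging sketch you offer is too coarse to establish that.

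One further point worth correcting: you write that computer enumeration ``becomes indispensable'' at $\ell(B)=12$. The paper says the opposite. The $\ell(B)\le 12$ case was done by hand in~\cite{dvkaw}, indeed in the more general list-coloring setting; the computer-assisted portion of the present paper begins only at $\ell(B)=13$ (Theorem~\ref{thm-crit16} and the algorithm of Theorem~\ref{thm-alg}). So your assessment of where a hand proof becomes infeasible is off, and the proposal as written is a plausible-sounding outline rather than a proof: the key step --- showing your configurations are unavoidable and your reductions preserve the hypotheses --- is asserted but not carried out.
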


\begin{figure}
\begin{center}
\includegraphics{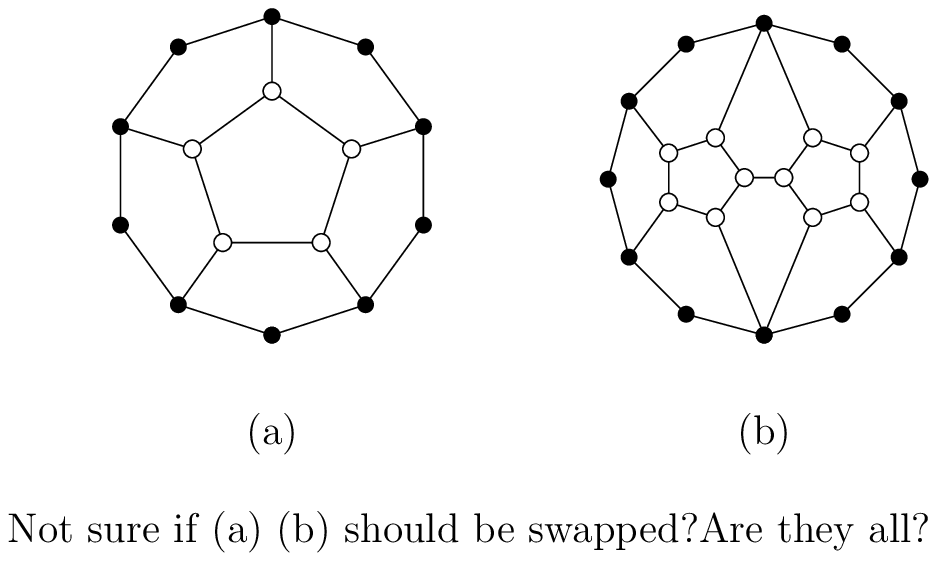}
\end{center}
\caption{Nontrivial critical graphs with precolored face of length at most
$12$.}\label{fig-12}
\end{figure}

\begin{figure}
\begin{center}
\includegraphics[width=9cm]{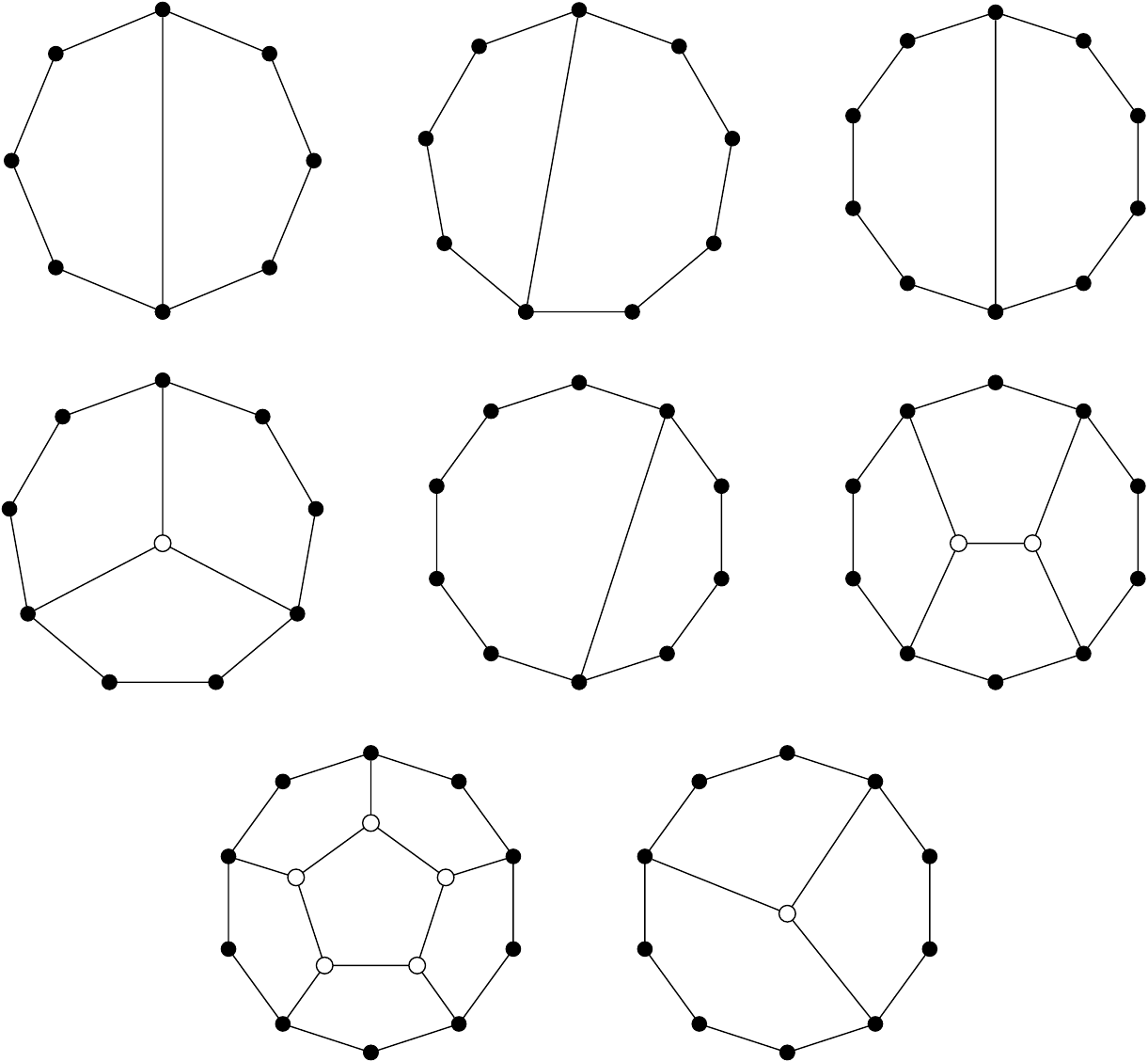}
\end{center}
\caption{Nontrivial critical graphs with precolored face of length at most $10$.}\label{fig-10}
\end{figure}

Let us note that all other critical graphs with the precolored face of length
at most $12$ can be constructed from the graphs in Figure~\ref{fig-12} and a
$5$-cycle
by a sequence of subdividing the edges of the outer face and gluing pairs of
graphs
along paths of length at most two in their outer faces.  For instance, all such
nontrivial critical graphs with $\ell(B)\le 10$ are drawn in Figure~\ref{fig-10}.

The number of critical graphs grows exponentially with the length of the
precolored face, and enumerating all the graphs becomes increasingly difficult.
We implemented an algorithm to generate such graphs based on the results of
Dvo\v{r}\'ak and Kawarabayashi~\cite{dvkaw}, and used the computer to enumerate
the graphs with the outer face of length at most $16$.  There are $108$ such
graphs with the precolored face of length $13$, $427$ for length $14$, $1746$
for length $15$ and $7969$ for length $16$, up to isomorphism (including the
case that $G=B$).  Even excluding the trivial cases that $G$ has a shortcut of
length at most $2$ or contains two adjacent vertices of degree two as in
Theorem~\ref{thm-crit12}, there still remain $8$ graphs with the precolored
face of length $14$ (there are none with a precolored face with length $13$),
$13$ with the length $15$ and $76$ with the length $16$, thus we do not include
their list in this paper.  Here, let us point out only the following claim,
which still makes it possible to enumerate all the graphs easily:

\begin{theorem}\label{thm-crit16}
Let $G$ be a plane graph of girth $5$ and $B$ the outer face of $G$ of length
at most $16$.  If $G$ has no shortcut of length at most $4$ and $G$ is
nontrivial $B$-critical, then $G$ is isomorphic to the graph in
Figure~\ref{fig-12}(a) or to the graphs in Figure~\ref{fig-16}.
\end{theorem}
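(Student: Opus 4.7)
The plan is to deduce Theorem~\ref{thm-crit16} from the computer-generated classification sketched immediately before its statement. I split along $\ell(B)$. For $\ell(B)\le 12$, the hypothesis of Theorem~\ref{thm-crit16} implies the hypothesis of Theorem~\ref{thm-crit12}: a shortcut of length $1$ or $2$ is in particular a shortcut of length at most $4$, and a pair of adjacent degree-$2$ vertices on $B$ together with their interior neighbors creates a $3$-chord or $4$-chord whose length is strictly less than the corresponding distance in $B$, hence a shortcut of length at most $4$. Therefore $G$ must appear in the list in Figure~\ref{fig-12}, and a direct inspection shows that only Figure~\ref{fig-12}(a) contains no shortcut of length at most $4$.

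For $13\le\ell(B)\le 16$ I would use the enumeration algorithm based on Dvo\v{r}\'ak and Kawarabayashi~\cite{dvkaw} that the authors have implemented to generate the complete lists of nontrivial $B$-critical girth-$5$ plane graphs at each of these lengths ($108$, $427$, $1746$ and $7969$ graphs, respectively). On each graph in the list I would run a simple post-processing filter: for every pair $u,v\in V(B)$ compute the length of a shortest $u$--$v$ path in $G$ whose internal vertices lie in $V(G)\setminus V(B)$, and discard $G$ if this length is at most $4$ and strictly less than the $B$-distance between $u$ and $v$. The content of the theorem is then that after filtering, the only remaining graphs are those of Figure~\ref{fig-16} (with none surviving at length $13$); this final step is a mechanical comparison.

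The main obstacle is vouching for the enumeration itself. The generator builds critical graphs by repeatedly extracting a reducible configuration and branching on the remaining structural possibilities dictated by~\cite{dvkaw}; its correctness depends on subtle structural lemmas and on careful handling of isomorphism, girth and $B$-criticality inside the recursion. I would validate the pipeline by (i)~reproducing Figure~\ref{fig-12} and Figure~\ref{fig-10} at lengths $\le 12$, where the answer is independently known from~\cite{walls-enum,thom-surf,dvkaw}, (ii)~certifying each output graph to be of girth $5$ and nontrivially $B$-critical by directly exhibiting a precoloring of $B$ that does not extend and checking that every precoloring of $B$ extends after the deletion of any single interior edge, and (iii)~cross-checking isomorphism counts against an independent canonical-form computation. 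Once the enumeration is trusted, Theorem~\ref{thm-crit16} is immediate from inspection of the filter output.
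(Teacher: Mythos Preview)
Your overall strategy---generate the sets $\KK_i$ for $5\le i\le 16$ via the algorithm of Theorem~\ref{thm-alg} and then filter for the absence of shortcuts of length at most $4$---is exactly what the paper does, including the validation against the independently known lists for $\ell(B)\le 12$.

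There is, however, a genuine gap in your treatment of the case $\ell(B)\le 12$. You claim that ``a pair of adjacent degree-$2$ vertices on $B$ together with their interior neighbors creates a $3$-chord or $4$-chord''. But a degree-$2$ vertex of a $B$-critical graph lies on the cycle $B$, and both of its neighbours are the two vertices adjacent to it \emph{on} $B$; it has no interior neighbour at all. So your derivation of the hypothesis ``no two adjacent vertices of degree two'' in Theorem~\ref{thm-crit12} from ``no shortcut of length at most $4$'' does not go through as written. Without this, you cannot restrict attention to the graphs of Figure~\ref{fig-12}: the remaining $B$-critical graphs with $\ell(B)\le 12$ (those built from Figure~\ref{fig-12} and a $5$-cycle by subdividing outer-face edges and gluing along short paths, as described right after Theorem~\ref{thm-crit12}) must also be checked, and some of these do contain adjacent degree-$2$ vertices.

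The fix is simply to do what the paper does: run the same shortcut filter over all of $\KK_5\cup\ldots\cup\KK_{12}$, not just over Figure~\ref{fig-12}. Since you are already generating and filtering $\KK_{13},\ldots,\KK_{16}$, this is no extra work, and the paper explicitly states that this inspection (also computer-assisted) yields only Figure~\ref{fig-12}(a) in that range.
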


\begin{figure}
\begin{center}
\includegraphics[width=9cm]{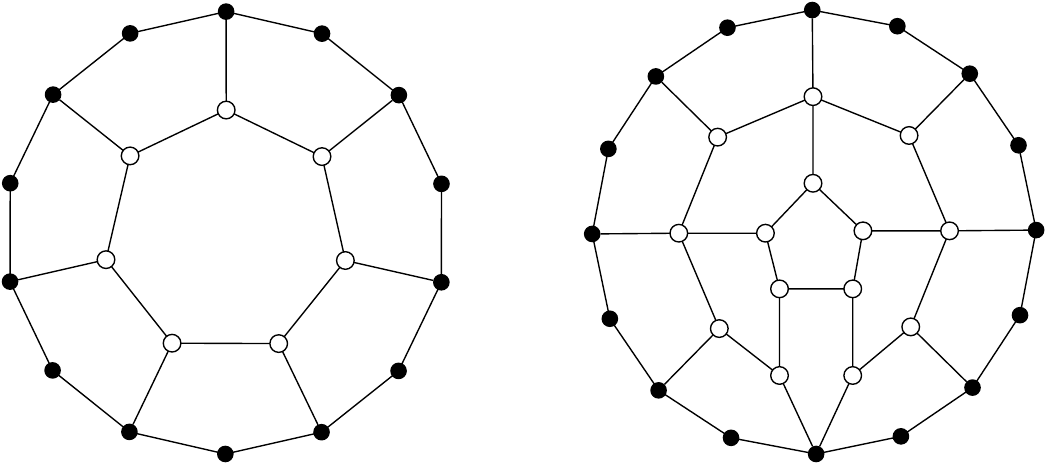}
\end{center}
\caption{Nontrivial critical graphs with precolored faces of length $14$ and $16$, respectively.}\label{fig-16}
\end{figure}

The complete list of the graphs, as well as programs used to generate them can
be found at \oururl. A description of the
programs can be found in Section~\ref{sec-prog}.

In Section~\ref{sec-disk} we give a proof of Theorem~\ref{thm-crit16}. 
Section~\ref{sec-cyl} is devoted to Theorem~\ref{thm-cyl}. Finally,
Section~\ref{sec-main} contains a proof of Theorem~\ref{thm-main}.

\section{Properties of the critical graphs}

Let $G$ be a $T$-critical graph, for some $T\subseteq G$.  For $S\subseteq G$, a
graph $G'\subseteq G$ is an {\em $S$-component} of $G$ if $S\subseteq G'$, $T\cap G'\subseteq S$ and all edges of
$G$ incident with vertices of $V(G')\setminus V(S)$ belong to $G'$. When we use $S$-components, $T$ will always be clear from the context. For example, if $G$ is a plane graph with $T$
contained in the boundary of its outer face and $S$ is a cycle in $G$, then the subgraph of $G$ consisting of the vertices and edges
drawn in the closed disk bounded by $S$ is an $S$-component of $G$.

\begin{lemma}\label{lemma-crs}
Let $G$ be a $T$-critical graph.  If $G'$ is an $S$-component of $G$, for some $S\subseteq G$,
then $G'$ is $S$-critical.
\end{lemma}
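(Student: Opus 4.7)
The plan is to reduce $S$-criticality of $G'$ directly to $T$-criticality of $G$ by a cut-and-paste argument. Fix any proper subgraph $G''\subsetneq G'$ with $S\subseteq G''$; I need to produce a $3$-coloring of $S$ that extends to $G''$ but not to $G'$.

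First I would build the ``replacement graph'' $G^*$ obtained from $G$ by substituting $G''$ for $G'$: take $V(G^*)=(V(G)\setminus V(G'))\cup V(G'')$ and $E(G^*)=(E(G)\setminus E(G'))\cup E(G'')$. The $S$-component hypothesis is what makes $G^*$ well-defined as a subgraph of $G$: every edge of $G$ incident with a vertex of $V(G')\setminus V(S)\supseteq V(G')\setminus V(G'')$ lies in $E(G')$, so deleting the vertices of $V(G')\setminus V(G'')$ removes only edges of $G'$, and those are exactly the edges we put back (or dropped) via $G''$. The inclusion $T\subseteq G^*$ follows from $T\cap G'\subseteq S\subseteq G''$, and $G^*\subsetneq G$ follows from $G''\subsetneq G'$ together with $V(G^*)\cap V(G')=V(G'')$ and $E(G^*)\cap E(G')=E(G'')$.

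Next, apply $T$-criticality of $G$ to the proper subgraph $G^*$: this yields a $3$-coloring $\varphi$ of $T$ and an extension $\psi$ of $\varphi$ to $G^*$ such that $\varphi$ has no extension to $G$. Set $\varphi_S:=\psi|_{V(S)}$. Since $G''\subseteq G^*$, the restriction $\psi|_{V(G'')}$ is a proper $3$-coloring of $G''$ extending $\varphi_S$, so $\varphi_S$ does extend to $G''$.

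The remaining—and main—step is to show $\varphi_S$ does not extend to $G'$. Suppose for contradiction that some proper $3$-coloring $\chi$ of $G'$ agrees with $\varphi_S$ on $V(S)$. Define $\tau$ on $V(G)$ by $\tau=\chi$ on $V(G')$ and $\tau=\psi$ on $V(G)\setminus V(G')$. To see $\tau$ is proper, consider any edge $e=uv\in E(G)$: if $e\in E(G')$, properness is immediate from $\chi$; if $e\notin E(G')$, the $S$-component property forces $u,v\in(V(G)\setminus V(G'))\cup V(S)$, and on both of these sets $\tau$ coincides with $\psi$ (using $\chi|_{V(S)}=\varphi_S=\psi|_{V(S)}$), so properness follows from $\psi$ being proper on $e\in E(G^*)$. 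One checks similarly that $\tau$ extends $\varphi$ on $V(T)$, contradicting the choice of $\varphi$. Hence $\varphi_S$ witnesses $S$-criticality of $G'$ at the subgraph $G''$, and since $G''$ was arbitrary the lemma follows. The only delicate point is verifying that the gluing produces a proper coloring; this is exactly where the ``no edges escape $V(G')\setminus V(S)$'' clause of the $S$-component definition is used.
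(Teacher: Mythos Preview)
Your proof is correct and follows essentially the same cut-and-paste strategy as the paper: use $T$-criticality of $G$ on a proper subgraph containing $T$, then glue the resulting coloring outside $G'$ with a hypothetical extension inside $G'$ to contradict the choice of $\varphi$. The only cosmetic difference is that the paper argues by contradiction and reduces to removing a single edge $e\in E(G')\setminus E(S)$ (after noting that isolated vertices of $G'$ lie in $S$), whereas you work directly with an arbitrary proper subgraph $G''\subsetneq G'$; the gluing step is identical.
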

\begin{proof}
Since $G$ is $T$-critical, every isolated vertex of $G$ belongs to $T$, and thus every isolated vertex of $G'$ belongs to $S$.
Suppose for a contradiction that $G'$ is not $S$-critical.  Then, there exists an edge $e\in E(G')\setminus E(S)$ such that
every coloring of $S$ that extends to $G'-e$ also extends to $G'$.  Note that $e\not\in E(T)$.
Since $G$ is $T$-critical, there exists a coloring $\psi$ of $T$ that extends to a coloring $\varphi$ of $G-e$, but does not
extend to a coloring of $G$.  However, by the choice of $e$, the restriction of $\varphi$ to $S$ extends to a coloring
$\varphi'$ of $G'$.  Let $\varphi''$ be the coloring that matches $\varphi'$ on $V(G')$ and $\varphi$ on $V(G)\setminus V(G')$.
Observe that $\varphi''$ is a coloring of $G$ extending $\psi$, which is a contradiction.
\end{proof}

Let us remark that Lemma~\ref{lemma-crs} would not hold if we replaced ``critical'' with ``strongly critical'', see Figure~\ref{fig-strongcritical} for an example. This is the main
reason why we (unlike some previous works in the area, e.g. Thomassen~\cite{thom-surf}) consider critical rather than strongly
critical graphs.  However, since every strongly critical graph is also critical, all the characterizations and enumerations that
we provide for critical graphs apply to strongly critical graphs as well.

\begin{figure}
\begin{center}
\includegraphics{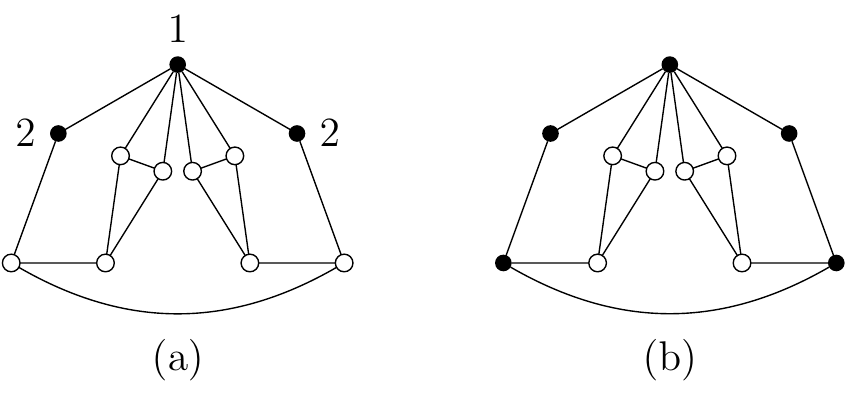}
\end{center}
\caption{(a) A strongly critical graph, with a precolored path on three vertices; (b) not a strongly critical graph with a precolored 5-cycle. }\label{fig-strongcritical}
\end{figure}

Lemma~\ref{lemma-crs} in conjunction with Theorem~\ref{thm-crit12}
describes the subgraphs drawn inside cycles in plane critical graphs.  Let us state a few useful special cases of this
claim explicitly:

\begin{corollary}\label{cor-cycles}
Let $G$ be a plane graph and $T$ a subgraph of $G$ such that $G$ is $T$-critical.  Suppose that every cycle
in $G$ that is not contained in $T$ has length at least $5$.  Let $C$ be a cycle in $G$
and $H$ the subgraph of $G$ drawn in the closed disk bounded by $C$.  Suppose that $H\cap T\subseteq C$.
If $H\neq C$, then $\ell(C)\ge 8$.  If $|V(H)\setminus V(C)|\ge 1$, then $\ell(C)\ge 9$.  Finally,
if $|V(H)\setminus V(C)|\ge 2$, then $\ell(C)\ge 10$.
\end{corollary}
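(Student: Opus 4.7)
The plan is to reduce to a nontrivial $C$-critical plane graph via Lemma~\ref{lemma-crs} and then combine classical precoloring-extension results for small faces with the finite enumeration in Figure~\ref{fig-10} for $\ell(C)\in\{8,9\}$.

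First I would verify that $H$ is a $C$-component of $G$: $C\subseteq H$ is immediate, $H\cap T\subseteq C$ is one of the hypotheses, and because $H$ consists of everything drawn in the closed disk bounded by $C$, every edge of $G$ incident with a vertex of $V(H)\setminus V(C)$ already lies in $H$. By Lemma~\ref{lemma-crs}, $H$ is $C$-critical, with $C$ bounding its outer face. Since $H\cap T\subseteq C$, any cycle in $H$ distinct from $C$ is a cycle of $G$ not contained in $T$, and so has length at least~$5$.

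For the first bound, suppose $H\neq C$. If $\ell(C)\le 4$, then $C\subseteq T$ (as every $(\le\!4)$-cycle of $G$ is in $T$) and $H$ has at most one cycle of length less than $5$, namely $C$ itself. Classical precoloring-extension theorems (Gr\"otzsch's theorem in the case $\ell(C)=3$ and Aksionov's strengthening in the case $\ell(C)=4$) then imply that every $3$-coloring of $C$ extends to $H$, contradicting nontrivial $C$-criticality. If $5\le\ell(C)\le 7$, then $H$ has girth at least $5$ and Thomassen's precoloring-extension theorem for girth-$5$ planar graphs with a small precolored face (see~\cite{thom-surf}) again forces every precoloring of $C$ to extend, a contradiction. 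Hence $\ell(C)\ge 8$.

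For the remaining two bounds I would appeal to the complete list of nontrivial $C$-critical plane graphs of girth at least $5$ with $\ell(C)\le 10$ displayed in Figure~\ref{fig-10}, which the excerpt derives from Theorem~\ref{thm-crit12} together with the subdivide/glue operations described after that theorem. By inspection, every graph on the list with $\ell(C)=8$ satisfies $V(H)=V(C)$, and every graph with $\ell(C)=9$ has at most one vertex outside $C$. Combined with the first bound, this yields $\ell(C)\ge 9$ whenever $|V(H)\setminus V(C)|\ge 1$ and $\ell(C)\ge 10$ whenever $|V(H)\setminus V(C)|\ge 2$. The main obstacle is precisely this reliance on Figure~\ref{fig-10}: the structural setup via Lemma~\ref{lemma-crs} is clean, but the last two inequalities genuinely require the finite case check against the known enumeration rather than a direct short argument.
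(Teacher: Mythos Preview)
Your argument is correct and follows the paper's route: apply Lemma~\ref{lemma-crs} to see that $H$ is a $C$-critical plane graph of girth at least~$5$ (apart from $C$ itself), then read off the three bounds from the known enumeration of such graphs (Theorem~\ref{thm-crit12} and the list in Figure~\ref{fig-10}). One small correction: for $\ell(C)=3$, Gr\"otzsch's theorem does not directly apply, since $H$ contains the triangle $C$; both cases $\ell(C)\in\{3,4\}$ are covered by Aksenov's result (Theorem~\ref{thm-aksen} in the paper), or one can simply absorb all of $\ell(C)\le 7$ into the enumeration, as the paper implicitly does.
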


\section{Graphs with one precolored face}\label{sec-disk}

In this section we describe an algorithm for enumerating all $B$-critical graphs
of girth 5 with outer face $B$. First, we describe a previously know recursive description.
Then we show that it can be turned into an algorithm for enumerating $B$-critical graphs.
We implemented the resulting algorithm and we provide its source code.

Dvo\v{r}\'ak and Kawarabayashi~\cite{dvkaw} proved the following claim (in a more general setting of list-coloring):
\begin{theorem}[Dvo\v{r}\'ak and Kawarabayashi~\cite{dvkaw}]\label{thm-numvert}
Let $G$ be a plane graph of girth at least $5$ with the outer face $B$ bounded by a cycle of length at least $10$.
If $G$ is $B$-critical, then $|E(G)|\le 18\ell(B)-160$ and $|V(G)|\le \frac{37\ell(B)-320}{3}$.
\end{theorem}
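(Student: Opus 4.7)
The plan is to prove Theorem~\ref{thm-numvert} by a discharging argument built on Euler's formula and the girth-$5$ hypothesis, with the role of $B$-criticality being to rule out the local configurations that would otherwise make the bounds fail. To set up the initial charges, I would give each vertex $v$ charge $\deg(v)-4$ and each face $f\neq B$ charge $\ell(f)-4$, while assigning $B$ the ``collector'' charge $\ell(B)+4$; by Euler's formula the total charge is a small absolute constant (the $-8$ coming from $|V|-|E|+|F|=2$). The bound $|E(G)|\le 18\ell(B)-160$ then takes the equivalent form of an inequality between $\ell(B)$ and $|E|$ that can be extracted once discharging produces non-negative final charge everywhere except at $B$.

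The next step is to extract structural consequences of $B$-criticality by constructing reducible configurations. Concretely, I would argue that none of the following can occur: (i) an isolated vertex or a vertex of degree at most $2$ outside $B$; (ii) an edge $e\not\in E(B)$ whose removal still forces any precoloring of $B$ to extend, violating minimality; (iii) short separating cycles producing a disk subgraph that could be replaced by a smaller critical gadget via Corollary~\ref{cor-cycles}; (iv) $5$-faces all of whose vertices outside $B$ have degree $3$, since identifying or contracting suitable vertices on such a face yields a smaller graph whose precoloring extensions lift back to $G$. Each item is established by exhibiting, in a hypothetical offending $G$, a proper subgraph $G'\supseteq B$ such that every precoloring of $B$ extending to $G'$ extends to $G$, contradicting $B$-criticality.

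With those forbidden configurations in hand, I would then choose discharging rules that move the surplus charge of long faces onto $3$-vertices and $4$-vertices (the likely ``defect'' vertices), with boundary-adjacent objects additionally donating their slack to $B$. The accounting should give final charge $\ge 0$ at every vertex and every interior face, so the entire negative initial charge must be absorbed at $B$; rearranging this inequality gives $|E(G)|\le 18\ell(B)-160$. The vertex bound $|V(G)|\le \tfrac{37\ell(B)-320}{3}$ then follows mechanically: combining $2|E|\ge \ell(B)+5(|F|-1)$ with Euler's formula yields $|V|\le \tfrac{3|E|-\ell(B)+5}{3}+\text{const}$, into which the edge bound is substituted.

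The hard part will be the bookkeeping of the reducible configurations together with the tight discharging near $B$. The constants $18$ and $160$ are sharp enough that every boundary interaction must be accounted for carefully: a $3$-vertex incident to $B$ behaves differently from one deep inside $G$, and short separating cycles can produce disk subgraphs whose analysis depends on Corollary~\ref{cor-cycles}. The hypothesis $\ell(B)\ge 10$ is crucial, since for shorter $B$ the explicit enumerations in Theorem~\ref{thm-crit12} take over and the linear bound would have to be verified directly; for $\ell(B)\ge 10$ the discharging has enough slack to push all defects to the boundary. I would expect the proof in~\cite{dvkaw} to consist mostly of a long list of local reducible configurations together with the routine, but intricate, verification that the chosen discharging leaves every face and vertex non-negative.
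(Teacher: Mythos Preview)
The paper does not itself prove Theorem~\ref{thm-numvert}; it is quoted from~\cite{dvkaw} and used as a black box. The paper does, however, describe the approach of~\cite{dvkaw} as proceeding by reducible configurations: one identifies a short list of unavoidable local configurations in any $B$-critical plane graph of girth $\ge 5$, and for each configuration one exhibits a reduction to a critical graph with strictly shorter outer face, so that the bound follows by induction on $\ell(B)$. Your high-level plan (discharging to force an unavoidable configuration, plus reductions) is compatible with that description.

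That said, two concrete points in your proposal do not work as written. First, the discharging you set up (vertex charge $\deg(v)-4$, internal face charge $\ell(f)-4$, and a collector charge at $B$) sums to a constant and, after redistribution, can only show that the final charge at $B$ is bounded by a constant. That inequality contains no reference to $|E(G)|$ at all, so ``rearranging this inequality'' cannot possibly produce $|E(G)|\le 18\ell(B)-160$. The edge bound in~\cite{dvkaw} is obtained not from a single global discharging but from the inductive scheme above: each reduction removes a controlled number of edges while shortening $B$ by at least one, and unwinding the recursion gives the linear bound. Your proposal conflates the discharging step (which only establishes unavoidability) with the counting step (which is the induction). Second, your derivation of the vertex bound from the edge bound is backwards: the inequality $2|E|\ge \ell(B)+5(|F|-1)$ together with Euler's formula gives a \emph{lower} bound on $|V|$, not an upper bound. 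The correct argument uses minimum degree: every vertex of $V(G)\setminus V(B)$ has degree $\ge 3$ in a $B$-critical graph and every vertex of $B$ has degree $\ge 2$, so $2|E|\ge 3|V|-\ell(B)$, whence $|V|\le (2|E|+\ell(B))/3\le (37\ell(B)-320)/3$.
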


The obvious algorithm to enumerate the critical graphs by trying all the graphs of the size given by Theorem~\ref{thm-numvert}
is too slow.  However, the proof of Theorem~\ref{thm-numvert} identifies a list of configurations
such that at least one of them must appear in each plane critical graph of girth at least $5$ with the precolored outer face.
For each such configuration, a reduction is provided that makes it possible to obtain $G$ from critical graphs with a shorter precolored outer face.
This leads to a practical algorithm to generate such graphs.  For the algorithm, it turns out to be simpler to use the following
easy corollary of the structural result of Dvo\v{r}\'ak and Kawarabayashi~\cite{dvkaw}.

\begin{theorem}[Dvo\v{r}\'ak and Kawarabayashi~\cite{dvkaw}]\label{thm-struct}
Let $G$ be a plane graph of girth at least $5$ with the outer face $B$ bounded by a cycle.  If $G$ is a $B$-critical graph,
then $G$ is $2$-connected and at least one of the following holds:

\begin{itemize}
\item[(a)] $G$ has a shortcut of length at most $4$, or
\item[(b)] $G$ contains two adjacent vertices of degree two (belonging to $B$), or
\item[(c)] there exists a path $P=v_0v_1v_2v_3v_4\subseteq B$ and a $4$-chord $Q=v_0w_1w_2w_3v_4$ of $B$ such that $v_2w_2\in E(G)$, or
\item[(d)] there exists a $4$-chord $Q=w_0w_1w_2w_3w_4$ of $B$ and $5$-faces
$C_1$ and $C_2$ such that a cycle $C\subseteq B\cup Q$ distinct from $B$ bounds a face of $G$,
$|V(C_1\cap B)|=|V(C_2\cap B)|=3$, $C_1\cap C=w_0w_1$ and $C_2\cap C=w_3w_4$.
\end{itemize}
See Figure~\ref{fig-thm-struct}.
\end{theorem}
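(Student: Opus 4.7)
The plan is a discharging argument on the plane embedding of $G$, in which criticality supplies the basic reductions that restrict local structure and the failure of (a)--(d) drives the contradiction.

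First, I would verify $2$-connectivity. If $v$ were a cut vertex separating $G$ into subgraphs $G_1,G_2$ each containing a portion of $B$, then $B$-criticality applied to each smaller $G_i$ would yield, for every precoloring of $B$, extensions to $G_i$ that agree at $v$ and combine to an extension to $G$, contradicting $B$-criticality. Disconnectedness is ruled out analogously. Standard criticality arguments (delete a low-degree interior vertex and re-extend) also show that every interior vertex of $G$ has degree at least $3$.

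Now suppose for contradiction that none of (a)--(d) holds. The failure of (a) means every $t$-chord of $B$ with $t\le 4$ has length at least the $B$-distance between its endpoints, and the failure of (b) means no two adjacent degree-$2$ vertices lie on $B$. Assign initial charge $d(v)-4$ to each vertex and $\ell(f)-4$ to each face; by Euler's formula the total is $-8$. Since $G$ has girth at least $5$, every interior face carries charge at least $1$, so the only negative contributions come from degree-$2$ and degree-$3$ vertices and from the outer face $B$ itself.

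I would then design discharging rules that send charge from interior $5$-faces and from interior vertices of degree at least $4$ toward low-degree boundary vertices and toward $B$. The main obstacle, and the technical heart of the proof, is arranging these rules so that the only way a recipient can fail to reach non-negative charge is by witnessing a copy of (c) or (d). Concretely, (c) should appear in the extremal case where a degree-$2$ boundary vertex $v_2$ cannot be fully compensated by its two incident $5$-faces because a $4$-chord $Q$ meets the middle of the boundary path $v_0v_1v_2v_3v_4$ at $v_2w_2$; and (d) should appear when two flanking $5$-faces $C_1,C_2$ together with a $4$-chord $Q$ cut off a face $C\subseteq B\cup Q$ whose boundary cannot absorb the deficit. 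Excluding (c) and (d) then forces every vertex and every interior face non-negative, so only $B$ can carry negative charge; but the constraints coming from the failure of (a) bound how negative $B$ can be in a way incompatible with total charge $-8$, yielding the contradiction. The remaining work is a long but mechanical local case analysis around each $\le 4$-chord of $B$, verifying that no alternative tight configuration escapes the rules.
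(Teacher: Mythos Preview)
The paper does not prove this theorem. Theorem~\ref{thm-struct} is quoted from Dvo\v{r}\'ak and Kawarabayashi~\cite{dvkaw} and is explicitly described as ``the following easy corollary of the structural result of Dvo\v{r}\'ak and Kawarabayashi~\cite{dvkaw}.'' There is no proof here to compare against; the paper uses the result as a black box to justify the enumeration algorithm in Theorem~\ref{thm-alg}.

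As for your proposal on its own merits: the overall shape (discharging with initial charges $\deg(v)-4$ and $\ell(f)-4$, totalling $-8$ by Euler) is the standard machinery for this family of results, and it is plausible that the proof in~\cite{dvkaw} runs along these lines. Your $2$-connectivity sketch is essentially right. But the outline as written has a real gap: you never specify the discharging rules, and the crucial sentence ``the constraints coming from the failure of (a) bound how negative $B$ can be in a way incompatible with total charge $-8$'' is doing all the work without any content. The outer face $B$ starts with charge $\ell(B)-4$, which is \emph{positive}, so the deficit lives entirely at degree-$2$ and degree-$3$ vertices; the question is whether the $5$-faces can pay them off. You need to say exactly how much a $5$-face sends to each incident light vertex and to each incident boundary vertex, and then analyse the tight cases face by face. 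It is not at all obvious from your description why the obstruction in configuration~(d) --- a $4$-chord cutting off a face $C$ with two specific flanking $5$-faces meeting $B$ in three vertices each --- is precisely what emerges, rather than some other near-boundary pattern. Until the rules are written down and the boundary case analysis is carried out, this is a plan rather than a proof.
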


\begin{figure}
\begin{center}
\includegraphics[width=12cm]{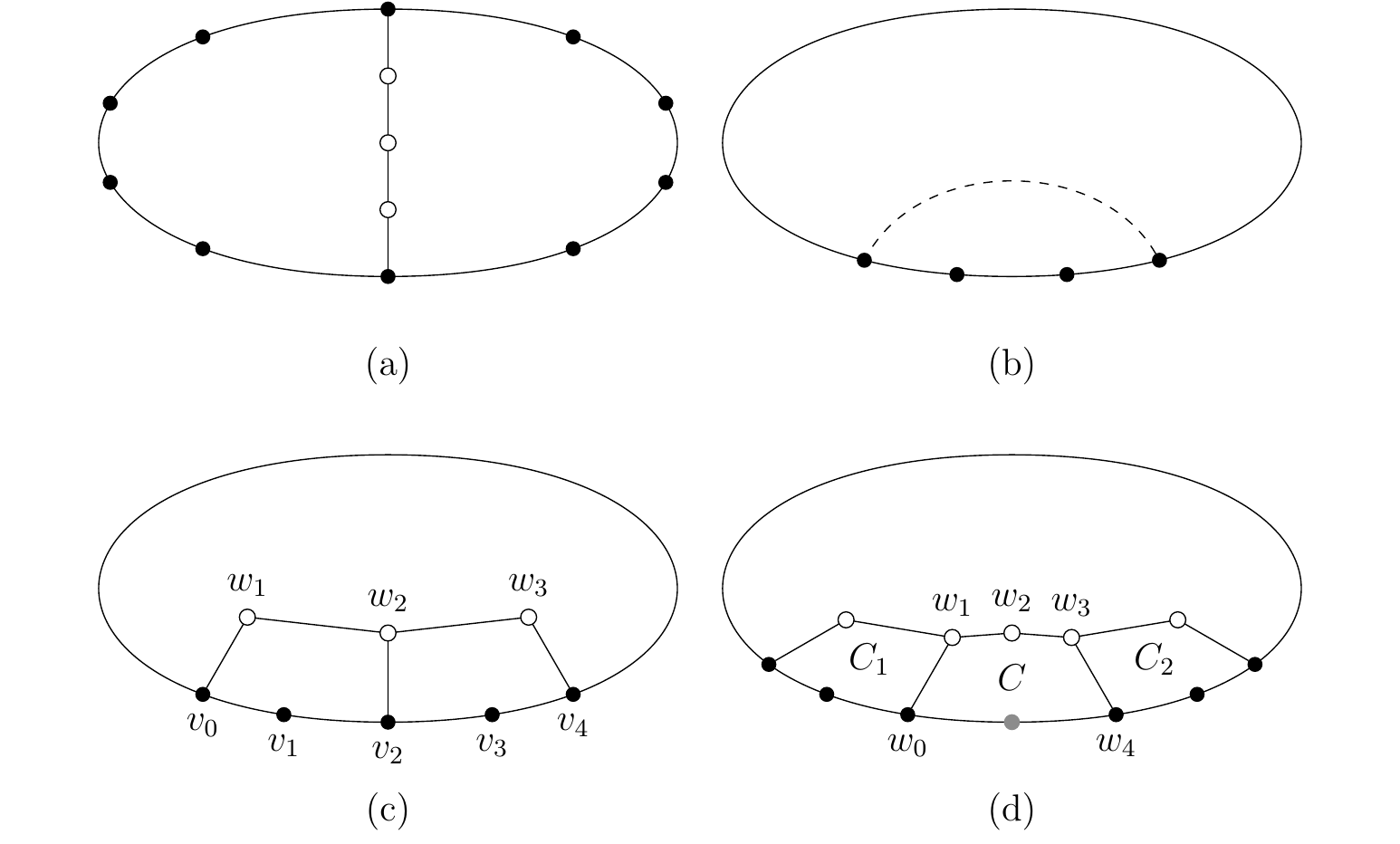}
\end{center}
\caption{Cases of Theorem~\ref{thm-struct}.}\label{fig-thm-struct}
\end{figure}

While these configurations are not sufficient to prove Theorem~\ref{thm-numvert}, each of the more complicated configurations
considered in the proof of Theorem~\ref{thm-numvert} contains one of the configurations of Theorem~\ref{thm-struct} as a subgraph.
For the reduction in case (d), we also need the following result, which is shown for strongly critical graphs in Thomassen~\cite{thom-surf},
and explicitly for critical graphs in Dvo\v{r}\'ak and Kawarabayashi~\cite{dvkaw}.  For a plane graph $G$ with the outer face $B$,
let $m(G)$ be the length of the longest face of $G$ distinct from $B$.

\begin{theorem}[\cite{dvkaw,thom-surf}]\label{thm-fsize}
Let $G$ be a plane graph of girth at least $5$ with the outer face $B$ bounded by a cycle.  If $G$ is a nontrivial $B$-critical graph, then $m(G)\le \ell(B)-3$.
\end{theorem}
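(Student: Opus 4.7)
My plan is to prove Theorem~\ref{thm-fsize} by contradiction. Suppose that $G$ is nontrivial $B$-critical of girth at least $5$ with outer face $B$ bounded by a cycle, and suppose some inner face $F$ satisfies $\ell(F)\ge\ell(B)-2$. Since $G$ is nontrivial $B$-critical and $B$ is a cycle, $G$ is $2$-connected, so $F$ is bounded by a cycle $C$; moreover $C\neq B$, as otherwise $G=B$, contradicting nontriviality. Because $F$ is a face, no vertex or edge of $G$ lies in the open disk bounded by $C$ on the $F$-side, so all of $G$ lies in the closed annular region between $B$ and $C$.

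The strategy is to show that every precoloring $\psi$ of $B$ extends to a proper $3$-coloring of $G$, which contradicts the $B$-criticality of $G$. The main tool is a Kempe chain argument. Pick an edge $e=uv\in E(C)\setminus E(B)$ (which exists since $C\neq B$ and $G$ is $2$-connected). By $B$-criticality there exist a precoloring $\psi$ of $B$ and a proper $3$-coloring $\phi$ of $G-e$ extending $\psi$ such that $\phi$ does not extend to $G$, so $\phi(u)=\phi(v)=a$ for some color $a$. For each of the two colors $b\neq a$, the $\{a,b\}$-bichromatic component of $\phi$ containing $u$ either contains $v$ or contains a vertex of $B$: otherwise, swapping colors inside the component gives a proper $3$-coloring of $G$ extending $\psi$, contradicting the choice of $\psi$. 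This produces two bichromatic paths (one per color $b$) from $u$ either to $v$ or into $V(B)$ in $G-e$.

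Together with $e$ and with arcs of $B$, these bichromatic paths carve the annular region between $B$ and $C$ into planar subregions whose boundary cycles lie in $G$. I would then apply Corollary~\ref{cor-cycles} to these boundary cycles to lower-bound their lengths, typically by $9$ or $10$ depending on how many interior vertices they contain. Summing the contributions of $\ell(B)$, $\ell(C)$, and the lengths of the bichromatic paths (counted with appropriate multiplicities), and using the hypothesis $\ell(C)=\ell(F)\ge\ell(B)-2$, I expect the bounds to combine into an arithmetic contradiction, for example with the girth-$5$ constraint applied via Euler's formula in the annulus.

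The main obstacle is the planar bookkeeping in this last step. The positions of $u$ and $v$ on $C$, the interaction of the Kempe paths with $B$, and especially the case $V(B)\cap V(C)\neq\emptyset$ where $B$ and $C$ share vertices or edges each require careful treatment. A cleaner alternative, which I would pursue if the direct approach stalls, is to induct on $|V(G)|$ using the structural reductions of Theorem~\ref{thm-struct}: in case~(a) a shortcut of length at most $4$ splits $G$ into two $B_i$-critical subgraphs with $\ell(B_i)<\ell(B)$ via Lemma~\ref{lemma-crs}, so the inductive hypothesis applied to the piece containing $F$ yields $\ell(F)\le \ell(B_i)-3\le\ell(B)-4$; in cases~(b)--(d) one performs the analogous reduction and tracks the maximum inner face length and $\ell(B)$ through the operation, the delicate step being to verify that the reduction never creates a face of length $\ge\ell(B)-2$ out of smaller faces.
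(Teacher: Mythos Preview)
The paper does not prove Theorem~\ref{thm-fsize}; it is quoted from \cite{dvkaw,thom-surf} and used as a black box, so there is no paper proof to compare against. I therefore evaluate your attempt on its own merits.

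Your Kempe-chain approach is not a proof as written, and I do not see how to complete it. You correctly obtain, for each colour $b\neq a$, an $\{a,b\}$-component of $u$ in $G-e$ that meets $v$ or $V(B)$. But these components are not paths, they may share many colour-$a$ vertices with one another, and they can weave between $B$ and $C$ in complicated ways. The step ``apply Corollary~\ref{cor-cycles} to the boundary cycles of the resulting regions and sum'' is exactly where the content lies, and you have not carried it out; I do not see an inequality that falls out. You yourself flag this as the obstacle, and I think it is a genuine one rather than mere bookkeeping.

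Your second approach, induction on $|V(G)|$ via Theorem~\ref{thm-struct}, is closer to how such statements are usually proved, but your sketch skips the only nontrivial point. In case~(a), if the shortcut $Q$ of length $t$ splits $G$ into $G_1,G_2$ with outer cycles $B_1,B_2$ and the face $F$ lies in $G_1$, you are fine when $G_1$ is nontrivial (induction gives $\ell(F)\le\ell(B_1)-3\le\ell(B)-4$). The delicate case is $G_1=B_1$, i.e.\ $F=B_1$ is itself a face: then you must show $\ell(B_1)\le\ell(B)-3$, equivalently $\ell(B_2)\ge 2t+3$. For $t=1$ this follows from girth~$5$; for $t\ge 2$ it does not, and you need an extra argument using that the $t-1$ internal vertices of $Q$ have degree $\ge 3$ in $G$ and hence in $G_2$, which forces $\ell(B_2)$ to be large via the known list of critical graphs with short outer face (Corollary~\ref{cor-cycles} or Figure~\ref{fig-10}). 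This case analysis is doable but must actually be done; similarly cases~(c) and~(d) need the face-length bookkeeping through the reduction to be written out, not just asserted.
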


We now define several graph generating operations roughly corresponding to the cases (a)--(d) of Theorem~\ref{thm-struct}.
Let $G_1$ and $G_2$ be plane graphs with outer faces $B_1$ and $B_2$, respectively.
\begin{itemize}
\item[(a)]
Let $P_i=v_0^iv_1^i\ldots v_t^i$ be paths
such that $P_i\subseteq B_i$ for $i\in \{1,2\}$ and some $t>0$.  We let $U(G_1, P_1, G_2, P_2)$ be the graph obtained from the disjoint union of
$G_1$ and $G_2$ by identifying $v_j^1$ with $v_j^2$ for $j=0,1,\ldots, t$ and suppressing the arising parallel edges.
\item[(b)] For an edge $e\in E(G_1)$, let $S(G_1, e)$ be the graph obtained from $G_1$ by subdividing the edge $e$ by one vertex.
\item[(c)] For a path $P=v_0w_1w_2w_3v_4\subseteq B_1$, let $J(G_1, P)$ be the graph obtained from $G_1$ by adding new vertices $v_1$, $v_2$ and $v_3$
and edges $v_0v_1$, $v_1v_2$, $v_2w_2$, $v_2v_3$ and $v_3v_4$.\\
\item[(d)]  Let $P=u_0u_1u_2u_3u_4\subseteq B_1$ be a path, let $y_1=u_1$, $y_2$, \ldots, $y_k=u_3$ be the
vertices adjacent to $u_2$ in the cyclic order according to their drawing around $u_2$
and let $f_i$ be the edge $u_2y_i$ for $1 \leq i \leq k$.
For $2\le i\le k$ and $0\le j\le 1$, let $X(G_1, P, f_i, j)$ be the plane graph obtained from $G_1$ by
splitting $u_2$ to two vertices $u_2'$ and $u_2''$ so that $u_2'$ is adjacent to $y_1$, $y_2$, \ldots, $y_{i-1}$
and $u_2''$ is adjacent to $y_{i}$, \ldots, $y_k$,
adding vertices $x_1$, $x_2$, \ldots, $x_{4+j}$ and edges $u_0x_1$, $x_1x_2$,
$x_2x_3$, \ldots, $x_{4+j}u_4$, $u'_2x_2$ and $u''_2x_{3+j}$.
See Figure~\ref{fig-X}. 
\end{itemize}
Note that $m(X(G_1, P, f_i, j))\le m(G_1)+j+3$ and the length $\ell$ of the
outer face of $X(G_1, P, f_i, j)$ is equal to $\ell(B_1)+j+1$.
By Theorem~\ref{thm-fsize}, if $G_1$ is a nontrivial $B_1$-critical graph, then $m(X(G_1, P, f_i, j))\le \ell(B_1)+j < \ell$.

\begin{figure}
\begin{center}
\includegraphics{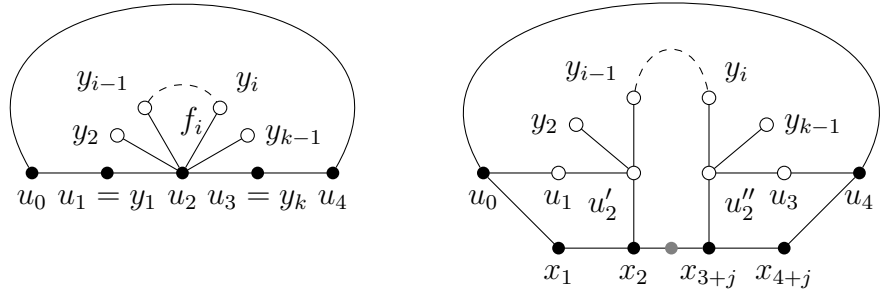}
\end{center}
\caption{$X(G_1, P, f_i, j)$.}\label{fig-X}
\end{figure}

For $i\ge 5$, let $\KK_i$ be the set of all (up to isomorphism) plane graphs $G$ of girth $5$ with the outer face $B$ bounded by a cycle,
such that $G$ is $B$-critical and $\ell(B)=i$.
By Theorem~\ref{thm-numvert} and Theorem~\ref{thm-crit12}, $\KK_i$ is finite.
For a $2$-connected plane graph $G$ with the outer face $B$, let $\KK(G)$ be the set of all graphs $H\supseteq G$ with the outer face $B$
such that for every face $C$ of $G$, the subgraph of $H$ drawn in the closed disk bounded by $C$ belongs to $\KK_{\ell(C)}$.  In other words,
$\KK(G)$ consists of graphs obtained from $G$ by pasting a critical graph to each face distinct from the outer one.
Let us remark that we do not exclude the case that the pasted graph is trivial, i.e., a face of $G$ may also be a face of some graphs in $\KK(G)$.
Note that $\KK(G)$ is finite, and can be constructed by a straightforward algorithm if the sets $\KK_i$ are provided for $5\le i\le m(G)$.

For some $\ell$, suppose that $\SSS$ is a finite set of plane graphs $G$ of girth at least $5$ with the outer face $B(G)$
bounded by  a cycle of length $\ell$.
Let $\KK(\SSS)=\bigcup_{G\in \SSS} \KK(G)$.
Let $\TT(\SSS)\subseteq \SSS$ be the set consisting of all $B(G)$-critical graphs $G$ in $\SSS$.
Let $\JJ'(\SSS)=\{J(G, P) : G\in \SSS, P\subseteq B(G), \ell(P)=4\}$.  Note that
the outer face of each graph $G$ in $\JJ'(\SSS)$ has length $\ell$.
Let $\SSS_0$, $\SSS_1$, $\SSS_2$, \ldots, be
the sequence of sets of graphs such that $\SSS_0=\TT(\SSS)$ and
$\SSS_{i+1}=\TT(\JJ'(\SSS_i))$ for $i\ge 0$.  Let $\JJ(\SSS)=\bigcup_{i\ge 0}
\SSS_i$.
Since $\JJ(\SSS)\subseteq \KK_{\ell}$, $\JJ(\SSS)$ is finite, and there exists $k$ such that $\SSS_i=\emptyset$ for each $i\ge k$.
Therefore, the set $\JJ(\SSS)$ can be constructed algorithmically by finding the sets $\SSS_0$, $\SSS_1$, \ldots, as long as they are non-empty.

Let
$$ \KK_i^{(a)} = 
\left\{\begin{array}{lc}
 & 1\le t\le 4, i+2t=i_1+i_2, 5\le i_1\le i_2\le i-1,\\
U(G_1,P_1,G_2,P_2) : & G_1\in \KK_{i_1}, G_2\in\KK_{i_2},\\
 & \hbox{$P_j$ is a path in the outer face of $G_j$ } \\ & \hbox{ with
$\ell(P_j)=t$, for $j\in\{1,2\}$}\end{array} \right\}.$$
Let $$\KK_i^{(b)} = \{S(G,e) : G\in \KK_{i-1}, \hbox{$e$ is an edge of the outer face of $G$}\}.$$
Let $$\KK_i^{(d)} = 
\left\{\begin{array}{lc}
 & 0\le j\le 1, G\in\KK_{i-j-1},\\
X(G,P,e,j) : & \hbox{$P=u_0u_1u_2u_3u_4$ is a path in the outer face of $G$},\\
&\hbox{$e\neq u_1u_2$ is incident with $u_2$}\end{array} \right\}.$$

Let $\KK''_i$ consist of all graphs in $\KK_i^{(a)}\cup \KK_i^{(b)}\cup \KK(\KK_i^{(d)})$ that have girth at least $5$.
Let $\KK'_i=\JJ(\KK''_i)$.  Note that the set $\KK'_i$ is finite and can be constructed algorithmically,
given the sets $\KK_j$ for $5\le j<i$.

\begin{theorem}\label{thm-alg}
The following holds for $i>5$: $\KK_i=\KK'_i$.
\end{theorem}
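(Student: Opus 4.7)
My plan is to prove $\KK_i=\KK'_i$ by induction on $|V(G)|$, handling the two inclusions separately and using Theorem~\ref{thm-struct} for the substantive direction.

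For $\KK_i\subseteq\KK'_i$, I would take $G\in\KK_i$ with $i>5$ and apply Theorem~\ref{thm-struct}: $G$ is $2$-connected and falls into one of cases (a)--(d). In case (a), a shortcut of length $t\in\{1,2,3,4\}$ splits $G$ along a path $P$ into two plane subgraphs $G_1,G_2$, which are themselves critical by Lemma~\ref{lemma-crs}, so $G_j\in\KK_{i_j}$ with $i_1+i_2=i+2t$ and $5\le i_1\le i_2\le i-1$, giving $G=U(G_1,P_1,G_2,P_2)\in\KK_i^{(a)}$. In case (b), the two adjacent vertices of degree $2$ on $B$ correspond to a subdivided edge of a smaller graph $G'\in\KK_{i-1}$, so $G=S(G',e)\in\KK_i^{(b)}$. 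In case (d), reversing the split of $u_2$ and deleting $x_1,\ldots,x_{4+j}$ yields a graph $G_1\in\KK_{i-j-1}$; its faces have length at most $m(G_1)+j+3<i$ by Theorem~\ref{thm-fsize}, so the subgraphs of $G$ drawn in those faces lie in $\KK_{\ell(\cdot)}$ by Lemma~\ref{lemma-crs}, giving $G\in\KK(\KK_i^{(d)})$. In these three subcases $G\in\KK''_i\subseteq\KK'_i$. In case (c), $G=J(G',P)$ for $G'=G-\{v_1,v_2,v_3\}$; one verifies $G'$ is $B'$-critical for its new outer face $B'$ of length $i$ with $|V(G')|<|V(G)|$, so by induction $G'\in\KK'_i=\JJ(\KK''_i)$, whence $G\in\JJ'(\JJ(\KK''_i))$, and since $G$ itself is critical we conclude $G\in\JJ(\KK''_i)=\KK'_i$.

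For $\KK'_i\subseteq\KK_i$, I would verify that each operation, together with the pasting via $\KK(\cdot)$ and the closure under $\JJ(\cdot)$, produces a $B$-critical graph of girth $\ge 5$ with outer face of length $i$; the girth constraint is enforced by the filter in the definition of $\KK''_i$. For $U(G_1,P_1,G_2,P_2)$ with critical $G_1,G_2$: obstructing precolorings of $B_1$ and $B_2$ can be chosen to agree on the identified path (length $\le 4$, with freedom from girth $\ge 5$) and combine to an obstructing precoloring of the merged face. For $S(G_1,e)$: the obstruction lifts directly across the single subdivision. For $J(G_1,P)$ and $X(G_1,P,f_i,j)$ together with pasted interior critical subgraphs: one matches precolorings of the enlarged outer face to obstructing precolorings of $B_1$ and of the pasted pieces, using that the added sub-structure has a tightly constrained coloring given its boundary.

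The main obstacle is the pair of criticality claims, particularly in case (c): that $G-\{v_1,v_2,v_3\}$ is $B'$-critical whenever $G$ is $B$-critical, and the analogous statement for case (d). The argument rests on showing that any obstructing precoloring of the smaller outer face extends, in a controlled way, to an obstructing precoloring of the larger outer face of $G$, so failure to extend persists under the reduction; this is essentially a color-counting argument on the small added sub-structure combined with Lemma~\ref{lemma-crs}. A secondary subtlety is termination of the induction in case (c): each un-$J$ step strictly reduces vertex count while preserving $i$, so finitely many un-$J$ steps reach a graph in $\KK''_i$, as forced by Theorem~\ref{thm-struct}.
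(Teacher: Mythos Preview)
Your plan for $\KK_i\subseteq\KK'_i$ is essentially the paper's approach (the paper iterates the case-(c) reduction explicitly rather than phrasing it as induction on $|V(G)|$, but this is cosmetic). Two technical points need repair. First, in case (d) the reduced graph obtained by merging $w_1,w_3$ and deleting the boundary vertices is \emph{not} in general $B'$-critical; you only know that some non-extending precoloring of $B_k$ gives rise to a non-extending precoloring of $B'$, hence the reduced graph contains a nontrivial $B'$-critical \emph{subgraph} $H'$, and it is $X(H',P,e,j)$ whose faces you fill via $\KK(\cdot)$. Second, the cases are not independent: to guarantee that the contraction in (b) keeps girth $\ge 5$ you need no shortcut of length $\le 2$ (so (a) must be excluded first), and to bound $j\le 1$ and verify girth in (d) you need that (a), (b), (c) have already been excluded. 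The paper makes this ordering explicit; your parallel case split hides it.

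Your treatment of $\KK'_i\subseteq\KK_i$ misreads the definition. You attempt to prove that each operation $U$, $S$, $J$, $X$ (with pasting) outputs a $B$-critical graph; this is false in general, and also unnecessary. By definition $\KK'_i=\JJ(\KK''_i)=\bigcup_{j\ge 0}\SSS_j$ with $\SSS_0=\TT(\KK''_i)$ and $\SSS_{j+1}=\TT(\JJ'(\SSS_j))$, and $\TT$ is precisely the filter that retains only the $B$-critical graphs. So every graph in $\KK'_i$ is $B$-critical \emph{by construction}; the only things to check are that outer-face length equals $i$ and that girth $\ge 5$ is preserved (the latter holds for $\KK''_i$ by its defining filter and is preserved by $J$), whence $\KK'_i\subseteq\KK_i$ in one line. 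Your ``obstructing precolorings combine'' arguments would at best show the output has some non-extending precoloring, which is far weaker than criticality.
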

\begin{proof}
Note that every graph $G\in\KK'_i$ is a plane $B$-critical graph of girth at least $5$, where $B$ is the outer face of $G$ and $\ell(B)=i$
and thus $\KK'_i\subseteq \KK_i$. Therefore, we only need to show that $\KK_i\subseteq \KK'_i$.  By Theorem~\ref{thm-numvert}, there exists
a constant $N$ such that $|V(H)|\le N$ for every $H\in \KK_i$.

Consider a graph $G\in \KK_i$ with the outer face $B$.  If there exists a path $P=v_0v_1v_2v_3v_4\subseteq B$ and a $4$-chord $Q = v_0w_1w_2w_3v_4$
of $B$ such that $v_2w_2\in E(G)$, then let $B'$ be the cycle obtained from $B$ by replacing $P$ by $Q$.  By Corollary~\ref{cor-cycles},
$v_0v_1v_2w_2w_1$ and $v_4v_3v_2w_2w_3$ are $5$-faces, and by Lemma~\ref{lemma-crs}, $G'=G-\{v_1,v_2,v_3\}$ is $B'$-critical.
It follows that $G=J(G',Q)$.  We conclude that there exists a sequence (of length at most $N/3$) of plane graphs $G=G_0$, $G_1$, \ldots, $G_k$
of girth at least $5$ with the outer faces $B=B_0, B_1, \ldots, B_k$, respectively, and paths $P_1$, \ldots, $P_k$ such that
$G_j$ is $B_j$-critical for $0\le j\le k$, $P_j\subseteq B_j$ and $G_{j-1}=J(G_j,P_j)$ for $1\le j\le k$, and $G_k$
does not contain the configuration (c) of Theorem~\ref{thm-struct}.  In other words, as long as $G_j$ contains the configuration (c), we keep reducing the graph and when there is no configuration (c), we stop. We claim that $G_k\in \KK''_i$, implying that $G\in \KK'_i$.

Since $G_k$ is a plane $B_k$-critical graph of girth at least $5$, Theorem~\ref{thm-struct} implies that it contains one of the configurations
(a), (b) or (d).  If it contains the configuration (a) (a shortcut $Q$ of length at most $4$), then let $C_1$ and $C_2$ be
the cycles in $B_k\cup Q$ distinct from $B_k$, and let $H_j$ be the subgraph of $G_k$ drawn in the closed disk bounded by $C_j$ for $j\in\{1,2\}$.
By Lemma~\ref{lemma-crs}, $H_j$ is $C_j$-critical, which implies that $H_j\in \KK_{\ell(C_j)}$.  Since $Q$ is a shortcut,
$\ell(C_j)<i$.  We conclude that $G_k\in \KK_i^{(a)}\subseteq\KK''_i$.  Therefore, we may assume that $G_k$ has no shortcut of length at most $4$.

Suppose that $G_k$ contains two adjacent vertices $u$ and $v$ of degree two (the configuration (b)).  Since $G_k$ is $B_k$-critical,
both $u$ and $v$ belong to $V(B_k)$.  The edge $uv$ is not contained in any cycle of length $5$, since otherwise there would exist a shortcut of length at most two.
Let $H$ be the graph obtained from $G_k$ by identifying the vertices $u$ and $v$ to a new vertex $w$, with the outer face $B'$,
and note that $H$ has girth at least $5$.    Furthermore, $H$ is $B'$-critical, since each precoloring of $B$ corresponds to a precoloring of
$B'$ matching it on $V(B)\setminus\{u,v\}$.  Observe that $G=S(H, e)$, where $e$ is an edge incident with $w$, and thus $G_k\in \KK_i^{(b)}\subseteq\KK''_i$.
Thus, we may assume that no two vertices of degree two are adjacent in $G_k$.  In particular, $G_k$ is a nontrivial $B_k$-critical graph,
and there exists a precoloring $\varphi$ of $B_k$ that does not extend to a coloring of $G_k$.

Finally, consider the case that $G_k$ contains the configuration (d). That is, there exists a $4$-chord $Q=w_0w_1w_2w_3w_4$ of $B_k$ and $5$-faces
$C_1$ and $C_2$ such that a cycle $C\subseteq B_k\cup Q$ distinct from $B_k$ bounds a face of $G$,
$|V(C_1\cap B_k)|=|V(C_2\cap B_k)|=3$, $C_1\cap C=w_0w_1$ and $C_1\cap C=w_3w_4$.  Since $G_k$ does not contain adjacent vertices of degree two, we have
$\ell(C)\le 6$.  Let $j=\ell(C)-5$.
Let $H$ be the graph obtained from $G_k$ by removing $w_0,w_4$ and their neighbors in $V(B)$ and by identifying
$w_1$ with $w_3$ to a new vertex $w$, and let $B'$ be the outer face of $H$.
Since $w_2$ has degree at least three\footnote{Note that vertices of degree one or two in $C$-critical graph $G$ must be in $C$. For every vertex $v$ of degree at most two, every coloring of $G-v$ extends to a coloring of $G$, which contradicts $C$-criticality if $v \not\in V(C)$.}, $w_1w_2w_3$ is not a subpath of the boundary of a face $F\neq C$ in $G_k$;
hence, Corollary~\ref{cor-cycles} implies that the girth of $H$ is at least $5$.
Indeed, if there is a cycle $Z$ in $H$ of length at most $4$, it must contain $w$. We can replace $w$ by $w_1,w_2,w_3$ and obtain a cycle $Z'$ of length at most $6$ in $G$. Since $w$ has degree at least three, the cycle is not a face which contradicts Corollary~\ref{cor-cycles}.

Observe that the precoloring $\varphi$ of $B_k$ (which does not extend to $G_k$) extends to a coloring $\psi$ of $(B_k\cup C_1\cup C_2\cup C)-\{w_2\}$ such that
$w_1$ and $w_3$ have the same color.  Since $\varphi$ does not extend to a coloring of $G_k$, we conclude that the precoloring of $B'$ given by $\psi$
does not extend to a coloring of $H$.  Therefore, $H$ has a nontrivial $B'$-critical subgraph $H'$.  Let $P\subseteq B'$ be the path of length
$4$ such that $w$ is the middle vertex of $P$.  Lemma~\ref{lemma-crs} implies that $G_k\in\KK(X(H',P,e, j))$ for some edge $e\in E(H')$ incident with $w$.
Thus, $G_k\in \KK(\KK_i^{(d)})\subseteq \KK''_i$.

It follows that $G_k\in \KK''_i$, and thus $G\in \KK'_i$.  Since the choice of $G$ was arbitrary, this implies that $\KK_i\subseteq\KK'_i$
and hence $\KK'_i=\KK_i$.
\end{proof}

The sets $\KK_5$, \ldots, $\KK_{12}$ are given by Theorem~\ref{thm-crit12}.  Theorem~\ref{thm-alg} gives an algorithm that we used
to construct the sets $\KK_{13}$, \ldots, $\KK_{16}$ (we also used the program to generate the sets $\KK_8,\ldots, \KK_{12}$,
to give it a better testing).  Theorem~\ref{thm-crit16} follows by the
inspection of the graphs in $\KK_5\cup\ldots\cup\KK_{16}$
(which was also computer assisted).

\section{Graphs on the cylinder}\label{sec-cyl}

Let us now turn our attention to graphs drawn in the cylinder.
Our goal is to describe plane graphs that are critical for two precolored $(\le\!4)$-faces, such that all other cycles have length at least $5$.
Such graphs can be thought of as embedded in the cylinder so that the two short faces are on the top and bottom of the cylinder.

First, in Lemma~\ref{lemma-cyl-le4} we use $\KK_5\cup\ldots\cup\KK_{16}$ to generate critical graphs on cylinder
with two precolored $(\le\!4)$-cycles at distance at most $4$ from each other and all other cycles of length at least $5$. This part is computer assisted.
Next, we glue pairs of these graphs together to obtain critical graphs with one non-precolored separating $(\le\!4)$-cycle, see Lemma~\ref{lemma-join2}.
We discuss the outcomes of gluing three such graphs in Lemma~\ref{lemma-join3}.
Finally, in Lemma~\ref{lemma-join} we give a general description of the critical graphs created by from those of Lemma~\ref{lemma-cyl-le4} by gluing.
We complete the description by Lemma~\ref{lemma-cyl-ge5}, which shows that
a plane graph with two precolored $(\le\!4)$-faces at distance at least $5$ and all other cycles of length at least $5$ is never critical.

\begin{lemma}\label{lemma-cyl-le4}
Let $G$ be a connected graph embedded on the cylinder with distinct boundaries $C_1$ and $C_2$ such that $\ell(C_1), \ell(C_2)\le 4$ and every cycle in $G$ distinct
from $C_1$ and $C_2$ has length at least $5$.  If $G$ is $(C_1\cup C_2)$-critical and the distance between $C_1$ and $C_2$ is at most $4$,
then $G$ is isomorphic to one of the graphs drawn in Figures~\ref{fig-44} or \ref{fig-34}.
\end{lemma}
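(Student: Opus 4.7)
The plan is to reduce to the already-enumerated disk case underlying Theorem~\ref{thm-alg}. Let $P = p_0 p_1 \ldots p_k$ be a shortest path in $G$ from $C_1$ to $C_2$, with $p_0 \in V(C_1)$, $p_k \in V(C_2)$, and $k \le 4$. Shortness forces $P$ to be chordless and its internal vertices to avoid $C_1 \cup C_2$. Cut $G$ along $P$ by duplicating every vertex and doubling every edge of $P$; the outcome $G^*$ is a plane graph whose outer face is bounded by a cycle $B$ of length $\ell(C_1) + \ell(C_2) + 2k \le 16$. A cycle of $G^*$ other than $B$ corresponds, under the identification undoing the cut, either to a cycle of $G$ distinct from $C_1, C_2$ (note that $C_1, C_2$ are broken into paths in $G^*$) or to a closed walk of $G$ that re-uses some $p_i$ and therefore decomposes into a non-contractible cycle of $G$; in either case the underlying cycle has length at least $5$, so $G^*$ has girth at least $5$.

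Since $G$ is $(C_1 \cup C_2)$-critical there is a precoloring $\varphi$ of $C_1 \cup C_2$ that does not extend to $G$; extending $\varphi$ greedily along the chordless path $P$ and pulling back through the cut gives a precoloring of $B$ that does not extend to $G^*$, so $G^*$ contains a nontrivial $B$-critical subgraph $H^*$. Using $(C_1 \cup C_2)$-criticality of $G$, I would argue that $H^*$ can differ from $G^*$ only on the doubled copies of edges of $P$: if an edge $e \in E(G^*) \setminus E(H^* \cup B)$ is not one of those doubled copies, then a witness precoloring for the criticality of $H^*$ pulls back through the cut to a precoloring of $C_1 \cup C_2$ that fails to extend to $G$ yet extends to $G - e_0$, where $e_0 \in E(G)$ is the edge underlying $e$, contradicting criticality of $G$.

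Consequently $H^*$ is a $B$-critical plane graph of girth $5$ with $\ell(B) \le 16$, and therefore belongs to the computer-enumerated list $\KK_5 \cup \cdots \cup \KK_{16}$ produced by Theorem~\ref{thm-alg} together with Theorem~\ref{thm-crit12}. To finish, the computer reverses the cut: for each such $H^*$, each partition of the outer cycle $B$ of $H^*$ into four sub-paths of the prescribed shape (two of length $\le 4$ representing the opened-up $C_1$ and $C_2$, two of equal length $k \le 4$ representing the two sides of $P$), and each choice of which edges of $P$ are doubled in $G^*$, the program reassembles a candidate cylinder graph and verifies $(C_1 \cup C_2)$-criticality directly. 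The result must coincide with Figures~\ref{fig-44} and~\ref{fig-34}.

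The main obstacle I anticipate is the delicate correspondence between $B$-critical subgraphs of $G^*$ and $(C_1 \cup C_2)$-critical subgraphs of $G$, especially at the doubled copies of edges of $P$; because $|E(P)| \le 4$, this reduces to a finite case analysis that the computer handles together with the reassembly. The rest of the argument — the girth check for $G^*$, the greedy extension of $\varphi$ along $P$, and the appeal to the disk enumeration — is essentially bookkeeping.
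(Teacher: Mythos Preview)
Your approach is the same as the paper's: cut along a shortest path $P$ to reduce to the disk enumeration $\KK_5\cup\cdots\cup\KK_{16}$, then let the computer undo the cut.  However, your argument that the cut graph $G^*$ is $B$-critical is where the proposal breaks down.

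You pass to a $B$-critical subgraph $H^*\subseteq G^*$ and then try to force $H^*=G^*$ by taking ``a witness precoloring for the criticality of $H^*$'' and pulling it back through the cut.  Two things fail here.  First, a precoloring of $B$ need not pull back to a precoloring of $C_1\cup C_2$: the two copies of each interior vertex of $P$ sit on $B$ and may receive different colours, so there is no map back to $G$.  Second, even granting a pull-back, your stated contradiction is not one: exhibiting a precoloring of $C_1\cup C_2$ that fails on $G$ but extends to $G-e_0$ is exactly what $(C_1\cup C_2)$-criticality asserts, not what it forbids.  So the paragraph does not establish $H^*=G^*$, and without that the computer search over $\KK_{\le 16}$ need not recover $G$.

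The paper avoids the detour through $H^*$ entirely by invoking Lemma~\ref{lemma-crs}: since $G$ is $(C_1\cup C_2)$-critical and $G$ is trivially a $(C_1\cup C_2\cup P)$-component of itself, $G$ is $(C_1\cup C_2\cup P)$-critical; cutting then gives that $G^*$ itself is $B$-critical, because a witness precoloring of $C_1\cup C_2\cup P$ pushes \emph{forward} to $B$ with both copies of $P$ agreeing, and any extension to $G^*$ pulls back to $G$.  Once you cite Lemma~\ref{lemma-crs}, the rest of your outline (the girth check, the bound $\ell(B)\le 16$, and the computer reassembly) matches the paper.  The paper also first normalises to $\ell(C_1)=\ell(C_2)=4$ and disposes of the shared-edge case via Corollary~\ref{cor-cycles} before cutting, which you may want to do as well to keep the cut clean.
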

\begin{proof}
Let us first consider the case that $\ell(C_1)=\ell(C_2)=4$.  
If $C_1$ and $C_2$ share an edge, then $C_1 \cup C_2$ contains a $6$-cycle which 
bounds a face by Corollary~\ref{cor-cycles}. Thus $G$ is graph $Z_1$ in Figure~\ref{fig-44}.
Therefore, we may assume $C_1$ and $C_2$ share no edges.

Let $P$ be a shortest path between $C_1$ and $C_2$.  By Lemma~\ref{lemma-crs},
$G$ is $(C_1\cup C_2\cup P)$-critical.  Let $H$ be the graph obtained from $G$ by cutting along the path $P$, splitting the vertices of $P$ into
two and duplicating the edges of $P$, and let $B$ be the resulting face.
See Figure~\ref{fig-split} for the splitting of $T_7$.
Observe that $H$ is $B$-critical and $B$ is a cycle.
Furthermore, $\ell(B)=\ell(C_1)+\ell(C_2)+2\ell(P)\le 16$, thus $H$ is one of the graphs in $\KK_5\cup\ldots\cup \KK_{16}$, which we enumerated using a computer in the previous section.

\begin{figure}
\begin{center}
\includegraphics{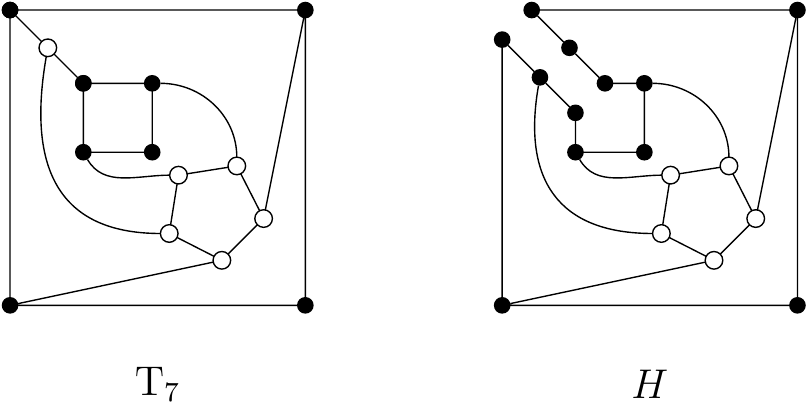}
\end{center}
\caption{Splitting of T$_7$ into $B$-critical graph $H$ where $B$ is the outer face of $H$.}\label{fig-split}
\end{figure}

Note that $G$ can be obtained from $H$ by identifying appropriate paths in the face $B$.  Using a computer, we checked all possible choices
of $H$ (as described in Section~\ref{sec-disk}) and the paths, and checked whether the resulting graph satisfies the assumptions of this lemma.  This way, we proved that
$G$ must be one of the graphs depicted in Figure~\ref{fig-44}. 

If $\ell(C_1)=3$ or $\ell(C_2)=3$, then we subdivide edges of $C_1$ or $C_2$, so that the new precolored cycles $C'_1$ and $C'_2$ have length exactly $4$.
This does not change the distance between the cycles, and the resulting graph $G'$ is $(C'_1\cup C'_2)$-critical.  Therefore, $G'$ is one of the
graphs depicted in Figure~\ref{fig-44}.  Inspection of these graphs shows that $G$ is one of the graphs in Figure~\ref{fig-34}.
\end{proof}

Let us remark that there are no graphs satisfying the assumptions of Lemma~\ref{lemma-cyl-le4} where the distance between $C_1$
and $C_2$ is exactly $4$, and only one such graph $R$ where the distance is exactly three.
This is the last computer-based result of this paper (although we used the computer to check the correctness of the case analyses
on several other places in the paper, all of them were also performed independently by hand).  In particular, if someone proved Lemma~\ref{lemma-cyl-le4} without
the use of a computer, this would give computer-free proofs of Theorems~\ref{thm-dist} and \ref{thm-main}.

For the graphs depicted in Figures~\ref{fig-44} and \ref{fig-34}, let $B$ denote the outer face and $T$ the other face of length at most $4$.
For a plane graph $G$ with faces $F_1$ and $F_2$ and a precoloring $\psi$ of $F_1$, let $c(G, F_1, \psi, F_2)$ be the number of colorings $\varphi$ of $F_2$
such that $\psi\cup \varphi$ does not extend to a coloring of $G$ (in case that $F_1$ and $F_2$ intersect,
 this includes the colorings that
assign the common vertices colors different from those given by $\psi$; say in $Z_2$ if $v$ is a common vertex of $F_1$ and $F_2$, we count colorings where $\psi(v) \neq \varphi(v)$).  Let $c(G,F_1, F_2)$ be the maximum of $c(G, F_1, \psi, F_2)$ over all precolorings $\psi$
of $F_1$.  By a straightforward inspection of the listed graphs, we find that
the values of $c(G, B, T)$ and $c(G, T, B)$ for the graphs in
Figures~\ref{fig-44} and \ref{fig-34} are as follows:
\begin{center}
\begin{tabular}{ccc|ccc}
$G$ & $c(G, B, T)$ & $c(G, T, B)$ & $G$ & $c(G, B, T)$ & $c(G, T, B)$ \\
\hline
$Z_1$ & $15$ & $15$ & $O_6$ & $4$ &  $11$ \\
$Z_2$ & $12$ & $12$ & $O_7$ &  $2$ &  $2$ \\
$Z_3$ & $16$ & $16$ & $T_1$ &  $8$ &  $8$ \\
$Z_4$ & $5$ &  $15$ & $T_2$ &  $1$ &  $2$ \\
$Z_5$ & $4$ &  $12$ & $T_3$ &  $4$ &  $4$ \\
$Z_6$ &  $4$ &  $4$ & $T_4$ &  $3$ &  $3$ \\
$O_1$ &  $6$ &  $6$ & $T_5$ &  $2$ &  $2$ \\
$O_2$ & $12$ & $11$ & $T_6$ &  $2$ &  $2$ \\
$O_3$ & $11$ & $11$ & $T_7$ &  $2$ &  $2$ \\
$O_4$ & $12$ & $12$ & $T_8$ &  $2$ &  $2$ \\
$O_5$ &  $2$ &  $6$ & $R$   &  $4$ &  $4$ \\
\end{tabular}
\end{center}

As an example, let us compute $c(T_1,B,T)$.
Let $B=b_1b_2b_3b_4$ and $T=t_1t_2t_3t_4$
Denote the two not precolored vertices by $z_1$ and $z_3$,
where $z_i$ is adjacent to $t_i$ and $b_i$ for $i \in \{1,3\}$. Observe that a
precoloring $\psi$ of $B$ and $\varphi$ of $T$ do not extend to a coloring of $T_1$
if and only if $\{\psi(b_1),\varphi(t_1)\} = \{\psi(b_3),\varphi(t_3)\}$ and
$\psi(b_1) \neq \varphi(t_1)$.
Hence we need to consider only two cases
for $\psi$: either  $\psi(t_1) = \psi(t_3)$ or $\psi(t_1) \neq \psi(t_3)$.
Assume first that $\psi(t_1) \neq \psi(t_3)$. Then $\varphi(b_1) = \psi(t_3)$
and $\varphi(b_3) = \psi(t_1)$. This leaves only one possibility for $\varphi$
of $b_2$ and $b_4$. Hence there is one coloring $\varphi$ such that
$\phi \cup \varphi$ does not extend to $T_1$.
For the second case assume that  $\psi(t_1) = \psi(t_3)$. There
are two possibilities for assigning $\varphi(b_1) = \varphi(b_3)$ such
that $\psi(t_1) \neq \varphi(b_1)$. Each of these two possibilities can be
extended to a coloring of $T$ if four ways. Hence the total number of precolorings $\varphi$ such that $\psi \cup \varphi$ does not extend is $8$.

Based on these numbers, we characterize critical graphs obtained by pasting two such cylinders together.
A cycle $C$ in a plane graph {\em separates} subgraphs $G_1$ and $G_2$ if neither of the closed regions of the plane bounded by $C$ contains both $G_1$ and $G_2$.

\begin{figure}
\begin{center}
\newcommand{\sze}{35mm}
\begin{tabular}{cccc}
\includegraphics[width=\sze]{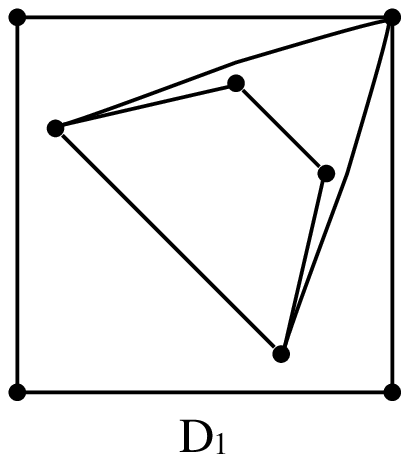}&
\includegraphics[width=\sze]{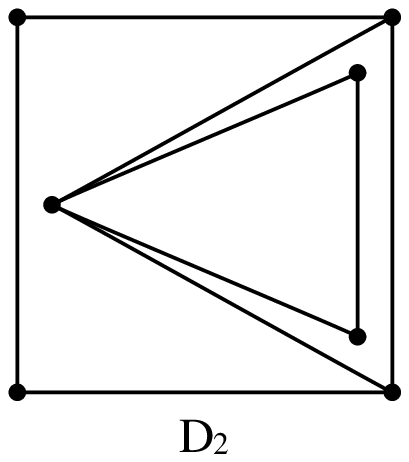}&
\includegraphics[width=\sze]{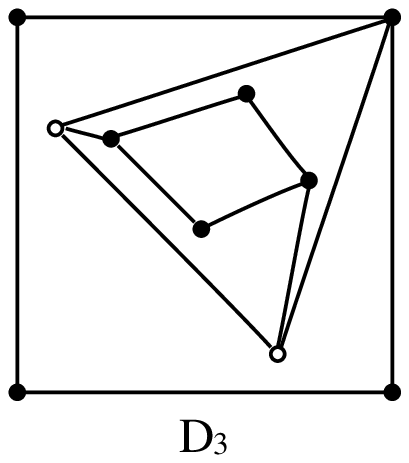}&
\includegraphics[width=\sze]{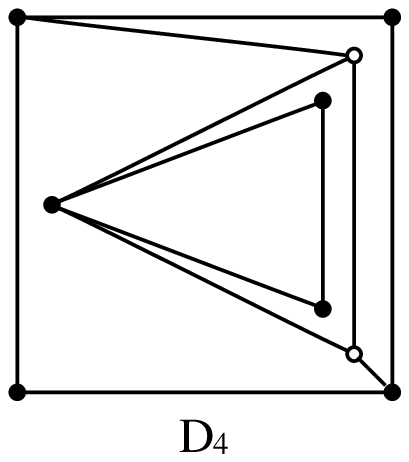}\\
\includegraphics[width=\sze]{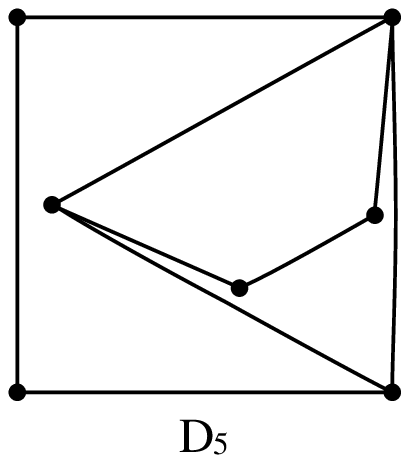}&
\includegraphics[width=\sze]{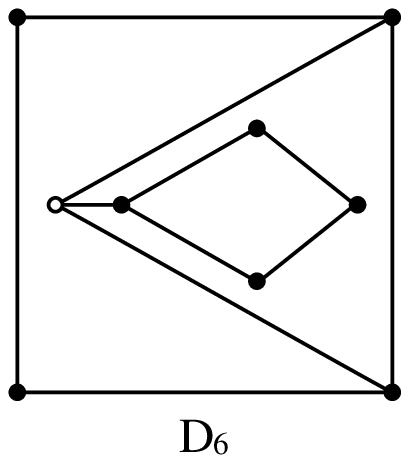}&
\includegraphics[width=\sze]{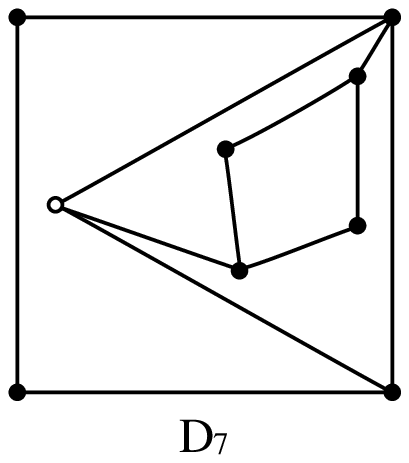}&
\includegraphics[width=\sze]{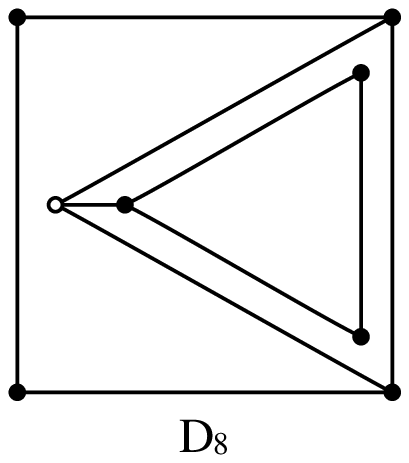}\\
\includegraphics[width=\sze]{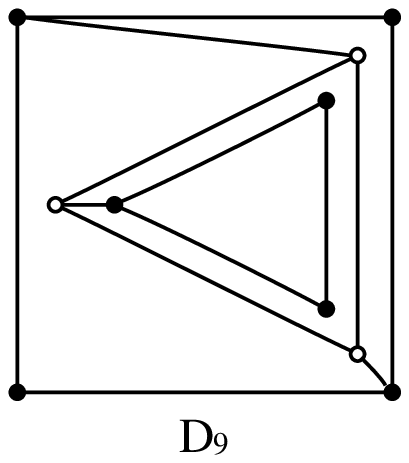}&
\includegraphics[width=\sze]{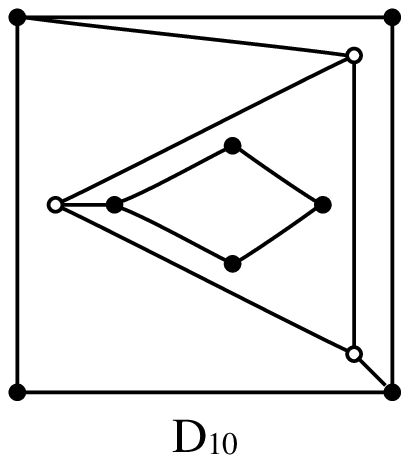}&
\includegraphics[width=\sze]{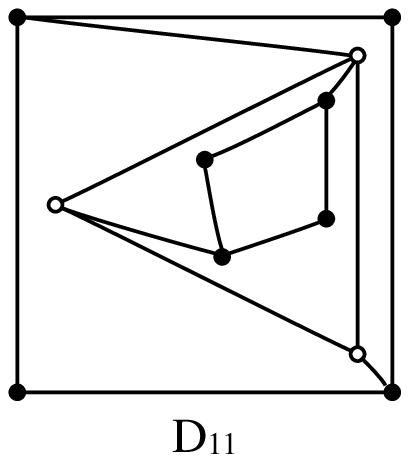}&
\end{tabular}
\end{center}
\caption{Critical graphs on the cylinder, one separating triangle.}\label{fig-join2a}
\end{figure}

\begin{figure}
\begin{center}
\newcommand{\sze}{35mm}
\begin{tabular}{cccc}
\includegraphics[width=\sze]{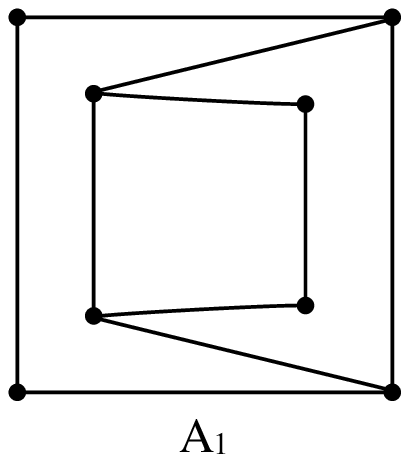}&
\includegraphics[width=\sze]{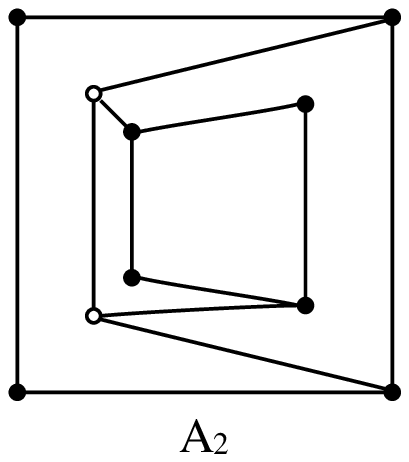}&
\includegraphics[width=\sze]{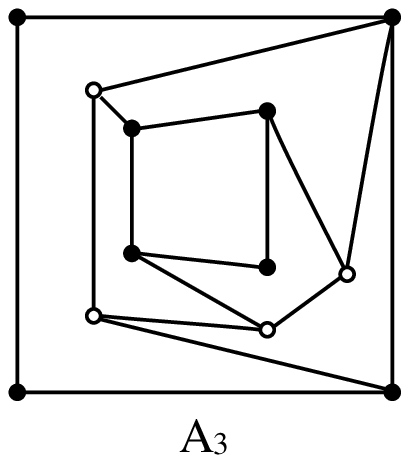}&
\includegraphics[width=\sze]{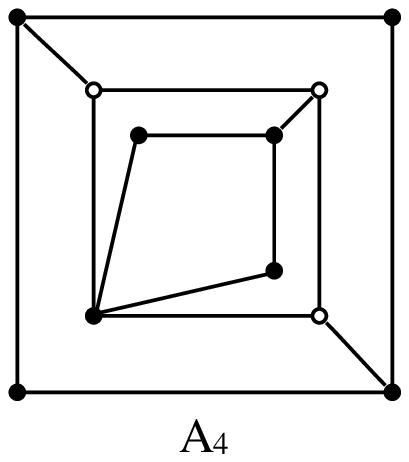}\\
\includegraphics[width=\sze]{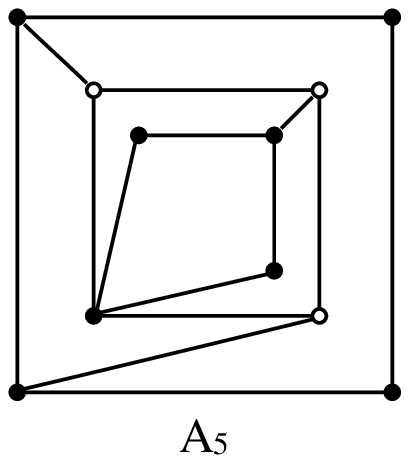}&
\includegraphics[width=\sze]{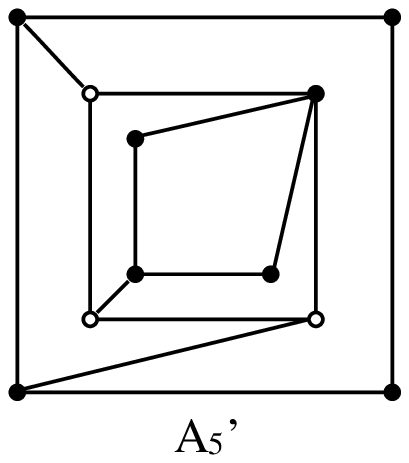}&
\includegraphics[width=\sze]{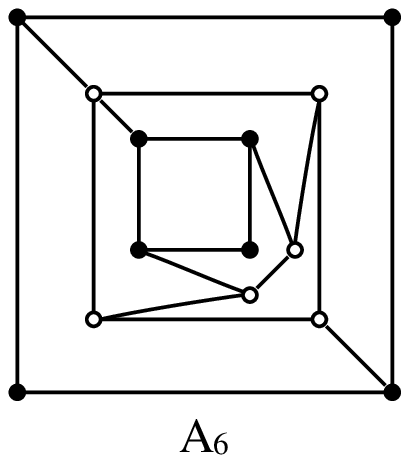}&
\includegraphics[width=\sze]{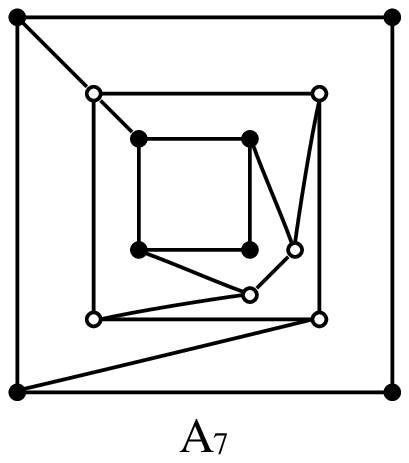}\\
\includegraphics[width=\sze]{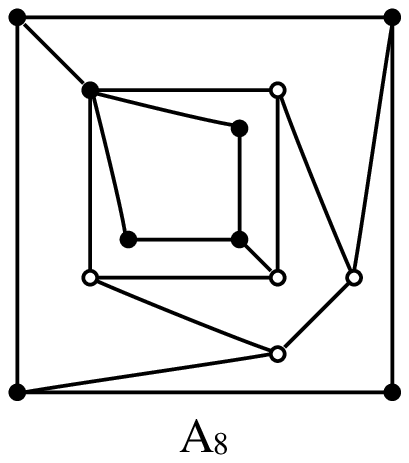}&
\includegraphics[width=\sze]{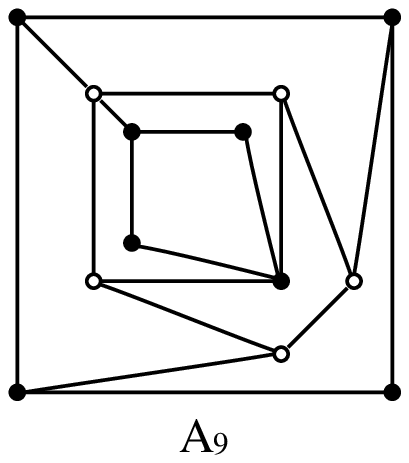}&
\includegraphics[width=\sze]{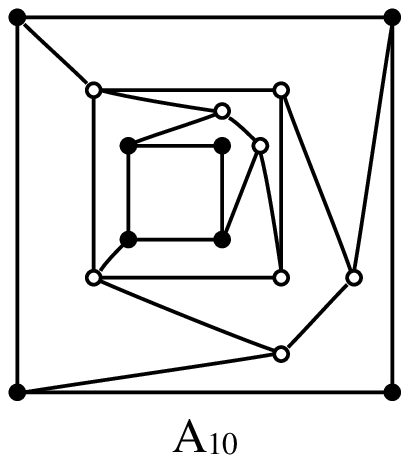}&
\includegraphics[width=\sze]{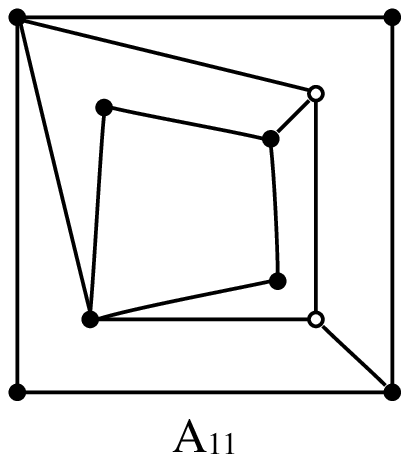}\\
\includegraphics[width=\sze]{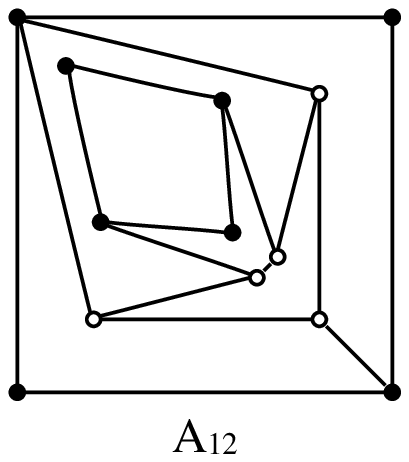}&
\includegraphics[width=\sze]{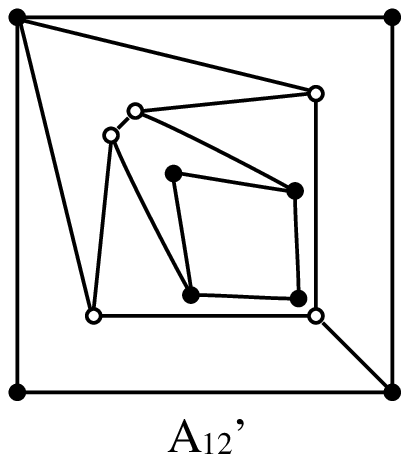}&
\includegraphics[width=\sze]{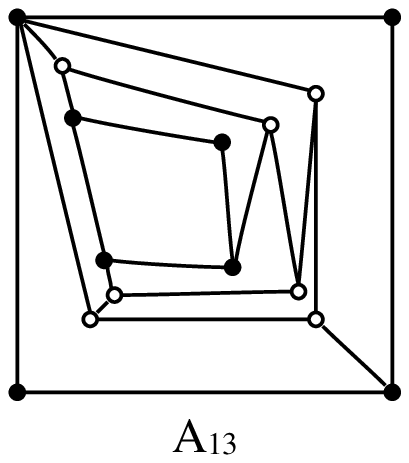}&
\end{tabular}
\end{center}
\caption{Critical graphs on the cylinder, one separating $4$-cycle. Note that A$_5$ and A$_5$' is the same graph but different embedding. We use primes to distinguish different embeddings.}\label{fig-join2b}
\end{figure}

\begin{figure}
\begin{center}
\newcommand{\sze}{35mm}
\begin{tabular}{cccc}
\includegraphics[width=\sze]{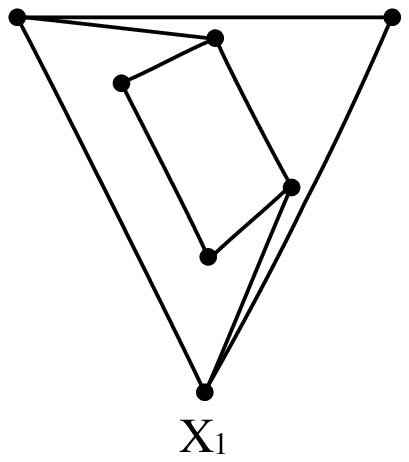}&
\includegraphics[width=\sze]{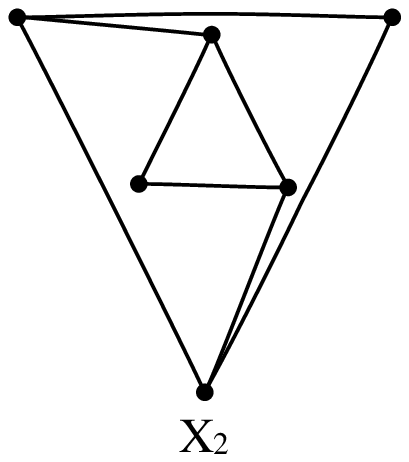}&
\includegraphics[width=\sze]{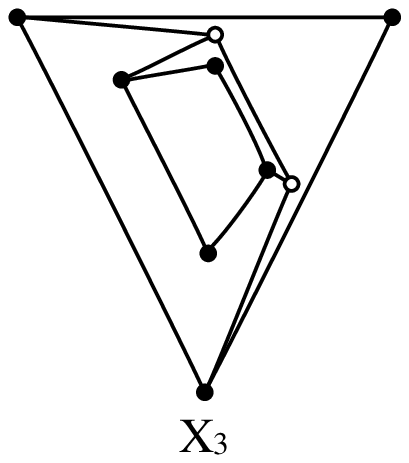}&
\includegraphics[width=\sze]{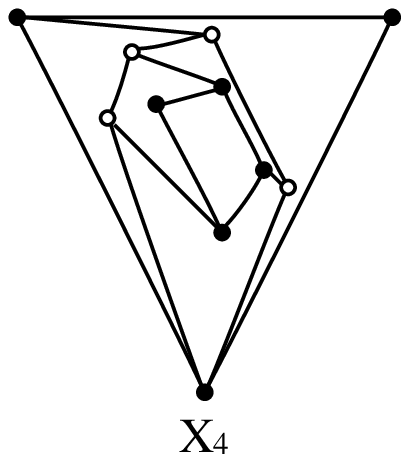}\\
\includegraphics[width=\sze]{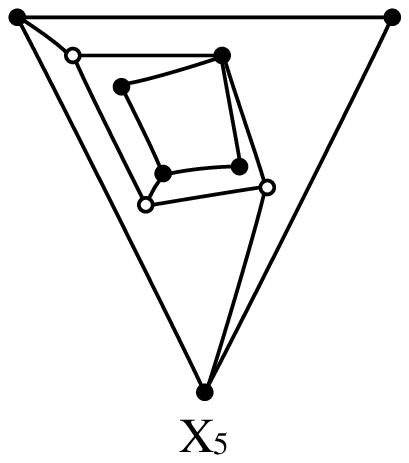}&
\includegraphics[width=\sze]{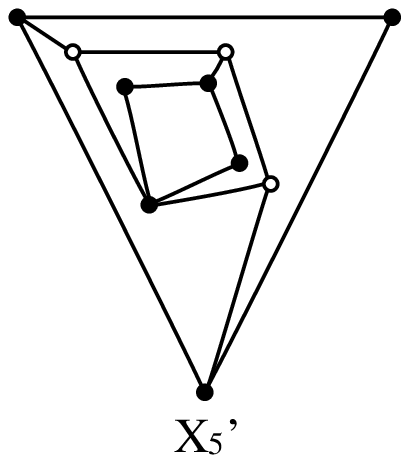}&
\includegraphics[width=\sze]{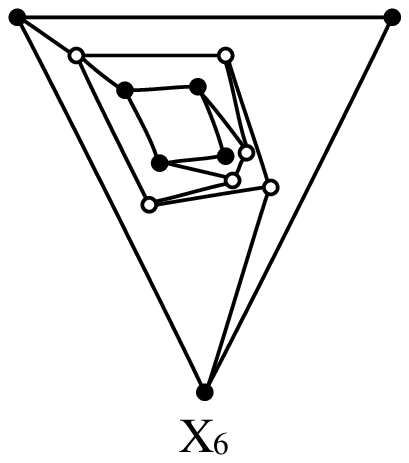}&
\end{tabular}
\end{center}
\caption{Critical graphs on the cylinder, one separating $4$-cycle, precolored triangle.}\label{fig-join2c}
\end{figure}

Let $G$ be a plane graph and $C_1$ and $C_2$ be two cycles
in $G$ such that $C_2$ is drawn in the closed interior of $C_1$. 
A graph $H$ \emph{drawn between} $C_1$ and $C_2$ is a graph
obtained from  $G$ by removing open exterior of $C_1$ and open interior of $C_2$. In particular, $C_i$ bounds a face in $H$ for $i \in \{1,2\}$.

\begin{lemma}\label{lemma-join2}
Let $G$ be a connected graph embedded on the cylinder with distinct boundaries $C_1$ and $C_2$ such that $\ell(C_1), \ell(C_2)\le 4$.  Let $C\subseteq G$ be a cycle of length
at most $4$ separating $C_1$ from $C_2$.  Assume that every cycle in $G$ distinct from $C$, $C_1$ and $C_2$ has length at least $5$, and that the distance
between $C_1$ and $C$, as well as the distance between $C$ and $C_2$, is at most $4$.
If $G$ is $(C_1\cup C_2)$-critical, then $G$ is isomorphic to one of the graphs drawn in Figures~\ref{fig-join2a}, \ref{fig-join2b} and \ref{fig-join2c}.
\end{lemma}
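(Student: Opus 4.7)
The plan is to cut $G$ along the separating cycle $C$, apply Lemma~\ref{lemma-cyl-le4} to each of the two resulting pieces, and then use the table of $c$-values to decide which combinations recombine into a $(C_1\cup C_2)$-critical graph. More precisely, let $G_1$ be the subgraph of $G$ drawn in the closed annular region between $C_1$ and $C$, and let $G_2$ be the subgraph drawn between $C$ and $C_2$. Because $C$ separates $C_1$ from $C_2$, $G_1$ is a $(C_1\cup C)$-component of $G$ and $G_2$ is a $(C\cup C_2)$-component (the condition $T\cap G_i\subseteq S_i$ is immediate), so Lemma~\ref{lemma-crs} makes $G_1$ a $(C_1\cup C)$-critical graph and $G_2$ a $(C\cup C_2)$-critical graph. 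All cycles of $G_i$ other than the two boundary cycles have length at least $5$, and the required distance bounds hold, so Lemma~\ref{lemma-cyl-le4} shows that each of $G_1$ and $G_2$ is isomorphic to one of the graphs listed in Figures~\ref{fig-44} and~\ref{fig-34}.

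Next I would reformulate $(C_1\cup C_2)$-criticality of $G$ as a counting condition between the two pieces. Let $\Phi$ be the set of proper $3$-colorings of $C$ (of size $6$ if $\ell(C)=3$ and $18$ if $\ell(C)=4$). For a precoloring $\psi_i$ of $C_i$, let $E_i(\psi_i)\subseteq\Phi$ consist of those colorings of $C$ that, together with $\psi_i$, extend to a coloring of $G_i$. Then $\psi_1\cup\psi_2$ fails to extend to $G$ exactly when $E_1(\psi_1)\cap E_2(\psi_2)=\emptyset$, i.e.\ when $\Phi\setminus E_1(\psi_1)$ and $\Phi\setminus E_2(\psi_2)$ together cover $\Phi$. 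In particular a necessary condition is
\[
c(G_1,C_1,\psi_1,C)+c(G_2,C_2,\psi_2,C)\ge |\Phi|,
\]
and maximising over $\psi_1,\psi_2$ the relevant quantity is $c(G_1,C_1,C)+c(G_2,C_2,C)\ge |\Phi|$.

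The third step is a finite bookkeeping exercise. Matching pieces by $\ell(C)$ and by which boundary of $G_i$ is identified with $C$, I would run through all ordered pairs $(G_1,G_2)$ from Figures~\ref{fig-44} and~\ref{fig-34}: the table of $c$-values eliminates most pairs outright through the inequality above. For each surviving pair, I would compute the sets $E_i(\psi_i)$ for the extremal precolorings $\psi_i$ and verify whether their complements can actually be made to cover $\Phi$ simultaneously (the numerical bound is only necessary, not sufficient). The remaining pairs, organised by the lengths of $C$, $C_1$ and $C_2$, give precisely the graphs drawn in Figure~\ref{fig-join2a} ($\ell(C)=3$, $\ell(C_1)=\ell(C_2)=4$), Figure~\ref{fig-join2b} ($\ell(C)=\ell(C_1)=\ell(C_2)=4$) and Figure~\ref{fig-join2c} ($\ell(C)=4$ and one of $C_1,C_2$ is a triangle).

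Finally I would confirm $(C_1\cup C_2)$-criticality of each surviving glued graph by checking that deleting any edge $e\in E(G)\setminus E(C_1\cup C_2)$ strictly enlarges $E_1(\psi_1)\cap E_2(\psi_2)$ for the ``bad'' precoloring: if $e\in E(G_i)\setminus E(C)$, then criticality of $G_i$ produces a new coloring of $C$ that one shows lies in $E_{3-i}(\psi_{3-i})$; if $e\in E(C)$, a direct check suffices. The main obstacle here is not the inequality but the bookkeeping around the action of the automorphisms of $C$: different embeddings that have the same underlying abstract graph (such as $A_5$ and $A_5'$) correspond to genuinely different identifications, and one must track how colors of $C$ are permuted by the two sides so as not to overcount or miss isomorphism classes. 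This is what makes the case analysis, rather than the numerical argument itself, the delicate part of the proof.
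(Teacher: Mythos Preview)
Your approach is essentially the paper's: cut along $C$, apply Lemma~\ref{lemma-crs} and Lemma~\ref{lemma-cyl-le4} to each piece, use the inequality $c(G_1,C_1,C)+c(G_2,C_2,C)\ge|\Phi|$ as a first filter, and then carry out a finite case analysis on the surviving pairs. Two remarks are worth making.

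First, the paper exploits one further filter that you do not mention and that cuts the case work considerably: since $G$ is $(C_1\cup C_2)$-critical, it has no non-precolored vertex of degree at most two. Hence if the $C$-side of $G_1$ contains vertices of degree two in $G_1$, those vertices must either lie in $C_2$ or acquire extra incidences from $G_2$. This structural constraint, together with the numerical one, is what lets the paper reduce each subcase to a handful of possibilities (e.g.\ $G_1\in\{Z_5,Z_6\}$ forces $G_2\in\{Z_4,O_6\}$).

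Second, your sketch of the criticality verification is slightly off. Criticality of $G_i$ with respect to $C_i\cup C$ only guarantees, for each edge $e$, \emph{some} precoloring of $C_i\cup C$ that distinguishes $G_i$ from $G_i-e$; it need not restrict to your fixed $\psi_i$ on $C_i$, and even if it does, the new coloring of $C$ need not lie in $E_{3-i}(\psi_{3-i})$. So the argument you outline would in effect be checking strong $(C_1\cup C_2)$-criticality, not criticality. This is not fatal, since the verification is a finite check on an explicit short list and can simply be done directly (which is what the paper does, tersely, with phrases like ``excluding the combinations that do not result in a critical graph'').
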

\begin{proof}
Let $G_i$ be the subgraph of $G$ drawn between $C_i$ and $C$, for $i\in\{1,2\}$. 
By Lemma~\ref{lemma-crs} and Lemma~\ref{lemma-cyl-le4}, $G_i$ is equal to one of the
graphs drawn in Figures~\ref{fig-44} and \ref{fig-34}.  Let $\varphi$ be a precoloring of $C_1\cup C_2$ that does not extend to a coloring of $G$.
Suppose first that $\ell(C)=3$, i.e., the situation depicted in Figure~\ref{fig-join2a}.
There exist $6$ colorings of $C$ by three colors.  Observe that for every coloring $\psi$ of $C$ there exists $i\in\{1,2\}$
such that the precoloring of $C_i$ and $C$ given by $\varphi\cup\psi$ does not extend to $G_i$, and thus $c(G_1, C_1, C)+c(G_2,C_2,C)\ge 6$.
By symmetry, we may assume that $c(G_1, C_1, C)\geq 3$ and hence $G_1$ is one of $Z_4$, $Z_5$, $Z_6$ and $O_6$.  If $G_1\in\{Z_5,Z_6\}$, then $C$ contains two vertices that have degree two
in $G_1$, and since $G$ is critical, they must either belong to $C_2$ or have degree at least three, implying that $G_2\in\{Z_4,O_6\}$.  Hence, $G$ is
one of the graphs $D_1$, $D_2$, $D_3$ or $D_4$.  If $G_1=Z_4$, then we conclude similarly that $G$ is one of $D_1$, $D_2$, $D_5$, $D_6$, $D_7$ or $D_8$,
and if $G_1=O_6$, then $G$ is one of $D_3$, $D_4$, $D_7$, $D_9$, $D_{10}$ or $D_{11}$.

Let us now consider the case that $\ell(C_1)=\ell(C_2)=\ell(C)=4$, as depicted in Figure~\ref{fig-join2b}.  Since $C$ has $18$ colorings, we have
$c(G_1, C_1, C)+c(G_2,C_2,C)\ge 18$.  We may assume that $c(G_1,C_1,C) \geq 9$, i.e., $G_1\in \{Z_1, Z_2, Z_3, O_2, O_3, O_4\}$.  If $G_1\in \{Z_1, Z_2\}$ or $G_1=O_2$ with $C_1$ being
the outer face of $O_2$, then $C$ contains two adjacent vertices whose degree is two in $G_1$.  These vertices must either belong to $C_2$, or their
degree must be at least three in $G_2$.  We conclude (also taking into account that $c(G_2,C_2,C) \geq 3$) that $G_2\in \{Z_1, O_2, O_4, T_3, T_4\}$, and (excluding the combinations that do not
result in a critical graph), $G$ is one of the graphs $A_1$, $A_2$ or $A_3$.  From now on, assume that $G_1,G_2\not\in\{Z_1,Z_2\}$. If $G_1=O_3$ or $G_1=O_2$ with $C$ being
the outer face of $O_2$, then we similarly conclude that $G_2\in\{Z_3, O_2, O_3, O_4, T_1\}$ and $G$ is one of the graphs $A_4$, $A_5$, $A_5'$, $A_6$ or $A_7$.
We may assume that $G_1,G_2\not\in\{O_2,O_3\}$.  If $G_1=O_4$, then $G_2\in \{Z_3, O_1, O_4, T_1\}$, and $G$ is $A_8$, $A_9$ or $A_{10}$.
Finally, if $G_1=Z_3$, then $G$ is $A_{11}$, $A_{12}$, $A'_{12}$ or $A_{13}$.

If $\ell(C)=4$ and $\ell(C_1)=3$ or $\ell(C_2)=3$, then $G$ is one of the graphs in Figure~\ref{fig-join2c}, obtained from those in
Figure~\ref{fig-join2b} by suppressing vertices of degree two.
\end{proof}

Again, let us summarize the values of $c(G, B, T)$ and $c(G, T, B)$ for these
graphs:

\begin{center}
\begin{tabular}{ccc|ccc}
$G$ & $c(G, B, T)$ & $c(G, T, B)$ & $G$ & $c(G, B, T)$ & $c(G, T, B)$ \\
\hline
$D_1$    & $12$ & $12$ & $A_5$, $A'_5$       &  $4$ &  $6$ \\
$D_2$    &  $4$ & $12$ & $A_6$               &  $2$ &  $1$ \\
$D_3$    &  $8$ & $12$ & $A_7$               &  $2$ &  $3$ \\
$D_4$    &  $4$ &  $8$ & $A_8$               & $10$ & $10$ \\
$D_5$    & $15$ & $15$ & $A_9$               &  $9$ &  $8$ \\
$D_6$    &  $6$ &  $6$ & $A_{10}$            &  $2$ &  $2$ \\
$D_7$    & $11$ & $12$ & $A_{11}$            & $14$ & $14$ \\
$D_8$    &  $2$ &  $6$ & $A_{12}$, $A'_{12}$ &  $4$ &  $4$ \\
$D_9$    &  $2$ &  $4$ & $A_{13}$            &  $4$ &  $4$ \\
$D_{10}$ &  $6$ &  $4$ & $X_1$               &  $9$ &  $3$ \\
$D_{11}$ &  $6$ &  $6$ & $X_2$               &  $3$ &  $3$ \\
$A_1$    &  $9$ &  $9$ & $X_3$               &  $1$ &  $1$ \\
$A_2$    &  $1$ &  $3$ & $X_4$               &  $2$ &  $1$ \\
$A_3$    &  $2$ &  $3$ & $X_5$               &  $4$ &  $2$ \\
$A_4$    &  $4$ &  $2$ & $X_6$               &  $2$ &  $1$ \\
\end{tabular}
\end{center}

We proceed by listing the graphs with two separating cycles of length at most $4$.

\begin{figure}
\begin{center}
\newcommand{\sze}{35mm}
\begin{tabular}{ccc}
\includegraphics[width=\sze]{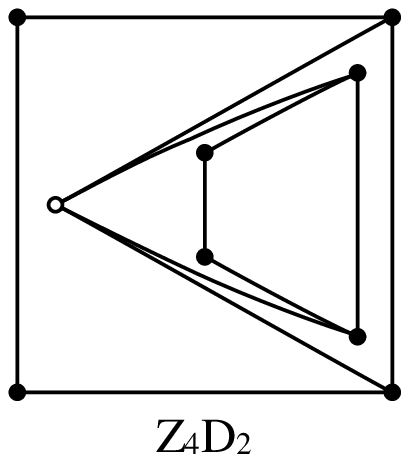}&
\includegraphics[width=\sze]{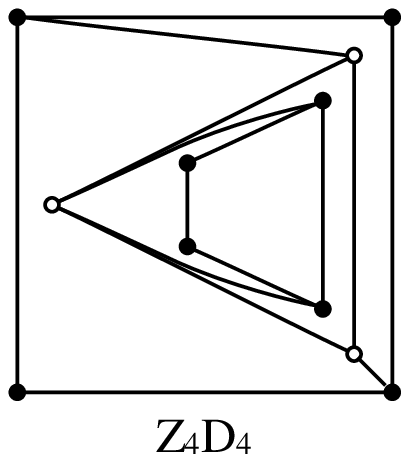}&
\includegraphics[width=\sze]{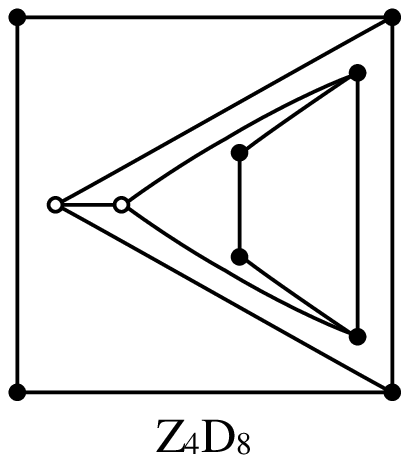}\\
\includegraphics[width=\sze]{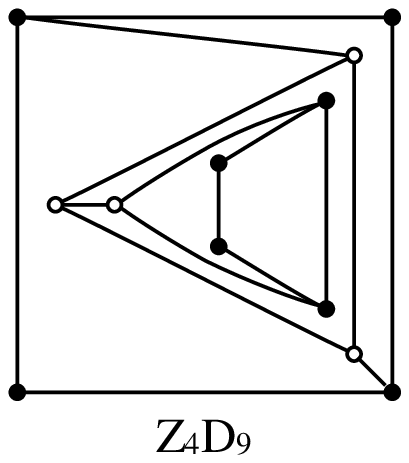}&
\includegraphics[width=\sze]{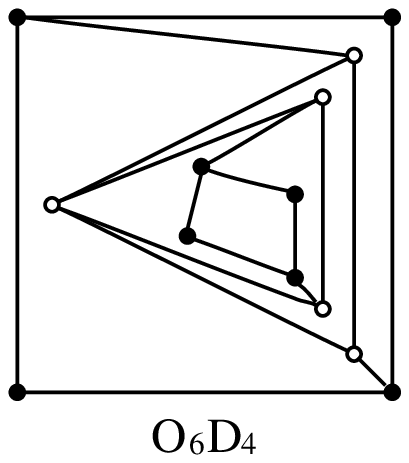}&
\includegraphics[width=\sze]{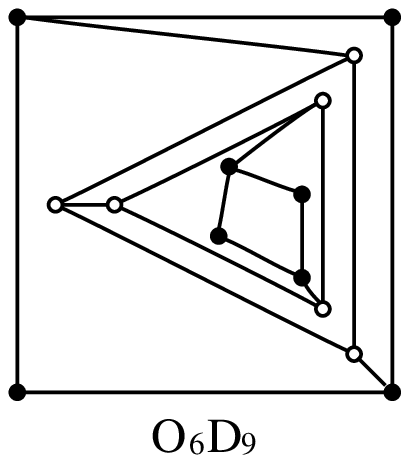}
\end{tabular}
\end{center}
\caption{Critical graphs on the cylinder, two separating triangles.}\label{fig-join3a}
\end{figure}

\begin{figure}
\begin{center}
\newcommand{\sze}{35mm}
\begin{tabular}{cccc}
\includegraphics[width=\sze]{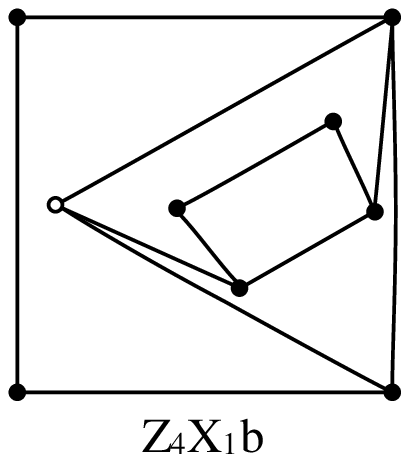}&
\includegraphics[width=\sze]{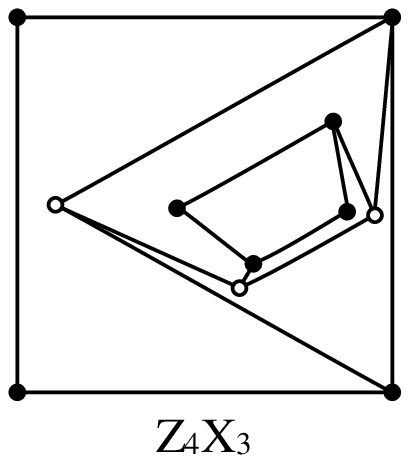}&
\includegraphics[width=\sze]{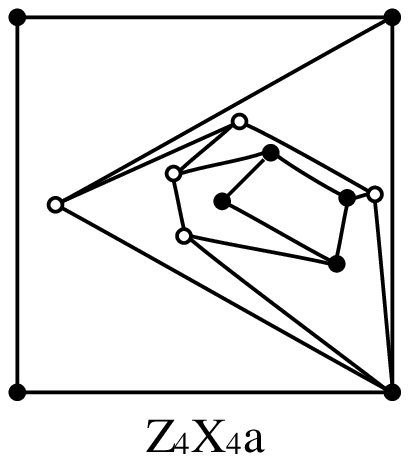}&
\includegraphics[width=\sze]{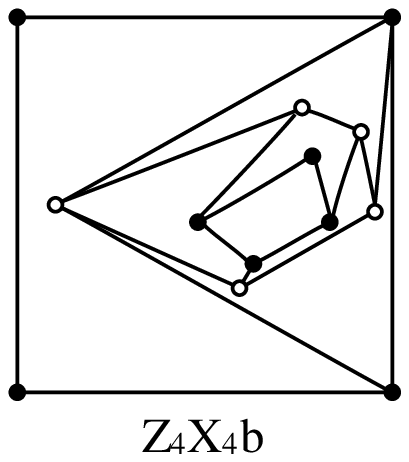}\\
\includegraphics[width=\sze]{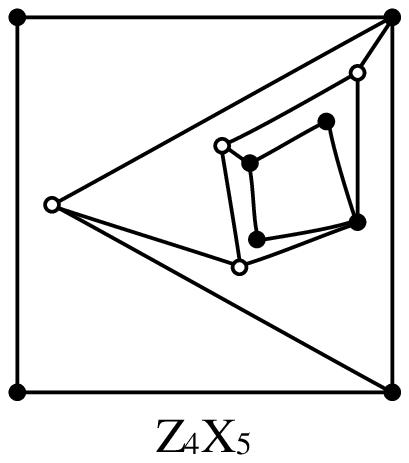}&
\includegraphics[width=\sze]{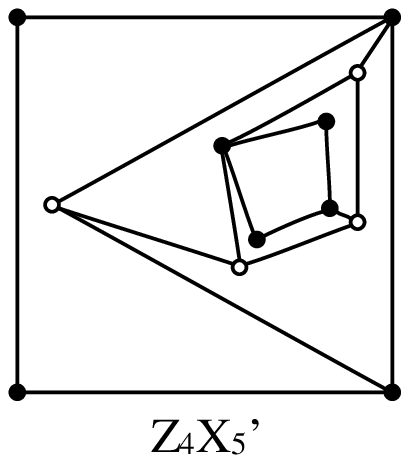}&
\includegraphics[width=\sze]{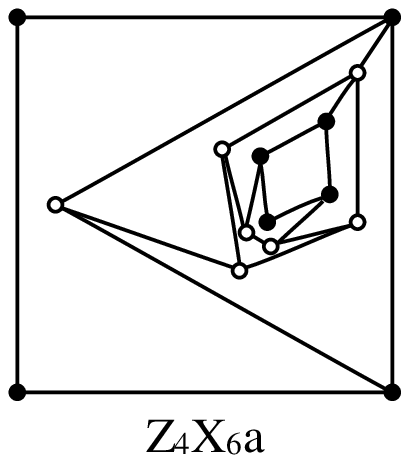}&
\includegraphics[width=\sze]{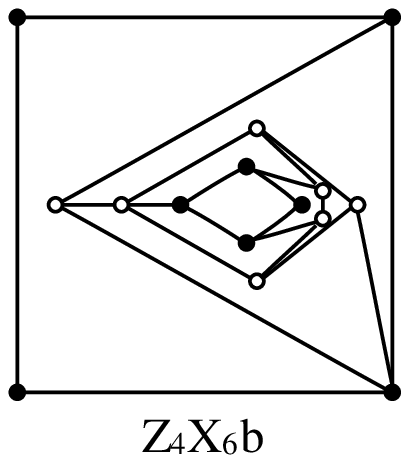}\\
\includegraphics[width=\sze]{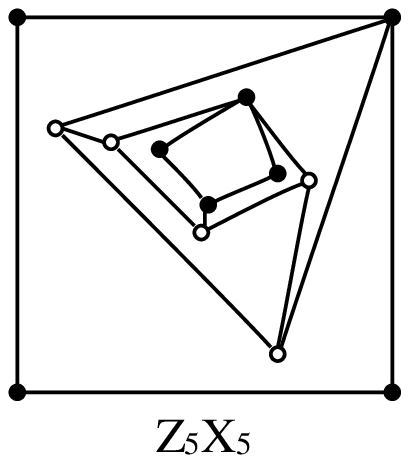}&
\includegraphics[width=\sze]{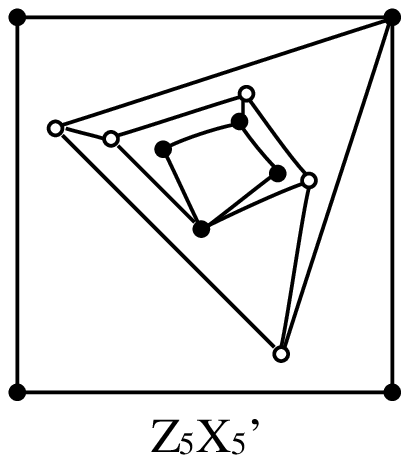}&
\includegraphics[width=\sze]{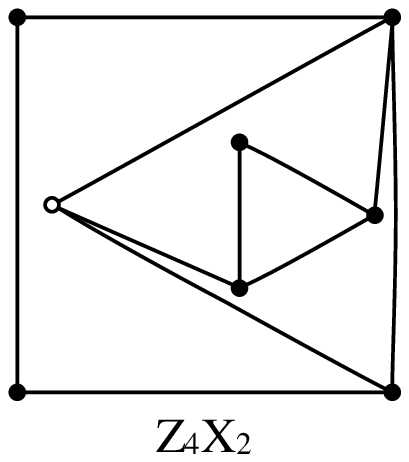}&
\includegraphics[width=\sze]{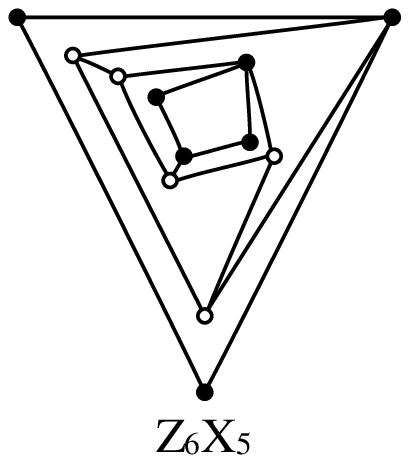}\\
\includegraphics[width=\sze]{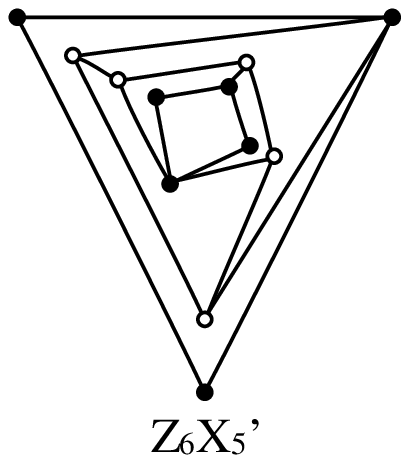}&
\end{tabular}
\end{center}
\caption{Critical graphs on the cylinder, separating triangle and a $4$-cycle.}\label{fig-join3b}
\end{figure}

\begin{figure}
\begin{center}
\newcommand{\sze}{35mm}
\begin{tabular}{cccc}
\includegraphics[width=\sze]{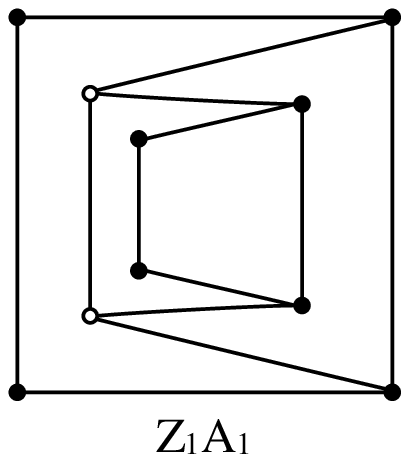}&
\includegraphics[width=\sze]{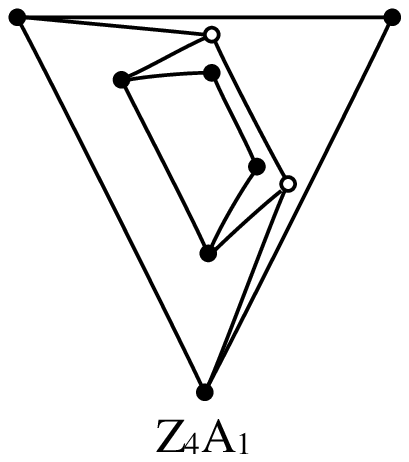}&
\includegraphics[width=\sze]{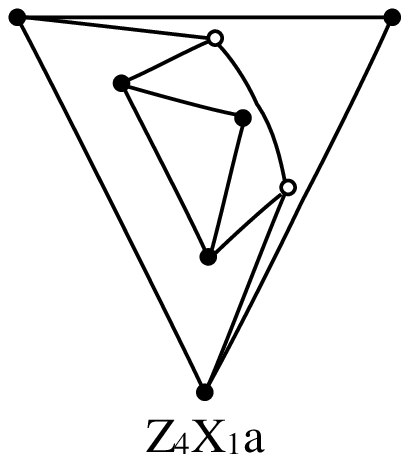}&
\includegraphics[width=\sze]{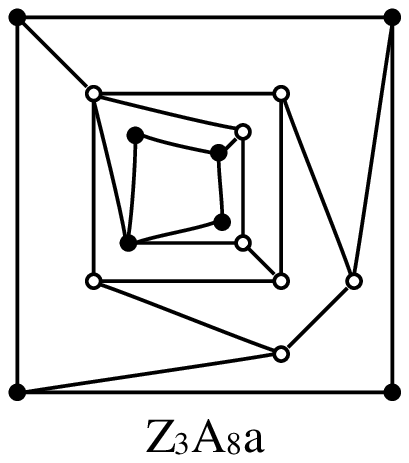}\\
\includegraphics[width=\sze]{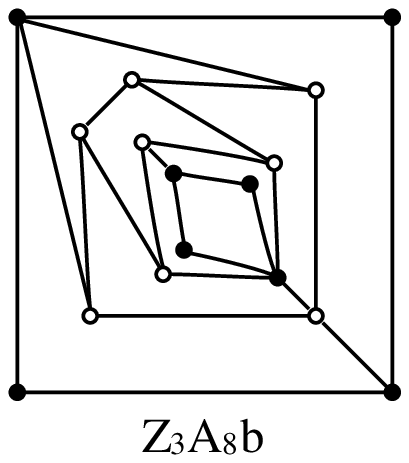}&
\includegraphics[width=\sze]{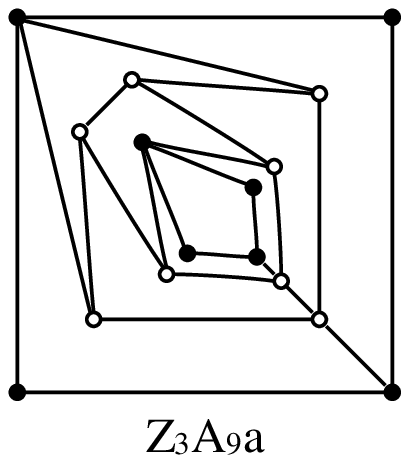}&
\includegraphics[width=\sze]{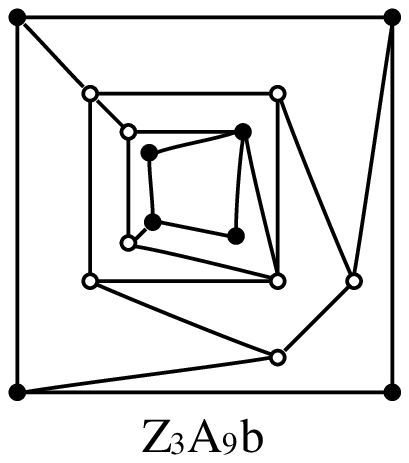}&
\includegraphics[width=\sze]{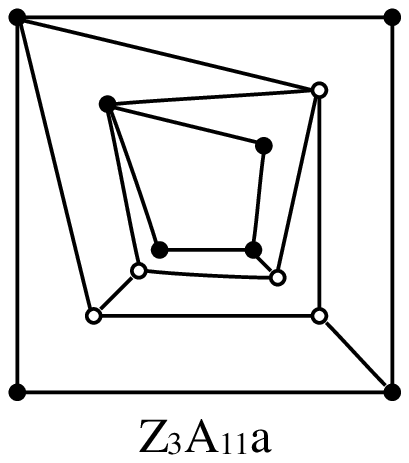}\\
\includegraphics[width=\sze]{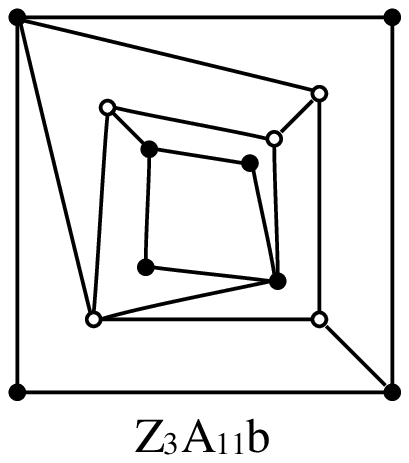}&
\includegraphics[width=\sze]{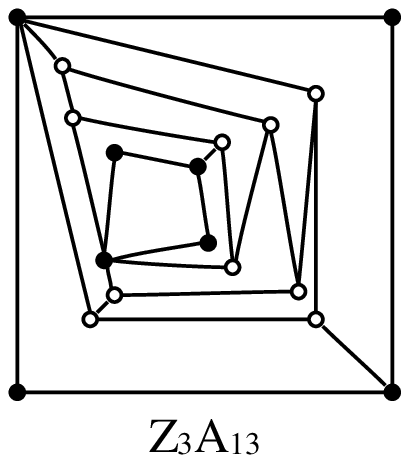}&
\includegraphics[width=\sze]{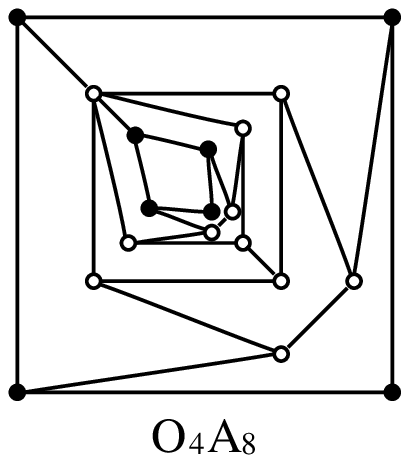}&
\includegraphics[width=\sze]{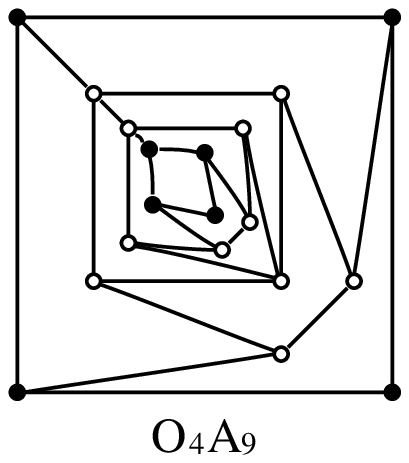}
\end{tabular}
\end{center}
\caption{Critical graphs on the cylinder, two separating $4$-cycles.}\label{fig-join3c}
\end{figure}

\begin{lemma}\label{lemma-join3}
Let $G$ be a connected graph embedded on the cylinder with distinct boundaries $C_1$ and $C_2$ such that $\ell(C_1), \ell(C_2)\le 4$.
Let $C,C'\subseteq G$ be distinct cycles of length
at most $4$ separating $C_1$ from $C_2$, such that $C$ separates $C_1$ from $C'$.  Assume that every cycle in $G$ distinct from $C$, $C'$, $C_1$ and $C_2$ has length at least $5$, and
that the distances between $C_1$ and $C$, between $C$ and $C'$, and between $C'$ and $C_2$ are at most $4$.
If $G$ is $(C_1\cup C_2)$-critical, then $G$ is isomorphic to one of the graphs drawn in
Figures~\ref{fig-join3a}, \ref{fig-join3b} and \ref{fig-join3c}.
\end{lemma}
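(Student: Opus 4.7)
The plan is to extend the strategy of Lemma~\ref{lemma-join2} to a three-piece decomposition. Let $G_1$, $G_2$ and $G_3$ be the subgraphs of $G$ drawn between $C_1$ and $C$, between $C$ and $C'$, and between $C'$ and $C_2$, respectively. By Lemma~\ref{lemma-crs} each $G_i$ is critical for its pair of bounding cycles, and since the length and distance hypotheses of Lemma~\ref{lemma-cyl-le4} transfer from $G$ to each piece, each $G_i$ must be one of the graphs in Figures~\ref{fig-44} or \ref{fig-34}. Consequently $G$ is determined up to isomorphism by the triple $(G_1,G_2,G_3)$ together with the way the cycles $C$ and $C'$ are identified between adjacent pieces.

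Next I would set up a counting inequality that constrains the admissible triples. Fix a precoloring $\varphi$ of $C_1\cup C_2$ that does not extend to a coloring of $G$. Let $\alpha$ and $\beta$ denote the numbers of $3$-colorings of $C$ and $C'$ (each in $\{6,18\}$), and let $S_1$ be the set of colorings of $C$ that, together with $\varphi|_{C_1}$, extend to $G_1$; define $S_3$ analogously for $C'$ and $G_3$. By definition of $c(\cdot,\cdot,\cdot)$, $|S_1|\ge\alpha-c(G_1,C_1,C)$ and $|S_3|\ge\beta-c(G_3,C_2,C')$. Since $\varphi$ does not extend to $G$, every pair $(\psi,\psi')\in S_1\times S_3$ must fail to extend over $G_2$, so counting failing pairs by the first coordinate gives
\[\bigl(\alpha-c(G_1,C_1,C)\bigr)\bigl(\beta-c(G_3,C_2,C')\bigr)\le \alpha\cdot c(G_2,C,C'),\]
and symmetrically with the roles of $C$ and $C'$ exchanged.

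The remainder of the proof is a finite case analysis driven by this inequality and by the tables of $c$-values already recorded after Lemmas~\ref{lemma-cyl-le4} and \ref{lemma-join2}. I would split on $\ell(C_1),\ell(C),\ell(C'),\ell(C_2)\in\{3,4\}$, corresponding exactly to the three sub-cases in Figures~\ref{fig-join3a}, \ref{fig-join3b}, \ref{fig-join3c}; within each sub-case I would symmetrize under $C_1\leftrightarrow C_2$, order the candidates for $G_1$ by decreasing $c(G_1,C_1,C)$, and for each such choice list the $G_3$'s allowed by the inequality and the $G_2$'s that can bridge them. As in Lemma~\ref{lemma-join2}, the surviving triples are further restricted by the local condition that any vertex of degree two on $C$ (or $C'$) inside one piece must have degree at least three in the adjacent piece, otherwise $G$ fails to be critical. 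The main obstacle is the sheer combinatorial size of this analysis: with three variable pieces the raw number of triples is an order of magnitude larger than in Lemma~\ref{lemma-join2}, and the counting inequality alone does not eliminate candidates in a single pass. Each surviving triple is then either matched to one of the graphs in Figures~\ref{fig-join3a}, \ref{fig-join3b}, \ref{fig-join3c}---with criticality verified directly in the style of the $T_1$ computation given after Lemma~\ref{lemma-cyl-le4}---or discarded because the assembled graph admits an extension of every precoloring of $C_1\cup C_2$.
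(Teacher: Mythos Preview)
Your three-piece decomposition and product inequality are correct, but the paper takes a different and considerably shorter route. Instead of cutting at both $C$ and $C'$, the paper cuts only at $C$: its $G_1$ agrees with yours, but its $G_2$ is the entire subgraph between $C$ and $C_2$, which contains the single separating $(\le\!4)$-cycle $C'$. That piece therefore satisfies the hypotheses of Lemma~\ref{lemma-join2}, so it is one of the graphs in Figures~\ref{fig-join2a}--\ref{fig-join2c}, and its $c$-values are already tabulated. The paper then reuses the additive inequality $c(G_1,C_1,C)+c(G_2,C_2,C)\ge|\{\text{colorings of }C\}|$ from Lemma~\ref{lemma-join2}, together with the degree-two vertex condition, exactly as before. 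It also normalizes to $\ell(C)\le\ell(C')$ and $\ell(C_1)=\ell(C_2)=4$, recovering the triangle-boundary cases at the end by suppressing degree-two vertices.

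The payoff of the paper's approach is that the case analysis is over \emph{pairs} (one factor from Figures~\ref{fig-44}--\ref{fig-34}, one from Figures~\ref{fig-join2a}--\ref{fig-join2c}) rather than triples, and the additive inequality is sharp enough that only a handful of combinations survive in each sub-case. Your multiplicative bound is valid but loose: in fact, once you fix any single $\psi\in S_1$, your own argument already gives the additive inequality $c(G_2,C,C')+c(G_3,C_2,C')\ge\beta$ at $C'$, which is exactly the Lemma~\ref{lemma-join2} inequality applied to your $G_2\cup G_3$. So your plan implicitly re-proves Lemma~\ref{lemma-join2} inside the case analysis rather than invoking it, which is why the combinatorics blow up by an order of magnitude.
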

\begin{proof}
By symmetry between $C_1,C$ and $C_2,C$, assume that $\ell(C)\le \ell(C')$.  Also, assume that $\ell(C_1)=\ell(C_2)=4$---the graphs
bounded by triangles follow by suppressing the precolored vertices of degree two.
Let $G_i$ be the subgraph of $G$ drawn between $C_i$ and $C$, for $i\in\{1,2\}$.  By Lemmas~\ref{lemma-crs}, \ref{lemma-cyl-le4} and \ref{lemma-join2},
$G_1$ is equal to one of the graphs drawn in Figures~\ref{fig-44} and \ref{fig-34}
and $G_2$ is equal to one of the graphs in Figures~\ref{fig-join2a}, \ref{fig-join2b} and \ref{fig-join2c}.

Suppose first that $\ell(C)=\ell(C')=3$.
It suffices to consider the graphs $G_1$ and $G_2$ such that $G_1$ is one of the graphs in Figure~\ref{fig-34}
and $G_2$ is one of the graphs in Figure~\ref{fig-join2a}, that is, $G_1\in\{Z_4,Z_5,O_5, O_6\}$ and $G_2\in \{D_2,D_4,D_8, D_9\}$.
Furthermore, it suffices to consider the pairs satisfying $c(G_1, C_1, C)+c(G_2,C_2,C)\ge 6$.
All critical graphs arising from these combinations are depicted in Figure~\ref{fig-join3a}.
Let us remark that combination $G_1=O_6$ and $G_2=D_2$ is the same as $Z_4D_4$ and combination $G_1=O_6$ and $G_2=D_8$ is the same as $Z_4D_9$. 

If $\ell(C)=3$ and $\ell(C')=4$, then we combine graphs $G_1$ from Figure~\ref{fig-34} with graphs $G_2$ from Figure~\ref{fig-join2c}
such that $c(G_1, C_1, C)+c(G_2,C_2,C)\ge 6$, i.e., $G_1=Z_4$ and $G_2\in\{X_1,X_3,X_4,X_5,X'_5, X_6\}$, or
$G_1\in\{Z_5, O_6\}$ and $G_2\in \{X_1,X_5, X'_5\}$.  All critical graphs arising from these combinations are depicted in Figure~\ref{fig-join3b}
(let us remark that the combination $G_1=O_6$ and $G_2=X_1$ is not critical, since the set of precolorings that extend to it is equal
to that of $D_{10}$, which is its subgraph).

Finally, if $\ell(C)=\ell(C')=4$, then we combine graphs $G_1$ from Figure~\ref{fig-44} with graphs $G_2$ from Figure~\ref{fig-join2b}
such that $c(G_1, C_1, C)+c(G_2,C_2,C)\ge 18$ and $C$ does not contain a non-precolored vertex of degree two.  Furthermore, if $G_1=Z_1$,
we can exclude from consideration the graphs such that $C$ is a cycle of non-precolored vertices of degree three, as an even cycle
of vertices of degree three cannot appear in any critical graph.  That is, for $G_1=Z_1$ we need to consider $G_2\in \{A_1, A_3, A_8, A_9, A_{13}\}$
(only $G_2=A_1$ results in a critical graph).  Almost all combinations need to be considered for $G_1=Z_3$, where
$G_2\in\{A_8, A_9,A_{11},A_{13}\}$ result in a critical graph.  Once these combinations are considered, we may assume that $G_2\not\in \{A_1,A_{11}\}$
by symmetry, since in these graphs the subgraph drawn between $C'$ and $C_2$ would be $Z_1$ or $Z_3$.
Finally, we need to consider the combinations $G_1\in\{O_2,O_3, O_4\}$ and $G_2\in\{A_5,A'_5,A_8, A_9\}$ or $G_1=T_1$ and $G_2=A_8$.
All the critical graphs obtained by these combinations are in Figure~\ref{fig-join3c}.
\end{proof}

The numbers of non-extending colorings for these graphs are

\begin{center}
\begin{tabular}{ccc|ccc}
$G$ & $c(G, B, T)$ & $c(G, T, B)$ & $G$ & $c(G, B, T)$ & $c(G, T, B)$ \\
\hline
$Z_4D_2$               & $12$ & $12$ & $Z_4X_2$            &  $3$ &  $9$ \\
$Z_4Z_4$               & $12$ &  $8$ & $Z_6X_5$, $Z_6X'_5$ &  $4$ &  $2$ \\
$Z_4D_8$               &  $6$ &  $6$ & $Z_1A_1$            &  $3$ &  $3$ \\
$Z_4Z_9$               &  $6$ &  $4$ & $Z_4A_1$            &  $3$ &  $1$ \\
$O_6D_4$               &  $8$ &  $8$ & $Z_4X_1a$           &  $1$ &  $1$ \\
$O_6D_9$               &  $4$ &  $4$ & $Z_3A_8a$           &  $4$ &  $4$ \\
$Z_4X_1b$              &  $9$ &  $9$ & $Z_3A_8b$           &  $8$ &  $8$ \\
$Z_4X_3$               &  $1$ &  $3$ & $Z_3A_9a$           &  $4$ &  $4$ \\
$Z_4X_4a$              &  $2$ &  $3$ & $Z_3A_9b$           &  $4$ &  $4$ \\
$Z_4X_4b$              &  $2$ &  $3$ & $Z_3A_{11}a$        & $12$ & $12$ \\
$Z_4X_5$, $Z_4X'_5$    &  $4$ &  $6$ & $Z_3A_{11}b$        & $12$ & $12$ \\
$Z_4X_6a$              &  $2$ &  $3$ & $Z_3A_{13}$         &  $4$ &  $4$ \\
$Z_4X_6b$              &  $2$ &  $3$ & $O_4A_8$            &  $2$ &  $2$ \\
$Z_5X_5$, $Z_5X'_5$    &  $4$ &  $6$ & $O_4A_9$            &  $2$ &  $2$ \\
\end{tabular}
\end{center}

This rather tedious case analysis concludes with the next lemma.

\begin{figure}
\begin{center}
\newcommand{\sze}{35mm}
\begin{tabular}{cccc}
\includegraphics[width=\sze]{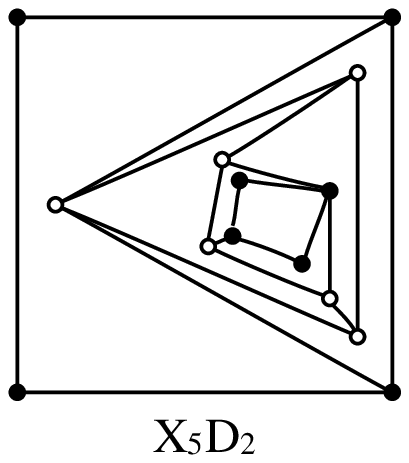}&
\includegraphics[width=\sze]{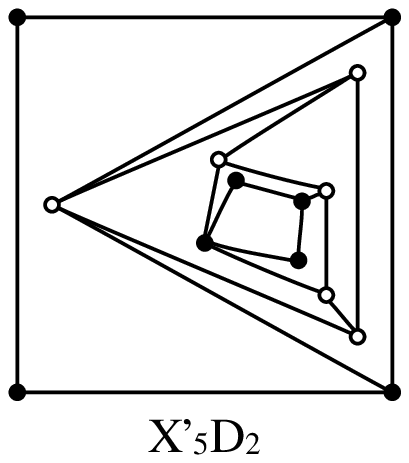}&
\includegraphics[width=\sze]{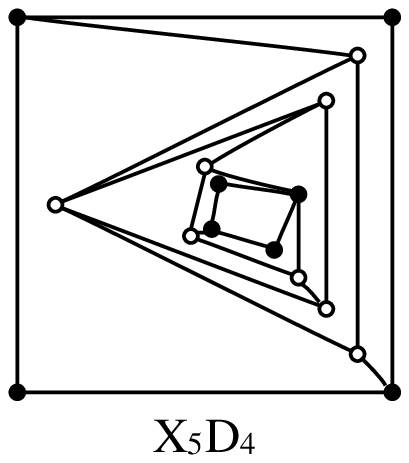}&
\includegraphics[width=\sze]{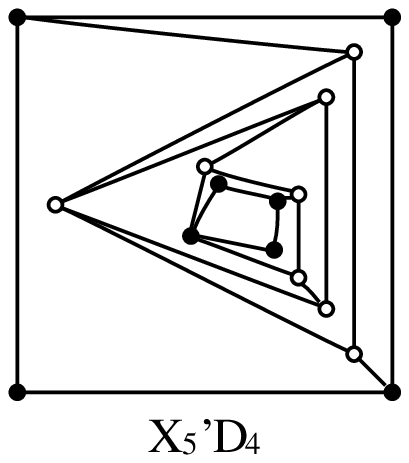}\\
\includegraphics[width=\sze]{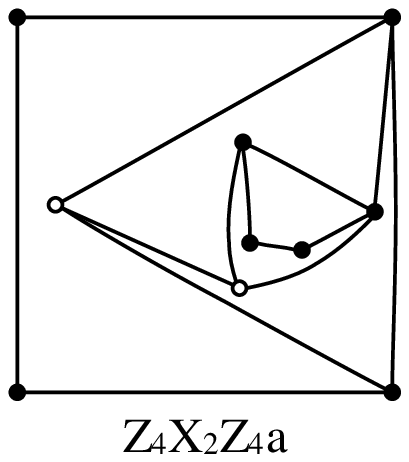}&
\includegraphics[width=\sze]{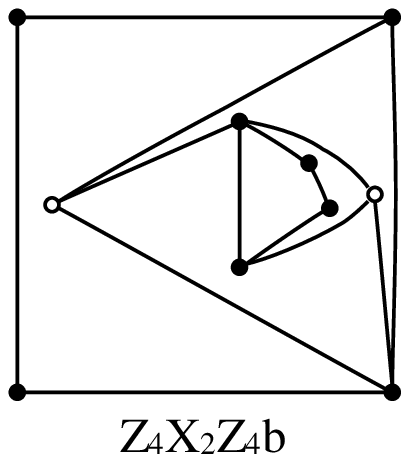}&
\includegraphics[width=\sze]{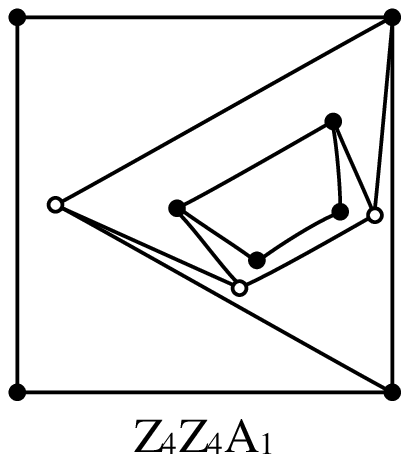}&
\includegraphics[width=\sze]{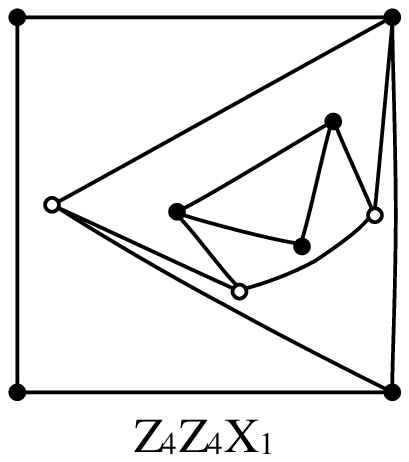}\\
\includegraphics[width=\sze]{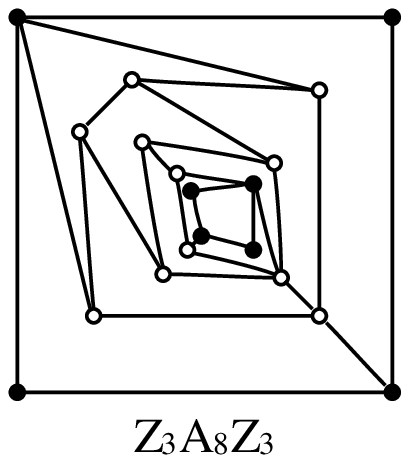}&
\includegraphics[width=\sze]{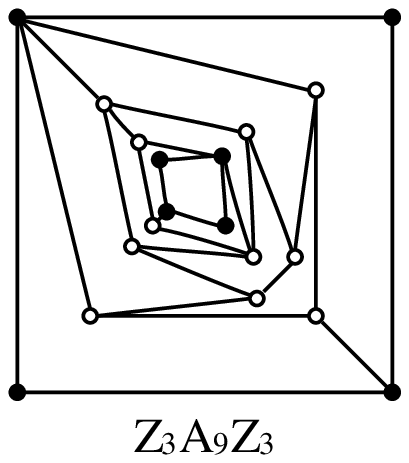}&
\includegraphics[width=\sze]{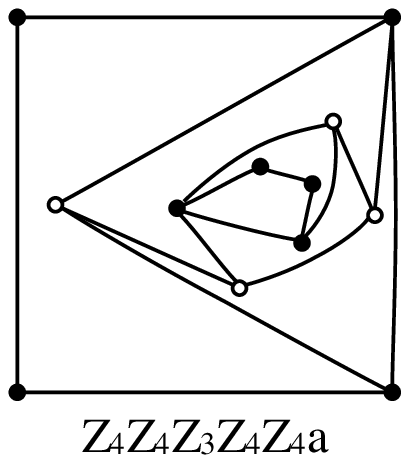}&
\includegraphics[width=\sze]{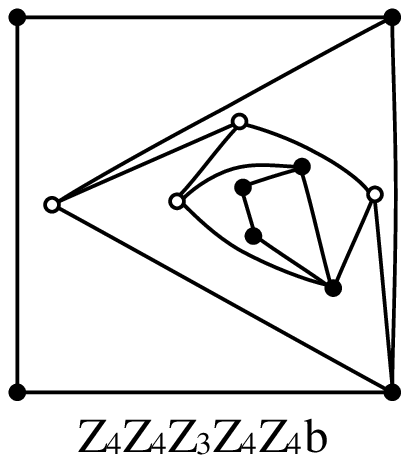}
\end{tabular}
\end{center}
\caption{Other critical graphs on the cylinder.}\label{fig-joinmany}
\end{figure}

\begin{lemma}\label{lemma-join}
Let $G$ be a connected graph embedded in the cylinder with distinct boundaries $C_1$ and $C_2$ such that $\ell(C_1), \ell(C_2)\le 4$.
Let $C_1=K_0$, $K_1$, \ldots, $K_k=C_2$ be a sequence of distinct cycles of
length at most $4$ in $G$ such that $K_i$ separates
$K_{i-1}$ from $K_{i+1}$ for $1\le i\le k-1$ and the distance between $K_i$ and $K_{i+1}$ is at most $4$ for $0\le i\le k-1$.
Assume that every cycle of length at most $4$ in $G$ is equal to $K_i$ for some $i\in\{0,\ldots, k\}$.
If $G$ is $(C_1\cup C_2)$-critical, then one of the following holds:
\begin{itemize}
\item $G$ is one of the graphs described by Lemmas~\ref{lemma-cyl-le4}, \ref{lemma-join2} or \ref{lemma-join3}, or
\item $G\in \CC$, or
\item $G$ is one of the graphs drawn in Figure~\ref{fig-joinmany}.
\end{itemize}
\end{lemma}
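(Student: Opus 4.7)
The plan is to induct on $k$. The base cases $k\in\{1,2,3\}$ are exactly Lemmas~\ref{lemma-cyl-le4}, \ref{lemma-join2}, and \ref{lemma-join3}, so I assume $k\ge 4$ and that the statement holds for all smaller values.

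For each $i\in\{1,\ldots,k\}$ let $G_i$ be the subgraph of $G$ drawn between $K_{i-1}$ and $K_i$. Applying Lemma~\ref{lemma-crs} with $T=C_1\cup C_2$ and $S=K_{i-1}\cup K_i$ shows that $G_i$ is $(K_{i-1}\cup K_i)$-critical, and Lemma~\ref{lemma-cyl-le4} then identifies $G_i$ with one of the graphs in Figures~\ref{fig-44} and~\ref{fig-34}. I next set $G'=G_1\cup\cdots\cup G_{k-1}$; another application of Lemma~\ref{lemma-crs} shows that $G'$ is $(C_1\cup K_{k-1})$-critical. Since $G'$ satisfies the hypotheses of the present lemma with the shorter sequence $K_0,\ldots,K_{k-1}$, the inductive hypothesis gives three possibilities: (a) $G'$ is described by one of Lemmas~\ref{lemma-cyl-le4}, \ref{lemma-join2}, \ref{lemma-join3}; (b) $G'\in\CC$; or (c) $G'$ appears in Figure~\ref{fig-joinmany}.

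I then carry out a case analysis on the pair $(G',G_k)$, glued along $K_{k-1}$, checking which pairs give $(C_1\cup C_2)$-critical graphs and identifying the outcome. The driving criterion is combinatorial: a precoloring $\varphi_1\cup\varphi_2$ of $C_1\cup C_2$ fails to extend to $G$ if and only if every coloring $\chi$ of $K_{k-1}$ is killed on at least one side, i.e.\ either $\varphi_1\cup\chi$ fails to extend to $G'$ or $\chi\cup\varphi_2$ fails to extend to $G_k$. This can be read off the $c$-value tables compiled after Lemmas~\ref{lemma-cyl-le4}--\ref{lemma-join3}, in the same style as the proofs of Lemmas~\ref{lemma-join2} and \ref{lemma-join3}; a necessary condition is that for some $(\varphi_1,\varphi_2)$ the bad colorings on the two sides jointly cover all $18$ (respectively $6$) colorings of $K_{k-1}$. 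When $G'$ falls into (a) or (c), the list of candidates is finite, so the casework is bounded; I expect every critical combination either to appear already in Figure~\ref{fig-joinmany} or to reduce to a graph listed in one of the earlier lemmas.

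The main obstacle is case (b), where $G'$ can have arbitrarily many layers. The key structural fact I will use is that every layer of a graph in $\CC$ is a copy of $Z_3$, and that $Z_3$ is the unique graph in Figure~\ref{fig-44} attaining $c(Z_3,B,T)=c(Z_3,T,B)=16$; consequently, starting from any fixed precoloring $\varphi_1$ of $C_1$, the set of colorings of $K_{k-1}$ extendable through $G'$ has cardinality $2$ and a rigid shape determined by the $Z_3$-stack. Plugging this rigidity into the criticality criterion will force $G_k$ to continue the pattern: either $G_k=Z_3$ (so $G$ is built from $G'$ by one further application of the $\CC$-generating operation and hence $G\in\CC$), or $G_k$ lies in a short list of alternative graphs whose composition with the $\CC$-pattern still gives a critical graph, yielding the exceptional entries of Figure~\ref{fig-joinmany}. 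This structural argument avoids having to enumerate the unboundedly many possibilities for $G'$ inside $\CC$, and it is the place where the proof is hardest and most delicate.
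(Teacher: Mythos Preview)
Your plan is viable, but it takes a genuinely different route from the paper. The paper does \emph{not} peel off one layer and induct. Instead it carries out explicit case analyses for $k=4$ and $k=5$ (in the same style as Lemmas~\ref{lemma-join2} and~\ref{lemma-join3}, using the $c$-tables and the constraint that no non-precolored vertex of degree two survives), and then for $k\ge 6$ uses a \emph{sliding-window} argument: by Lemma~\ref{lemma-crs} every block $G_i$ of five consecutive layers is one of the $k=5$ outputs, namely $Z_4Z_4Z_1Z_4Z_4a$, $Z_4Z_4Z_1Z_4Z_4b$, or a graph in $\CC$; comparing $G_i$ with the overlapping $G_{i\pm 1}$ forces all windows into $\CC$, whence $G\in\CC$. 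This sidesteps any direct analysis of the coloring behaviour of long $\CC$-stacks, which is exactly where you concentrate the effort.

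Two remarks on your sketch of case~(b). First, the assertion that for every $\varphi_1$ the set of colorings of $K_{k-1}$ extendable through $G'\in\CC$ has cardinality~$2$ is not correct as stated: already for a single $Z_3$ one checks that $c(Z_3,B,\psi,T)=16$ only when $\psi$ assigns the two non-degree-two vertices of $B$ the same colour, and $c(Z_3,B,\psi,T)=13$ otherwise. The right invariant is that a pair $(\varphi_1,\varphi_2)$ fails to extend to a $\CC$-stack precisely when the degree-two vertices of each boundary receive distinct colours; your rigidity argument should be phrased in those terms. Second, ``$G_k=Z_3$ hence $G\in\CC$'' is not automatic: there are several ways to glue $Z_3$ onto the $4$-cycle $K_{k-1}$, and only the one matching the $\CC$-generating operation (shared vertex going to a degree-two vertex of $K_{k-1}$) lands in $\CC$; you must check that the other gluings are not critical. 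Once these points are handled, your induction still needs essentially the full $k=4$ and $k=5$ casework to populate Figure~\ref{fig-joinmany}, after which for $k\ge 6$ case~(c) shrinks to $\{Z_4Z_4Z_1Z_4Z_4a,\,Z_4Z_4Z_1Z_4Z_4b\}$ and case~(b) must be shown to yield only $\CC$; so the total labour is comparable to the paper's, with the difference that the paper's overlap trick replaces your direct analysis of $\CC$-stacks.
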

\begin{proof}
By Lemmas~\ref{lemma-cyl-le4}, \ref{lemma-join2} or \ref{lemma-join3}, we may assume that $k\ge 4$.  The graphs described by Lemma~\ref{lemma-join2}
satisfy that if $\ell(C)=\ell(C')=3$, then $\ell(C_1)=\ell(C_2)=4$.  Therefore, Lemma~\ref{lemma-crs} implies that at least one of $K_i$, $K_{i+1}$ and $K_{i+2}$ has length $4$,
for $0\le i\le k-2$.  For $1\le i\le k-1$, let $P_i$ (respectively $N_i$) be the subgraphs of $G$ drawn between $K_i$ and $C_1$
(respectively $C_2$).

Suppose first that $k=4$, and assume that $\ell(C_1)=\ell(C_2)=4$.
If $\ell(K_2)=\ell(K_3)=3$, then $P_2\in \{X_1,X_3,X_4,X_5,X'_5, X_6\}$ and $N_2\in \{D_2,D_4,D_8,D_9\}$.
Furthermore, $c(P_2,C_1,K_2)+c(N_2,C_2,K_2)\ge 6$, implying that $P_2\in\{X_1,X_5, X'_5\}$ and $N_2\in\{D_2,D_4\}$.  The critical graphs arising this way are
$X_5D_2$, $X'_5D_2$, $X_5D_4$ and $X'_5D_4$.
The case that $\ell(K_1)=\ell(K_2)=3$ is symmetric.  If $\ell(K_1)=\ell(K_3)=3$, then $P_3=Z_4X_2$ and $N_1=Z_4X_2$, and thus $N_3=Z_4$.  It follows that $G\in \{Z_4X_4Z_4a,Z_4X_4Z_4b\}$.
If $\ell(K_2)=3$ and $\ell(K_1)=\ell(K_3)=4$, then $P_2,N_2\in \{X_1,X_3,X_4,X_5,X'_5, X_6\}$, and since $c(P_2,C_1,K_2)+c(N_2,C_2,K_2)\ge 6$, we conclude that $P_2=N_2=X_1$.
However, the graph obtained by combining $X_1$ with itself is not critical.
If $\ell(K_1)=3$ and $\ell(K_2)=\ell(K_3)=4$, then $N_1=Z_4A_1$ and $P_1\in\{Z_4,Z_5,O_5,O_6\}$. Since $c(P_1,C_1,K_1)+c(N_1,C_2,K_1)\ge 6$, it follows that $P_1=Z_4$
and $G=Z_4Z_4A_1$.  The case that $\ell(K_3)=3$ and $\ell(K_1)=\ell(K_2)=4$ is symmetric.

Finally, consider the case that $\ell(K_1)=\ell(K_2)=\ell(K_3)=4$.
Then $P_3$ is one of the graphs in Figure~\ref{fig-join3c}, implying that $P_1\in\{Z_1, Z_3, O_4\}$, and by symmetry, $N_3\in \{Z_1,Z_3,O_4\}$.
If $N_3\neq Z_3$, we have $c(P_3, C_1, K_3)\ge 6$, and thus $P_3\in \{Z_3A_8b, Z_3A_{11}a, Z_3A_{11}b\}$.  For all these choices of $P_3$, we
have $P_1=Z_3$.  Therefore, by symmetry we may assume $N_3=Z_3$.  The combinations of $Z_3$ with the graphs in Figure~\ref{fig-join3c}
that result in a critical graph are $Z_3A_8Z_3$, $Z_3A_9Z_3$ and the graphs belonging to $\CC$.

The only graph with $k=4$ and $\ell(C_1)\le 3$ or $\ell(C_2)\le 3$ is $Z_4Z_4X_1$, obtained by suppressing a vertex of degree two in $Z_4Z_4A_1$.
Thus, all the graphs with $k=4$ satisfy the conclusion of this lemma.

Suppose now that $k=5$.  The graphs $P_4$ and $N_1$ are among the graphs described by this lemma for $k=4$.
This implies that $P_1\in\{Z_1,Z_3, Z_4\}$.  If $\ell(K_1)=3$, then $P_1=Z_4$ and $N_1=Z_4Z_4X_1$ and
$G=Z_4Z_4Z_1Z_4Z_4a$ or $G=Z_4Z_4Z_1Z_4Z_4b$.  The case that $\ell(K_4)=3$ is symmetric.
Therefore, assume that $\ell(K_1)=\ell(K_4)=4$.  This implies that $N_1\not\in \{Z_4X_2Z_4a, Z_4X_2Z_4b\}$.
Neither $Z_1$ nor $Z_4$ can be combined with a graph from $\CC$ to form a critical graph, as the resulting graph
would contain a non-precolored vertex of degree two.  The same argument shows that if $P_1\in \{Z_1,Z_4\}$, then
$N_1\not\in \{X_5D_4,X'_5D_4,Z_3A_8Z_3, Z_3A_9Z_3\}$.  The combinations of $Z_1$ or $Z_4$ with $X_5D_2$, $X'_5D_2$. $Z_4Z_4A_1$
or $Z_4Z_4X_1$
are not critical.  We conclude that $P_1=Z_3$, and by symmetry, $N_4=Z_3$.  Since $G$ does not contain non-precolored vertices
of degree two, we have $N_1\not\in\{X_5D_2,X'_5D_2, Z_4Z_4A_1,Z_4Z_4X_1\}$.  If $N_1\in \CC$, then $G\in \CC$.
Otherwise, $N_1\in \{X_5D_4,X'_5D_4,Z_3A_8Z_3, Z_3A_9Z_3\}$.  However, the combinations of $Z_3$ with these graphs are not critical.

Therefore, we may assume that $k\ge 6$.  Let $G_i$ be the subgraph of $G$ drawn between $K_i$ and $K_{i+5}$, for $0\le i\le k-5$.
By the previous paragraph, $G_i\in \{Z_4Z_4Z_1Z_4Z_4a,Z_4Z_4Z_1Z_4Z_4b\}\cup \CC$, hence $\ell(K_i)=\ell(K_{i+5})=4$.
Furthermore, considering $G_{i-1}$ (if $i>0$) or $G_{i+1}$ (if $i<k-5$), we conclude that $\ell(K_{i+1})=4$ or $\ell(K_{i+4})=4$,
implying that $G_i\in \CC$.  This implies that $G\in \CC$.
\end{proof}

Let us remark that if $G$ is a graph in $\CC$ with $4$-faces $C_1$ and $C_2$, then $G$ is $(C_1\cup C_2)$-critical---to see this observe that that the precolorings
of $C_1$ and $C_2$ in that the vertices of $C_i$ of degree two have different colors for each $i\in\{1,2\}$ do not extend to a coloring of $G$.
Let us now point out some consequences of Lemma~\ref{lemma-join} that are useful in the proof of Theorem~\ref{thm-cyl}.

\begin{figure}
\begin{center}
\newcommand{\sze}{35mm}
\begin{tabular}{ccc}
\includegraphics[width=\sze]{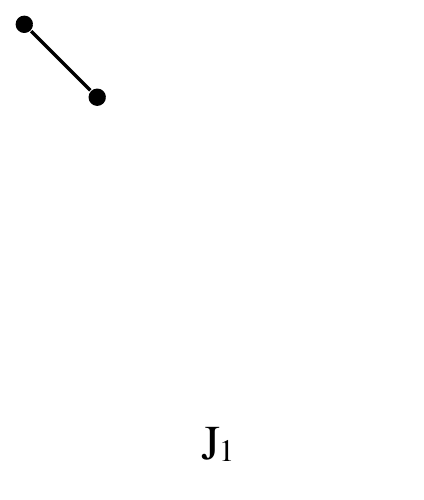}
&
\includegraphics[width=\sze]{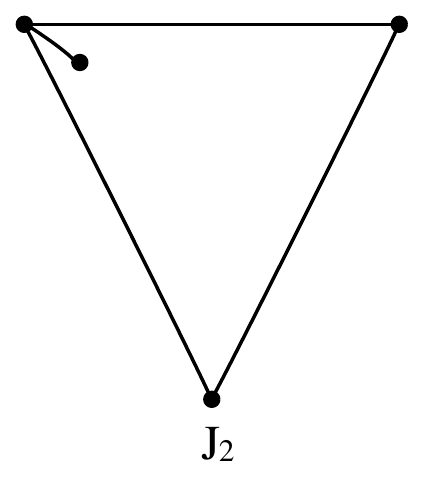}&
\includegraphics[width=\sze]{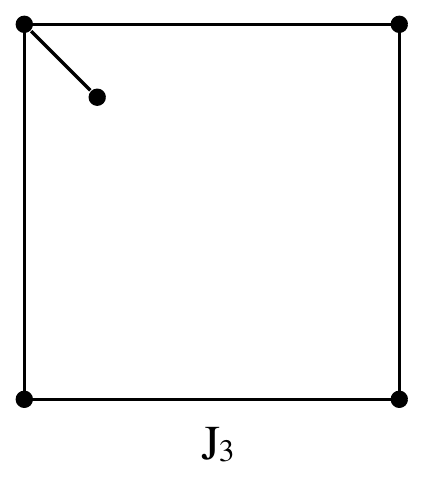}\\
\includegraphics[width=\sze]{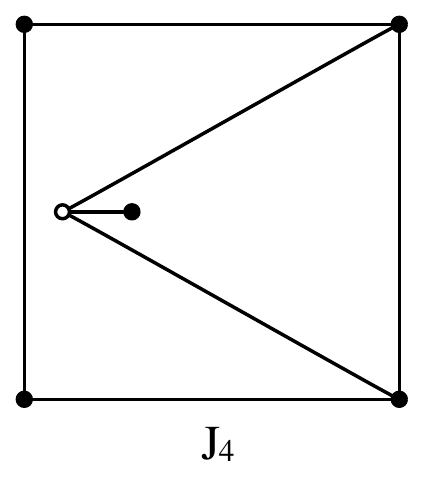}&
\includegraphics[width=\sze]{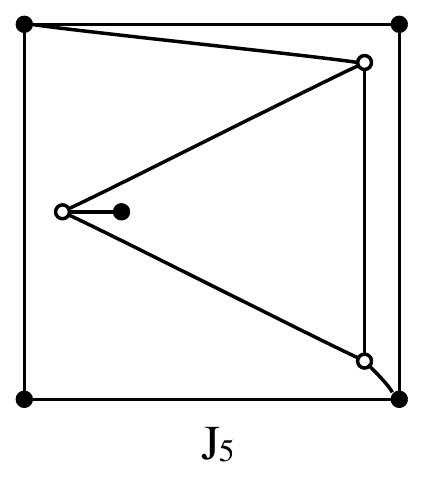}&
\end{tabular}
\end{center}
\caption{Critical graphs with a precolored vertex.}\label{fig-to1}
\end{figure}

\begin{lemma}\label{lemma-cyl-le41}
Let $G$ be a connected plane graph, $v$ a vertex of $G$ and $C\subseteq G$ either a vertex of $G$, or a cycle bounding a face of length at most $4$.
Assume that every cycle of length at most $4$ distinct from $C$ separates $v$ from $C$.  Furthermore, assume that for every two subgraphs $K_1,K_2\subseteq G$
such that $K_i\in\{v,C\}$ or $K_i$ is a cycle of length at most $4$ for $i\in\{1,2\}$, either the distance between $K_1$ and $K_2$ is at most $4$, or there
exists a cycle of length at most $4$ separating $K_1$ from $K_2$.
If $G$ is nontrivial $(v\cup C)$-critical, then $G$ is one of the graphs $J_1$, $J_2$, \ldots, $J_5$ drawn in Figure~\ref{fig-to1}.
\end{lemma}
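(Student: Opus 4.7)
The plan is to reduce Lemma~\ref{lemma-cyl-le41} to Lemma~\ref{lemma-join} by replacing each precolored single vertex with a precolored triangular face. Form $G^+$ from $G$ as follows: inside a face of $G$ incident with $v$, add two new vertices $v_1,v_2$ together with the edges $vv_1$, $vv_2$, $v_1v_2$, so that $T_v:=vv_1v_2$ is a triangle bounding a face of $G^+$. If $C$ is also a single vertex, perform the analogous construction on $C$ to obtain $T_C$ with two fresh degree-$2$ vertices; otherwise set $T_C:=C$, which is already a $(\le\!4)$-face of $G$.

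A precoloring $\psi$ of $v\cup C$ lifts to a precoloring $\psi^+$ of $T_v\cup T_C$ by giving the new vertices of each triangle the two colors different from the precolor of the corresponding original vertex, and every precoloring of $T_v\cup T_C$ arises this way, uniquely up to swapping the two new vertices in each triangle. Because the added vertices have degree $2$ and are drawn inside a face of $G$, for any subgraph $H\subseteq G$ containing $v\cup C$ the precoloring $\psi$ extends to a coloring of $H$ if and only if $\psi^+$ extends to a coloring of $H^+$, where $H^+$ is $H$ together with all added vertices and edges. Consequently $G$ is nontrivial $(v\cup C)$-critical if and only if $G^+$ is nontrivial $(T_v\cup T_C)$-critical. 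Moreover, since $v\in T_v$ and the new vertices have no neighbors outside $T_v$, the distance from $T_v$ to any subgraph of $G$ in $G^+$ equals the distance from $v$ in $G$, and analogously for $T_C$; the cycles of length at most $4$ in $G^+$ are precisely those of $G$ together with $T_v$ and, when applicable, $T_C$. Combined with the separation hypothesis of the present lemma and the nestedness of separating cycles in a plane graph, this yields a sequence $T_v=K_0,K_1,\ldots,K_k=T_C$ of $(\le\!4)$-cycles in $G^+$ in which $K_i$ separates $K_{i-1}$ from $K_{i+1}$ and the distance between consecutive cycles is at most $4$, exactly as required by Lemma~\ref{lemma-join}.

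Applying Lemma~\ref{lemma-join} to $G^+$ then reduces the problem to a finite list of candidates---the graphs of Figures~\ref{fig-44}--\ref{fig-joinmany} and the class $\CC$. I go through each candidate and retain only those in which the triangle playing the role of $T_v$ (and, when $C$ is a vertex, also the triangle playing the role of $T_C$) contains two adjacent degree-$2$ vertices whose only other neighbor is the third triangle vertex; these are exactly the triangles that can arise from the construction. Deleting the two added vertices on each surviving graph yields precisely the five graphs $J_1,\ldots,J_5$ of Figure~\ref{fig-to1}. The main obstacle is the finite but tedious last step: the vast majority of candidates must be discarded, either because no boundary triangle has the required degree pattern to be $T_v$ or $T_C$, or because after removing the two added vertices the resulting graph either loses nontrivial criticality or violates one of the hypotheses of the present lemma.
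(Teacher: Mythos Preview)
Your proposal is correct and follows essentially the same approach as the paper: attach a pendant triangle $T_v=vv'v''$ (and, if needed, $T_C$) with two new degree-$2$ vertices, observe that the resulting graph is $(T_v\cup T_C)$-critical and satisfies the hypotheses of Lemma~\ref{lemma-join}, then filter the finite list from that lemma using the conditions that one boundary is a triangle with two degree-$2$ vertices and that the two boundaries are disjoint. Your write-up is more explicit than the paper's (which is a two-sentence sketch), but the construction, the reduction, and the final inspection step are the same.
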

\begin{proof}
Let $G'$ be the graph obtained from $G$ in the following way: Add new vertices $v'$ and $v''$ and edges of the triangle $C_1=vv'v''$.
If $C$ is a single vertex, add also new vertices $c'$ and $c''$ and edges of the triangle $C_2=Cc'c''$, otherwise set $C_2=C$.
Observe that $G'$ is $(C_1\cup C_2)$-critical and satisfies assumptions of Lemma~\ref{lemma-join}.  The claim follows by the
inspection of the graphs enumerated by Lemma~\ref{lemma-join}, using the fact that $C_1$ and $C_2$ are disjoint,
$\ell(C_1)=3$ and $v'$ and $v''$ have degree two.
\end{proof}

The following claims follow by a straightforward inspection of the graphs listed in Lemmas~\ref{lemma-join}
and \ref{lemma-cyl-le41}:

\begin{corollary}\label{cor-cyl-special}
Let $G$ be a connected plane graph and $C_1$ and $C_2$ distinct subgraphs of
$G$ such that $C_i$ is either a single vertex or a cycle of length at most $4$
bounding a face, for $i\in\{1,2\}$.  Assume that $G$ is nontrivial $(C_1\cup C_2)$-critical
and that every cycle of length at most
$4$ distinct from $C$ separates $C_1$ from $C_2$.  Furthermore, assume that for
every two subgraphs $K_1,K_2\subseteq G$ such that $K_i\in\{C_1,C_2\}$ or $K_i$
is a cycle of length at most $4$ for $i\in\{1,2\}$, either the distance between
$K_1$ and $K_2$ is at most $4$, or there exists a cycle of length at most $4$
separating $K_1$ from $K_2$.
\begin{itemize}
\item[(a)] If the distance between $C_1$ and $C_2$ is at least three and $G$ has a face of length at least $7$, then
\\ $G\in \{D_9,D_{10},A_{12}, A'_{12}, Z_4D_8, Z_4D_9, O_6D_9, Z_5X_5, Z_5X'_5\}$.
\item[(b)] If the distance between $C_1$ and $C_2$ is at least three and all cycles of length at most $4$ in $G$
distinct from $C_1$ and $C_2$ intersect in a non-precolored vertex, then\\
$G\in \{R, J_5, D_9, D_{10}, A_{10}, A_{12}, A'_{12}, Z_4D_4, O_6D_4, Z_4X_4b, Z_3A_9b, O_4A_9\}$.
\item[(c)] If the distance between $C_1$ and $C_2$ is at least three and all cycles of length at most $4$ in $G$
distinct from $C_1$ and $C_2$ intersect in a precolored vertex, then
$G\in \{R, A_{12}, A'_{12}, Z_4X_4b\}$.
\item[(d)] If the distance between $C_1$ and $C_2$ is at least two and $G$ has a face of length at least $9$,
then $G\in\{D_6,D_{10}\}$.
\item[(e)] If the distance between $C_1$ and $C_2$ is at least two and $G$ is not $2$-edge-connected,
then $G\in\{J_4, J_5, D_6,D_8,D_9,D_{10}, Z_4D_8, Z_4D_9, O_6D_9\}$.
\item[(f)] If the distance between $C_1$ and $C_2$ is at least two, $G$ has a face $M'$ of length at least $7$,
and there exists an edge $e$, a vertex $x$ and a face $M\neq M'$ distinct from $C_1$ and $C_2$ such that
\begin{itemize}
\item $M$ and $M'$ share the edge $e$, and $x$ is incident with $M$,
\item every path of length two between $C_1$ and $C_2$ contains the edge $e$, and
\item every path of length at most $4$ between $C_1$ and $C_2$ contains $e$ or $x$ or both, and
\item every cycle of length at most $4$ distinct from $C_1$ and $C_2$ contains $e$ or $x$ or both,
\end{itemize}
then $G\in\{D_9,D_{10},A_7, A_{12}, A'_{12}\}$.
\end{itemize}
\end{corollary}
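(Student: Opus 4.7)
The plan is to prove each of (a)--(f) by straightforward case analysis on the explicit finite catalogue produced by Lemmas~\ref{lemma-join} and \ref{lemma-cyl-le41}. Under the common hypotheses of the corollary, $G$ is a nontrivial $(C_1\cup C_2)$-critical connected plane graph in which every cycle of length at most $4$ other than $C_1,C_2$ is separating, and any two small cycles are either joined by a path of length at most $4$ or have another small cycle between them. If both $C_i$ bound faces, Lemma~\ref{lemma-join} applies directly; if one or both of the $C_i$ is a single vertex, we reduce to the face case by the same doubling trick used in the proof of Lemma~\ref{lemma-cyl-le41} (expand each precolored vertex into the apex of a new triangle and apply Lemma~\ref{lemma-join}, then recognise the extra vertices as those of degree two in the resulting catalogue). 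Thus $G$ lies in the union of the graphs of Figures~\ref{fig-44}, \ref{fig-34}, \ref{fig-join2a}--\ref{fig-join2c}, \ref{fig-join3a}--\ref{fig-join3c}, \ref{fig-joinmany}, \ref{fig-to1} and the class $\CC$.

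The class $\CC$ can be dispatched uniformly at the start. Every graph in $\CC$ is $2$-edge-connected, has all faces distinct from the two distinguished $4$-faces of length exactly $5$, and contains a linear sequence of internally disjoint separating $4$-cycles no two of which share a common vertex. Consequently any hypothesis requiring a face of length $\ge 7$ (parts (a), (d), (f)), or a failure of $2$-edge-connectedness (part (e)), or all small cycles sharing a common vertex (parts (b), (c)), rules out $\CC$ immediately. So in every part of the corollary we need only examine the named graphs in the figures.

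For each of (a)--(f) I would then walk through the remaining named graphs, checking for each a small list of combinatorial items: (i) the distance between $C_1$ and $C_2$ in the drawing; (ii) the face lengths; (iii) $2$-edge-connectivity; (iv) which small cycles other than $C_1,C_2$ exist and whether they share a (precolored or non-precolored) common vertex; and, for (f), whether there is an edge $e$ through which every length-two $C_1$--$C_2$ path must pass and a vertex $x$ with the further separation properties. To keep the accounting manageable I would tabulate items (i)--(iv) once for every graph in the catalogue and then read off the answers for each of the six parts; the stated conclusions are exactly the graphs passing every check in the corresponding row.

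The main obstacle is not mathematical but combinatorial: the catalogue runs to several dozen named graphs, and each has to be examined under each of the six sets of side conditions. There is no single hard case, but completeness requires that no graph be overlooked, which is why the authors note that the check was performed both by hand and by computer. In writing the proof I would therefore prioritise organising the case analysis (a single table, with one row per graph and one column per part) over presenting each check in prose, and verify the tricky conditions of (f) separately, since that part requires identifying the edge $e$ and vertex $x$ explicitly in each candidate graph.
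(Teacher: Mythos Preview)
Your proposal is correct and follows essentially the same approach as the paper: the authors simply state that the corollary ``follows by a straightforward inspection of the graphs listed in Lemmas~\ref{lemma-join} and \ref{lemma-cyl-le41}'', which is precisely the catalogue-plus-case-check you describe (including the reduction of the single-vertex case to the triangle case via Lemma~\ref{lemma-cyl-le41}). Your write-up is in fact more detailed than the paper's own treatment, which omits the table and the explicit handling of $\CC$.
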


Aksenov~\cite{aksen} proved that every planar graph with at most three triangles is $3$-colorable.
Let us note that the result was recently reproved with a significantly simpler proof~\cite{aksenNew}
and the description of planar graphs with $4$ triangles that are not $3$-colorable is known~\cite{fourTriangles}.
In the origianl proof, Aksenov showed
that for any plane graph $G$, if a face $B$ of length at most $4$ is precolored and $G$ contains at most one triangle distinct from $F$, then
the precoloring of $F$ extends.
\begin{theorem}[Aksenov~\cite{aksen}]\label{thm-aksen}
Let $G$ be a plane graph with the outer face $B$ of length at most $4$.  If $G$ is nontrivial $B$-critical, then $G$ contains at least two triangles
distinct from $B$.
\end{theorem}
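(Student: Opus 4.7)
My plan is to prove the contrapositive: if $G$ is a plane graph with the outer face $B$ of length at most $4$ and $G$ contains at most one triangle distinct from $B$, then every precoloring $\psi$ of $B$ extends to a proper $3$-coloring of $G$. I proceed by induction on $|V(G)|$. Standard preliminary reductions let me assume $G$ is $2$-connected, every vertex outside $V(B)$ has degree at least $3$, and no non-facial cycle of length at most $4$ separates two non-boundary vertices; for each such separation one uses the induction hypothesis on each piece after precoloring the common cycle, keeping the (at most one) extra triangle on a single side so the triangle budget is respected.

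If $\ell(B)=3$, the argument reduces to Gr\"otzsch--Thomassen-style extension. When $G$ has no other triangle, this is the classical statement that every precoloring of a triangular face in a triangle-free planar graph extends, provable via a minimum counterexample of minimum degree at least $3$ together with a discharging argument. When $G$ has a second triangle $T$, Corollary~\ref{cor-cycles}-type arguments combined with the connectivity reductions above force $T$ to bound a face, so $\psi$ extends to $B\cup T$, after which the residual instance has two precolored $3$-faces and is triangle-free, reducing to the previous subcase.

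If $\ell(B)=4$, write $B=v_1v_2v_3v_4$. After a permutation of colors we may assume $\psi(v_1)=\psi(v_3)$, and I identify $v_1$ with $v_3$ into a single vertex $w$; the result is a plane graph $G'$ whose outer boundary is either a precolored edge $wv_2$ (when $\psi(v_2)=\psi(v_4)$) or a precolored triangle $wv_2v_4$ (when $\psi(v_2)\ne\psi(v_4)$). Since the only new triangles in $G'$ correspond to common neighbors of $v_1$ and $v_3$ in $G$, if $v_1$ and $v_3$ have no common neighbor, $G'$ has at most one triangle besides its outer face, and the induction hypothesis applied to $G'$ yields the desired extension of $\psi$ to $G$.

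The main obstacle is controlling common neighbors of $v_1$ and $v_3$. Each such common neighbor $x$ produces a $4$-cycle $v_1xv_3v_2$ or $v_1xv_3v_4$, which by our separating-cycle reduction must bound a face, so there are at most two such $x$'s and they force an explicit local structure around $B$. I expect the heart of the proof to be a careful case analysis of these configurations: either the identification can be performed on the \emph{other} pair $\{v_2,v_4\}$ without creating too many triangles, or the local structure near $x$ is rigid enough that one recolors $x$ (using the unique non-$B$ triangle $T$, if present, as a color-changing gadget) to unlock an extension of $\psi$ directly, yielding a contradiction with the assumption that $\psi$ does not extend.
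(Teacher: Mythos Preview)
The paper does not prove this statement at all: Theorem~\ref{thm-aksen} is quoted from Aksenov~\cite{aksen} (with the remark that a short reproof appears in~\cite{aksenNew}) and is used as a black box to derive Lemma~\ref{lemma-conn}. So there is no ``paper's own proof'' to compare against; what you have written is an attempt to supply one.

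As a plan your outline is reasonable in spirit, but there is a concrete error in the $\ell(B)=4$ case that undermines the induction. You assert that ``the only new triangles in $G'$ correspond to common neighbors of $v_1$ and $v_3$ in $G$''. This is false: a common neighbor $x$ of $v_1$ and $v_3$ produces \emph{parallel edges} $wx$ after identification, not a triangle. A genuinely new triangle $wab$ in $G'$ arises instead from a path $v_1\,a\,b\,v_3$ of length three in $G$ (with $\{a,b\}\cap\{v_2,v_4\}=\emptyset$), i.e.\ from a $5$-cycle through $v_1$ and $v_3$. Such paths are not controlled by your separating $(\le 4)$-cycle reduction, and there is no a priori bound on how many of them exist, so the identification can push you far over the one-triangle budget. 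Your subsequent paragraph (``Each such common neighbor $x$ produces a $4$-cycle $v_1xv_3v_2$\ldots'') analyses the wrong objects; the $4$-cycles you discuss become digons in $G'$, not triangles, and ruling out common neighbors does nothing to limit the new triangles that actually appear.

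There are two further loose ends. First, in the $\ell(B)=3$ case with one extra triangle $T$, you reduce to ``two precolored $3$-faces and triangle-free'', but your ``previous subcase'' only handled a \emph{single} precolored triangle; extending a precoloring of two triangles is a genuinely different (cylinder-type) statement and cannot be invoked as already proved. Second, your description of the outer boundary of $G'$ (an edge $wv_2$ or a triangle $wv_2v_4$) is not what the identification through the outer face actually produces; you need either to add the chord $v_2v_4$ when $\psi(v_2)\neq\psi(v_4)$, or to perform a second identification when $\psi(v_2)=\psi(v_4)$, and in the latter case you must again control the new short cycles that this second identification creates. Until the length-$3$ paths between $v_1$ and $v_3$ are handled, the inductive step does not go through.
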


The next lemma (following from Theorem~\ref{thm-aksen}) enables us to consider only connected graphs in the proof of Theorem~\ref{thm-cyl}:
\begin{lemma}\label{lemma-conn}
Let $G$ be a plane graph and $C_1$ and $C_2$ distinct subgraphs of $G$ such that $C_i$ is either a single vertex or a cycle of length at most $4$
bounding a face, for $i\in\{1,2\}$.  Assume that every cycle in $G$ of length at most $4$ separates $C_1$ from $C_2$.
If $G$ is nontrivial $(C_1\cup C_2)$-critical, then $G$ is connected.
\end{lemma}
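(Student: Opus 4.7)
The plan is to argue by contradiction: assume $G$ is disconnected. First, every component of $G$ must meet $C_1$ or $C_2$: if a component $K$ meets neither, then any cycle in $K$ fails to separate $C_1$ from $C_2$ (both lie in a single face of $K$), so by hypothesis $K$ has girth at least $5$ and is $3$-colorable by Gr\"otzsch's theorem. Removing any edge of $K$ then leaves the set of precolorings of $C_1\cup C_2$ that extend to $G$ unchanged, contradicting the $(C_1\cup C_2)$-criticality of $G$ at that edge. The same edge-removal argument also rules out the case where $C_1$ and $C_2$ share a component but there is a second component, and handles possible isolated vertices. So we may assume $G=H_1\cup H_2$ with $C_i\subseteq H_i$ disjoint. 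Since $C_1\cup C_2$ alone is trivially $3$-colorable, criticality forces, say, $H_1\neq C_1$, and by Lemma~\ref{lemma-crs} applied with $S=C_1$, the graph $H_1$ is nontrivial $C_1$-critical.

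Re-embedding $H_1$ so that $C_1$ bounds the outer face (attaching an auxiliary triangle through $C_1$ first if $C_1$ is a single vertex, to put us in the standard form of Theorem~\ref{thm-aksen}), Aksenov's theorem produces at least two triangles $\Delta_1,\Delta_2\subseteq H_1$ distinct from $C_1$. Each is a cycle of length $3$ in $G$ distinct from both $C_1$ and $C_2$ (the latter because $\Delta_i\subseteq H_1$ is disjoint from $H_2\ni C_2$), so by the separation hypothesis each $\Delta_i$ separates $C_1$ from $C_2$ in the given plane embedding of $G$. Letting $F^{*}$ be the face of $H_1$ that contains $H_2$, this separation forces $F^{*}$ to lie entirely on one side of $\Delta_i$ (its \emph{$F^{*}$-side}), while $C_1$ must have a vertex strictly on the other.

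The heart of the argument is an iterative minimality argument. Pick $\Delta_0$ among triangles of $H_1$ distinct from $C_1$ whose $F^{*}$-side is as small as possible (enlarging $S$ by any $C_1$-vertices that happen to fall inside this region so that the subgraph $G_0$ of $H_1$ in the closed $F^{*}$-side of $\Delta_0$ forms a valid $S$-component). By Lemma~\ref{lemma-crs}, $G_0$ is $S$-critical; if $G_0$ were nontrivial, Theorem~\ref{thm-aksen} would produce inside $G_0$ another triangle distinct from $\Delta_0$ which, by the separation hypothesis, encloses $F^{*}$ with a strictly smaller $F^{*}$-side, contradicting minimality. Hence $G_0=\Delta_0$, and $F^{*}$ is a triangular face of $H_1$ bounded by $\Delta_0$. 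The second Aksenov triangle $\Delta_2\neq\Delta_0$ then has $F^{*}$-side strictly containing that of $\Delta_0$; considering the $\Delta_2$-component (which properly contains $\Delta_0$, hence is nontrivial) and reapplying Theorem~\ref{thm-aksen} furnishes a further triangle $\tau\neq\Delta_0,\Delta_2$ whose $F^{*}$-side is strictly between those of $\Delta_0$ and $\Delta_2$. Iterating on $\tau$ yields an infinite strictly nested chain of distinct triangles in $H_1$, contradicting finiteness. The chief obstacle in executing the plan is the careful bookkeeping of the $F^{*}$-sides across the nested triangles and the verification that at each step the chosen subgraph is a valid $S$-component for the correct $S$ and satisfies the outer-face-length hypothesis of Aksenov, together with the minor adjustment needed when $C_1$ is a single vertex rather than a cycle.
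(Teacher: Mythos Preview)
Your main argument (the case $G=H_1\cup H_2$ with $H_1\neq C_1$, and the ensuing nested-triangle descent) is correct and uses the same key tool as the paper, namely Aksenov's theorem. The paper reaches the contradiction more directly: rather than an infinite descent, it simply asserts that there is a cycle $K$ of length at most $4$ and a nontrivial $K$-component $G'$ containing at most one triangle distinct from $K$, and then invokes Theorem~\ref{thm-aksen} once. In your language this $K$ is the triangle with \emph{second}-smallest $F^{*}$-side, so that $G'=G_K$ contains only $\Delta_0$ besides $K$; no iteration is needed. Your worry about ``enlarging $S$ by $C_1$-vertices'' is also unnecessary: since $C_1$ bounds a face and that face lies on the non-$F^{*}$-side of any separating triangle $\Delta$, the entire cycle $C_1$ lies in the closed non-$F^{*}$-side, so $C_1\cap G_\Delta\subseteq\Delta$ automatically and $G_\Delta$ is a genuine $\Delta$-component.

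There is, however, a real gap in your opening reduction. You claim that if a component $K$ meets neither $C_1$ nor $C_2$ then ``both lie in a single face of $K$'', hence no cycle of $K$ separates them, hence $K$ has girth at least $5$. The parenthetical is not justified: nothing prevents $H_1$ (and hence $C_1$) from lying in one face of $K$ while $H_2$ lies in another, in which case cycles of $K$ \emph{can} separate $C_1$ from $C_2$, and $K$ may contain triangles consistent with the hypothesis. The repair is short: such a $K$ is an $\emptyset$-component of $G$, hence $4$-critical by Lemma~\ref{lemma-crs}; now your own nested-triangle argument, applied verbatim to $K$ in place of $H_1$ (taking $F^{*}$ to be the face of $K$ containing $C_2$, say), yields the contradiction. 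The paper's one-line formulation --- choose a $(\le 4)$-cycle $K$ and a nontrivial $K$-component with at most one triangle distinct from $K$ --- handles all disconnection patterns uniformly and sidesteps this case distinction.
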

\begin{proof}
We may assume that $C_1$ and $C_2$ are faces, since otherwise we can add a new
cycle of length three to $G$ to replace $C_1$ or $C_2$ if they were single
vertices.
If $G$ were not connected, then there would exist a cycle $K$ of length at most
$4$ and a nontrivial $K$-component $G'$ of $G$
such that $G'$ contains at most one triangle distinct from $K$.  By Lemma~\ref{lemma-crs}, $G'$ is $K$-critical, contradicting
Theorem~\ref{thm-aksen}.
\end{proof}

The following lemma finishes the proof of Theorem~\ref{thm-cyl}:
\begin{lemma}\label{lemma-cyl-ge5}
Let $G$ be a connected plane graph and $C_1$ and $C_2$ distinct subgraphs of $G$ such that $C_i$ is either a single vertex or a cycle of length at most $4$
bounding a face, for $i\in\{1,2\}$. Assume that every cycle in $G$ distinct from $C_1$ and $C_2$ has length at least $5$.
If $G$ is $(C_1\cup C_2)$-critical, then the distance between $C_1$ and $C_2$ is at most $4$.
\end{lemma}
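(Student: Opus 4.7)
The plan is to argue by induction on $|V(G)|+|E(G)|$; suppose $G$ is a minimum counterexample. By Lemma~\ref{lemma-conn}, $G$ is connected, and by attaching a pendant triangle at any $C_i$ that is a single vertex (and extending the precoloring arbitrarily) I may assume that both $C_1$ and $C_2$ are cycles of length $3$ or $4$ bounding faces of $G$. Let $P=p_0p_1\ldots p_d$ be a shortest path between $C_1$ and $C_2$; by assumption $d\ge 5$, and by Lemma~\ref{lemma-crs} the graph $G$ is $(C_1\cup C_2\cup P)$-critical.

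The strategy is to cut $G$ along $P$, duplicating its internal vertices and edges, to obtain a plane graph $G^\star$ of girth at least $5$ whose outer face $B$ is a cycle of length $\ell(C_1)+\ell(C_2)+2d$ and which is $B$-critical. Apply Theorem~\ref{thm-struct} to $G^\star$ to produce one of the configurations (a)--(d), and then translate this back into a reducible configuration in $G$. For each type, perform the corresponding reduction---cutting along a shortcut of length at most $4$, suppressing a pair of adjacent degree-two vertices, the $J$-reduction of case~(c), or the $X$-reduction of case~(d)---to obtain a plane graph $H$ strictly smaller than $G$, still containing $C_1$ and $C_2$, and with the property that every precoloring of $C_1\cup C_2$ that does not extend to $G$ also fails to extend to $H$. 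By Lemma~\ref{lemma-crs}, there is a $(C_1\cup C_2)$-critical subgraph $H'\subseteq H$.

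I would then split the argument according to whether $H'$ has introduced new short cycles. If every cycle of length at most $4$ in $H'$ equals $C_1$ or $C_2$, then $H'$ satisfies the hypotheses of the present lemma and the inductive hypothesis gives distance at most $4$ in $H'$ between $C_1$ and $C_2$. Since each reduction decreases this distance by at most a small additive constant (and only when the configuration touches $C_1$ or $C_2$), we either contradict $d\ge 5$ directly or reach a graph classified by Lemma~\ref{lemma-cyl-le4}, and I check that none of its entries can arise by any of the four reductions from an admissible $G$. If instead $H'$ contains new cycles of length at most $4$, I cut $H'$ along them and apply Lemma~\ref{lemma-join} (together with the special cases Lemmas~\ref{lemma-join2} and~\ref{lemma-join3}) to describe the resulting pieces; using the tabulated values of $c(\cdot,B,T)$ in those lemmas I then show that every precoloring of $C_1\cup C_2$ extends to $H'$, contradicting its criticality, hence that of $G$.

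The main obstacle is the interaction of the reducible configuration with the path $P$ and with $C_1,C_2$. A shortcut or $4$-chord provided by Theorem~\ref{thm-struct} in $G^\star$ may have endpoints on the two copies of $P$, and one must check that its lift to $G$ is an honest subgraph that does not shorten the $C_1$-to-$C_2$ distance below $d$: the minimality of $P$ rules out the cases where both endpoints lie on $P$, while the remaining boundary situations, where the configuration abuts $C_1$ or $C_2$, are dispatched by the specifically tailored Corollary~\ref{cor-cyl-special}. Once these boundary cases are handled, the induction closes.
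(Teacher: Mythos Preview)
Your proposal has a genuine gap at the step ``translate this back into a reducible configuration in $G$''. The configurations (a)--(d) of Theorem~\ref{thm-struct} live in the disk graph $G^\star$, whose boundary $B$ consists of $C_1$, $C_2$, and \emph{two copies} of the path $P$. A configuration produced by Theorem~\ref{thm-struct} will generically sit on a copy of $P$, and there is no mechanism to convert it into a reduction of the cylinder graph $G$. Concretely: in case~(b) the two adjacent degree-$2$ vertices on $B$ may be $p_i',p_{i+1}'$ on one copy of $P$; back in $G$ the vertices $p_i,p_{i+1}$ have degree at least $3$ (they are not precolored), so there is nothing to suppress. In cases~(c) and~(d) the vertices you would delete from $B$ correspond in $G$ to internal vertices that still have neighbours on the other side of $P$, so the $J$- or $X$-reduction does not yield a smaller cylinder instance. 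Even in case~(a), a shortcut in $G^\star$ between vertices on different copies of $P$ becomes in $G$ a noncontractible cycle, and cutting along it produces a disk, not a smaller cylinder. Your assertion that ``each reduction decreases this distance by at most a small additive constant (and only when the configuration touches $C_1$ or $C_2$)'' is thus unjustified: most of the time the configuration touches neither $C_1$ nor $C_2$, and you have no reduction of $G$ at all.

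The paper's proof is entirely different: it is a discharging argument carried out directly on the minimal counterexample $G$, without cutting along $P$. Through a sequence of structural claims (each proved by an ad hoc identification/reduction and an appeal to Corollary~\ref{cor-cyl-special} or the minimality of $G$) one shows that $G$ is $2$-connected, has no two adjacent degree-$2$ vertices, has no face of length $\ge 6$ other than $C_1,C_2$, and that no $5$-face is incident with four or more light (non-precolored degree-$3$) vertices. One then assigns charge $\deg(v)-4$ to vertices and $\ell(F)-4$ to faces, sends $1/3$ from each $5$-face to each incident light vertex, and obtains a contradiction with Euler's formula after a careful analysis of the charge near $C_1$ and $C_2$. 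The reducible-configuration viewpoint you propose is the right tool for the disk problem (Theorem~\ref{thm-alg}), but for the cylinder the paper needs the discharging machinery instead.
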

\begin{proof}
Suppose for a contradiction that $G$ is a smallest counterexample to this claim, i.e., the distance between $C_1$ and $C_2$ is at least $5$,
and if $H$ is a graph satisfying the assumptions of Lemma~\ref{lemma-cyl-ge5} with $|E(H)|<|E(G)|$, then the distance between its precolored
cycles is at most $4$.  For future references, let us note that
\claim{cl-far}{the distance between $C_1$ and $C_2$ in $G$ is at least $5$,}
and
\claim{cl-cyc}{all cycles distinct from $C_1$ and $C_2$ in $G$ have length at least $5$.}
Let us now show some properties of $G$.

\claim{cl-2conn}{$G$ is $2$-connected.}
\begin{proof}
Suppose that $v$ is a cut-vertex in $G$.  Since $G$ is $(C_1\cup C_2)$-critical, Gr\"otzsch's theorem implies that $v$ separates
$C_1$ from $C_2$.  Let $G_1$ and $G_2$ be induced subgraphs of $G$ such that $G=G_1\cup G_2$, $V(G_1)\cap V(G_2)=\{v\}$,
$C_1\subseteq G_1$ and $C_2\subseteq G_2$.  By Lemma~\ref{lemma-crs}, $G_i$ is $(C_i\cup v)$-critical, for $i\in\{1,2\}$.
Furthermore, $|E(G_i)|<|E(G)|$, thus the distance between $C_i$ and $v$ is at most $4$.  By (\ref{cl-far}) and symmetry,
we may assume that the distance between $C_1$ and $v$ is at least three.
However, since $G_1$ does not contain a cycle of length at most $4$ distinct from $C_1$,
this contradicts Lemma~\ref{lemma-cyl-le41}.
\end{proof}

\claim{cl-no22}{No two vertices of degree two in $G$ are adjacent.}
\begin{proof}
Suppose that vertices $v_1$ and $v_2$ of degree two in $G$ are adjacent.  Since $G$ is critical, both $v_1$ and $v_2$ are precolored,
and by symmetry, we may assume that they belong to $C_1$.  Since $G$ is $2$-connected, it follows that $\ell(C_1)=4$.
Let $C_1=v_1v_2v_3v_4$.  By (\ref{cl-cyc}), $v_3v_4$ and $v_3v_2v_1v_4$
are the only paths of length at most three between $v_3$ and $v_4$.  It follows that the graph $G'$ obtained from
$G$ by identifying $v_1$ with $v_2$ to a new vertex $v$ does not contain a cycle of length at most $4$ distinct
from $vv_3v_4$ and $C_2$.  Furthermore, observe that $G'$ is $(vv_3v_4\cup C_2)$-critical,
$|E(G')|<|E(G)|$ and the distance between $vv_3v_4$ and $C_2$ is at least $5$.  This contradicts the minimality of $G$.
\end{proof}

Let us fix a precoloring $\varphi$ of $C_1\cup C_2$ that does not extend to a coloring of $G$.  By the minimality of $G$,
$\varphi$ extends to every proper subgraph of $G$ that contains $C_1\cup C_2$.

\claim{cl-samecol}{Let $v_1v_2v_3$ be a path in $C_1\cup C_2$ such that $v_2$ has degree two and is incident with
a face of length $5$.  Then $\varphi(v_1)=\varphi(v_3)$.}
\begin{proof}
By symmetry, assume that $v_1v_2v_3\subseteq C_1$.  Since $v_2$ is incident with a $5$-face and no cycle in $G$ distinct
from $C_1$ and $C_2$ has length $4$, we conclude that $\ell(C_1)=4$.  Let $C_1=v_1v_2v_3v_4$.
Suppose for a contradiction that $\varphi(v_1)\neq \varphi(v_3)$.
Let $v_1v_2v_3xy$ be a $5$-face, and let $G'$ be the graph obtained from $G-v_2$ by identifying $v_1$ and $x$ to a new vertex $z$.
Let $C'_1=v_3zv_4$.  Note that the precoloring of $C'_1\cup C_2$ given by
$\varphi$ does not extend to a coloring of $G'$, thus $G'$ contains
a nontrivial $(C'_1\cup C_2)$-critical subgraph $G''$ such that $\varphi$ does not extend to a coloring of $G''$.  The distance between $C'_1$ and $C_2$ is at least $4$.
Observe that $G''$ contains a cycle $C$ of length at most $4$ distinct from $C'_1$ and $C_2$, as otherwise we would obtain a contradiction
with Lemmas~\ref{lemma-cyl-le4} or \ref{lemma-cyl-le41} or with the minimality of $G$.

Since $C$ does not exist in $G$, we have $z\in V(C)$.  Let $K$ be a cycle in $G$ induced by $(V(C) \setminus \{z\}) \cup \{v_1yx\}$.
Note that $V(K)$ indeed induces a cycle since it cannot have any chords.
Moreover, $K$ does not bound a face, since $y$ has degree at least three.  
Suppose that the exterior of $K$ contains both $C_1$ and $C_2$.
Corollary~\ref{cor-cycles} applied on $K$ and its nonempty interior 
contradicts the criticality of $G$.
Hence $K$ separates
$C_1$ from $C_2$, and thus $C$ separates $C'_1$ from $C_2$ in $G''$.  Choose $C$ among the cycles of length at most $4$ in $G''$
distinct from $C'_1$ and $C_2$ so that the subgraph $G''_2\subseteq G''$ drawn between $C$ and $C_2$ is as small as possible.
This implies that all cycles in $G''_2$ distinct from $C$ and $C_2$ have length at least $5$.  By Lemma~\ref{lemma-crs}, $G''_2$
is $(C\cup C_2)$-critical, and by the minimality of $G$, the distance between $C$ and $C_2$ is at most $4$.
Lemmas~\ref{lemma-cyl-le4} and \ref{lemma-cyl-le41} imply that the distance between $C$ and $C_2$ is at most three.

On the other hand, since $z\in V(C)$, by (\ref{cl-far}) the distance
between $C$ and $C_2$ is at least three.  Therefore, the distance between $C$ and $C_2$ is exactly three.
By Lemma~\ref{lemma-cyl-le4}, $\ell(C)=\ell(C_2)=4$ and $G''_2=R$.  The graph $G''$ contradicts Lemma~\ref{lemma-join}.
\end{proof}

We call a vertex $v$ {\em light} if $v$ is not precolored and the degree of $v$ is exactly three.

\claim{cl-no53}{The graph $G$ does not contain the following configuration: A $5$-face $F=v_1v_2v_3v_4v_5$ such that $v_1$, $v_3$, $v_4$ and $v_5$ are light,
and either $v_2$ is light, or both $v_4$ and $v_5$ have a precolored neighbor.}
\begin{proof}
If $v_i$ is a light vertex of $F$, then let $x_i$ be the neighbor of $v_i$ that is not incident with $F$, for $1\le i\le 5$.
By (\ref{cl-cyc}), the vertices $x_1$, \ldots, $x_5$ are distinct.  By (\ref{cl-far}), we may assume that all precolored neighbors of the vertices of $F$ belong to $C_1$.

If both $x_4$ and $x_1$ are precolored, then there exists a path $P\subseteq C_1$ joining $x_1$ and $x_4$ and a closed
region $\Delta$ of the plane bounded by the cycle $K$ formed by $P$ and $x_1v_1v_5v_4x_4$ such that $\Delta$ contains neither $C_1$ nor $C_2$.
Since $\ell(K)\le 7$, Corollary~\ref{cor-cycles} implies that the open interior of $\Delta$ is a face.  However, $\Delta\neq F$, which implies
that $v_5$ has degree two.  This is a contradiction, thus at most one of $x_1$ and $x_4$ is precolored.  Similarly, at most one of $x_3$ and $x_5$
is precolored.

If $v_2$ is light, then by the symmetry of $F$ we may assume that either no vertex of $F$ has a precolored neighbor, or
that $x_4$ is precolored and $x_3$ is not.  By the previous paragraph, this also implies that $x_1$ is not precolored.
If $v_2$ is not light, then both $x_4$ and $x_5$ are precolored, and thus neither $x_1$ nor $x_3$ are precolored.

Let $G'$ be the graph obtained from $G$ by removing the light vertices of $F$ and adding the edge $x_1x_3$.
Since $x_1\neq x_3$, $G'$ has no loops.  Suppose that $\varphi$ extends to a coloring $\psi$ of $G'$.
If $v_2$ is light, then each vertex of $F$ has one precolored neighbor, thus it has two available colors.
Furthermore, the lists of colors available at $v_1$ and $v_3$ are not the same, thus $\psi$ extends to
a coloring of $F$, giving a coloring of $G$ that extends $\varphi$.
Suppose that $v_2$ is not light.
If $\psi(x_1)=\psi(v_2)$, then we can color vertices of $F$ in order $v_3$, $v_4$, $v_5$, $v_1$.
Similarly, $\psi$ extends to $F$ if $\psi(x_3)=\psi(v_2)$.  Therefore, assume that $\psi(x_1)=1$,
$\psi(v_2)=2$ and $\psi(x_3)=3$.  Then, color $v_1$ by $3$, $v_3$ by $1$ and extend the coloring to $v_4$
and $v_5$.  This is possible, since by (\ref{cl-samecol}), $\varphi(x_4)=\varphi(x_5)$.
We conclude that $\varphi$ extends to a coloring of $G$, which is a contradiction.

Therefore, $\varphi$ does not extend to a coloring of $G'$, and $G'$ has a nontrivial $(C_1\cup C_2)$-critical
subgraph $G''$.  By the minimality of $G$, we have $x_1x_3\in E(G'')$.  Note that the distance between $C_1$ and
$C_2$ in $G''$ is at least three, since neither $x_1$ nor $x_3$ is precolored.  Also, every cycle in $G''$
of length at most $4$ distinct from $C_1$ and $C_3$ contains the edge $x_1x_3$.  If $x_1x_3$ were an edge-cut,
then $G''$ contains no such cycle, and thus $G''$ would contradict Lemmas~\ref{lemma-cyl-le4} and \ref{lemma-cyl-le41}
or the minimality of $G$.  It follows that $x_1x_3$ is incident with two distinct faces in $G''$.
For a cycle $M$ in $G''$ containing $x_1x_3$,
let $\overline{M}$ be the closed walk in $G$ obtained from $M$ by replacing $x_1x_3$ by the path $x_1v_1v_2v_3x_3$.
Let $F'$ be the face of $G''$ incident with $x_1x_3$ such that the interior of the corresponding region bounded by
$\overline{F'}$ in $G$ contains $v_4$ and $v_5$.  By Corrolary~\ref{cor-cycles}, $\ell(\overline{F'})\ge 10$,
and thus $\ell(F')\ge 7$.

Consider a cycle $C$ of length at most $4$ in $G''$ distinct from $C_1$ and $C_2$.  If the cycle $\overline{C}$ of length at most $7$ does not separate
$C_1$ from $C_2$, then by Corollary~\ref{cor-cycles} it bounds a face.  Since $\overline{C}\neq F$,
we conclude that $v_2$ has degree two.  This is a contradiction,
since $v_2$ is not precolored.  We conclude that $C$ separates $C_1$ from $C_2$.
By Lemma~\ref{lemma-conn}, $G''$ is connected.  Furthermore, by the minimality of $G$, the graph $G''$ satisfies the
assumptions of Corollary~\ref{cor-cyl-special}.  Since the distance between $C_1$ and $C_2$ in $G''$ is at least three
and $G''$ has a face $F'$ of length at least $7$, Corollary~\ref{cor-cyl-special}(a) implies that
$G''\in \{D_9,D_{10},A_{12}, A'_{12}, Z_4D_8, Z_4D_9, O_6D_9, Z_5X_5, Z_5X'_5\}$.  Furthermore,
since all cycles of length at most $4$ distinct from $C_1$ and $C_2$ in $G''$ contain a common edge $x_1x_3$,
we conclude that $G''\in \{D_9,D_{10}, A_{12}, A'_{12}\}$.  Since neither $x_1$ nor $x_3$ is precolored,
the inspection of the possible choices for $G''$ shows that $G''$ contains a path $Q$ of length at most $3$
joining a vertex of $C_1$ with a vertex of $C_2$,
such that $x_1x_3\not\in E(Q)$.  However, $Q$ is a subgraph of $G$, contradicting (\ref{cl-far}).
\end{proof}

\claim{cl-no7}{The graph $G$ does not have any face $F$ of length at least $7$.}
\begin{proof}
Suppose for a contradiction that $F=v_1v_2\ldots v_k$ is a face of length $k\ge 7$ in $G$.
Since the distance between $C_1$ and $C_2$ is at least $5$, we may assume that $v_1$, $v_2$ and
$v_3$ are not precolored.  Let $G'$ be the graph obtained from $G$ by identifying $v_1$ with $v_3$
to a new vertex $v$.  Observe that $\varphi$ does not extend to a coloring of $G'$, thus $G'$
has a nontrivial $(C_1\cup C_2)$-critical subgraph $G''$.  The distance between $C_1$ and $C_2$
in $G''$ is at least three.  Furthermore, since $v_2$ has degree at least three and every cycle $C$
in $G''$ of length at most $4$ distinct from $C_1$ and $C_2$ corresponds to a cycle of length at
most $6$ in $G$ containing the path $v_1v_2v_3$, Corollary~\ref{cor-cycles} implies that each such cycle $C$ separates
$C_1$ from $C_2$ and satisfies $v\in V(C)$.  By the minimality of $G$, the graph $G''$ satisfies the
assumptions of Corollary~\ref{cor-cyl-special}(b).  By (\ref{cl-far}),
all paths of length at most $4$ in $G''$ between $C_1$ and $C_2$ contain
$v$.  This implies that $G''\in\{J_5, D_9, D_{10}, Z_4D_4, O_6D_4\}$.  However, in these graphs, it is
not possible to split $v$ to two vertices ($v_1$ and $v_3$) in such a way that the resulting graph contains neither
a path of length at most $4$ between $C_1$ and $C_2$ nor a cycle of length at most $4$ distinct from $C_1$ and $C_2$,
which is a contradiction.
\end{proof}

\claim{cl-no6}{All faces of $G$ distinct from $C_1$ and $C_2$ have length $5$.}
\begin{proof}
Let $F=v_1v_2\ldots v_k$ be a face of $G$ distinct from $C_1$ and $C_2$.  By (\ref{cl-no7}),
$k\le 6$.  Suppose for a contradiction that $k=6$.  Let us first consider the case that $v_1$, $v_3$ and $v_5$
are not precolored.
Then consider the graph $G'$ obtained from $G$ by identifying $v_1$, $v_3$ and $v_5$
to a single vertex $v$.  Note that $\varphi$ does not extend to a coloring of $G'$, thus $G'$ has a non-trivial
$(C_1\cup C_2)$-critical subgraph $G''$.

Consider now a cycle $C$ of length at most $4$ in $G''$ distinct from $C_1$ and $C_2$.
Note that $v\in V(C)$, and by symmetry, we may assume that a cycle $K\subseteq G$ can be obtained from $C$ by replacing
$v$ by $v_1v_2v_3$.
Suppose that $C$ does not separate $C_1$ from $C_2$.  Then $K$ does not separate $C_1$ from $C_2$, and by
Corollary~\ref{cor-cycles}, $K$ bounds a face distinct from $F$, hence $v_2$ has degree two.  Since neither $v_1$ nor $v_3$ is
precolored, we conclude that $C_1=v_2$ or $C_2=v_2$.  But that implies that $C$ separates $C_1$ from $C_2$ in $G''$, which is a
contradiction.  Therefore, every cycle of length at most $4$ distinct from $C_1$ and $C_2$ separates $C_1$ from $C_2$ in $G''$.

As in the proof of (\ref{cl-no7}), we conclude that $G''\in\{J_5, D_9, D_{10}, Z_4D_4, O_6D_4\}$.  Furthermore,
since it is possible to split $v$ to three vertices $v_1$, $v_3$ and $v_5$ so that the resulting graph contains neither
a cycle of length at most $4$ distinct from $C_1$ and $C_2$ nor a path between $C_1$ and $C_2$ of length at most $4$,
we have $G''\in\{J_5, D_9, D_{10}\}$.  Furthermore, we may assume that $C_1=c_1c_2c_3c_4$ has length $4$,
there exists a path $c_1w_1w_2c_3$, $v_1$ is adjacent to $w_1$, $v_3$ is adjacent to $w_2$ and $v_5$ is adjacent to a vertex of $C_2$
in $G$.  We choose the labels of $c_2$ and $c_4$ so that the $8$-cycle $c_1w_1v_1v_2v_3w_2c_3c_4$ does not separate
$C_1$ from $C_2$.  Since $v_2$ cannot be a non-precolored vertex of degree two, Corollary~\ref{cor-cycles} implies that $v_2$ is
adjacent to $c_4$, and it is not precolored.  Corollary~\ref{cor-cycles} also implies that $c_1c_2c_3w_2w_1$ is a face.
By (\ref{cl-far}), $v_4$ and $v_6$ are not precolored.  Therefore, we may
identify $v_2$, $v_4$ and $v_6$ instead, and by a symmetric argument, we conclude that $\ell(C_2)=4$ and $G$ is the graph
depicted in Figure~\ref{fig-6face}.  However, this graph is not $(C_1\cup C_2)$-critical.

\begin{figure}
\begin{center}
\includegraphics{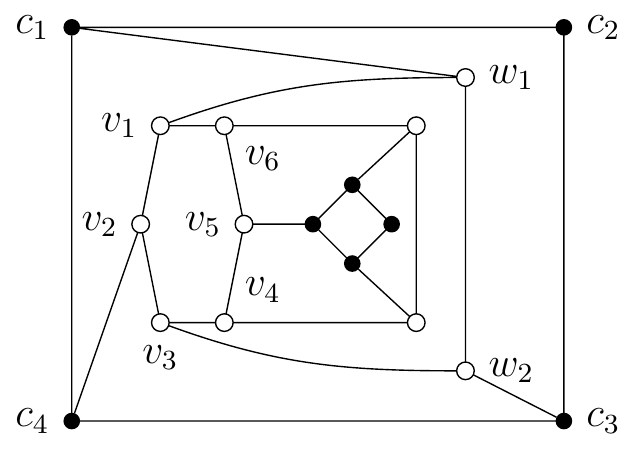}
\end{center}
\caption{A graph with a $6$-face.}\label{fig-6face}
\end{figure}

It follows that at least one of $v_1$, $v_3$ and $v_5$ is precolored, and by symmetry, at least one of $v_2$, $v_4$ and $v_6$ is
precolored.  If $v_1$ and $v_4$ were precolored and the rest of the vertices of $F$ were internal, then Corollary~\ref{cor-cycles}
implies that $v_1v_2v_3v_4$ or $v_1v_6v_5v_4$ together with a path in $C_1\cup C_2$ bounds a face, implying that $v_2$ or $v_6$
have degree two.  This is a contradiction, thus by symmetry, we may assume that $v_1,v_2\in V(C_1)$.  Since
$G$ does not contain a cycle of length at most $4$ distinct from $C_1$ and $C_2$, at least one of $v_3$ and $v_6$, say $v_3$, is not precolored.
Also, Corollary~\ref{cor-cycles} implies that $v_4$ and $v_5$ are not precolored.  Let us consider the graph $G'$ obtained by identifying
$v_1$, $v_3$ and $v_5$ to a single vertex $v$ and its $(C_1\cup C_2)$-critical subgraph $G''$.  By Corollary~\ref{cor-cyl-special}(c),
we have $G''\in \{R, A_{12}, A'_{12}, Z_4X_4b\}$.  However, all these graphs contain a path of length at most $4$ joining $C_1$ and $C_2$
that does not contain $v$, contradicting (\ref{cl-far}).
\end{proof}

\claim{cl-no51}{No face of $G$ is incident with $4$ light vertices.}
\begin{proof}
Suppose for a contradiction that $F=v_1v_2v_3v_4v_5$ is a face of $G$ such that $v_1$, $v_3$, $v_4$ and $v_5$
are light.  For $i\in\{1,3,4,5\}$, let $x_i$ be the neighbor of $v_i$ that is not incident with $F$.
By (\ref{cl-cyc}), the vertices $x_i$ are
distinct and $x_4$ is not adjacent to $x_5$.  Also, by (\ref{cl-no53}), we may assume that $x_4$ is not
precolored and that $v_2$ is not light. See Figure~\ref{fig-4light}.

\begin{figure}
\begin{center}
\includegraphics{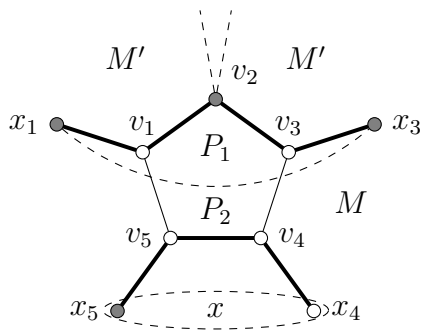}
\end{center}
\caption{Configuration from (\ref{cl-no51}).}\label{fig-4light}
\end{figure}

Let $G'$ be the graph obtained from $G$ by removing $v_1$, $v_3$, $v_4$ and $v_5$, identifying $x_4$ with $x_5$
to a new vertex $x$, and adding the edge $x_1x_3$.  Consider a coloring $\psi$ of $G'$.  We show that $\psi$
extends to a coloring of $G$:  Color both $x_4$ and $x_5$ by $\psi(x)$.  If $\psi(v_2)=\psi(x_1)$, then color $v_3$, $v_4$, $v_5$ and $v_1$ in this
order; each vertex has at least one available color.  The case that $\psi(v_2)=\psi(x_3)$ is symmetric.
Finally, if $\psi(x_1)$, $\psi(x_3)$ and $\psi(v_2)$ are pairwise different, then color
$v_1$ by $\psi(x_3)$ and $v_3$ by $\psi(x_1)$, and extend this coloring to $v_4$ and $v_5$ (this is possible,
since $x_4$ and $x_5$ are both colored by the same color $\psi(x)$).  We conclude that $\varphi$ does not extend to a coloring of $G'$,
and thus $G'$ has a nontrivial $(C_1\cup C_2)$-critical subgraph $G''$.

Let $C$ be a cycle of length at most $4$ in $G''$ distinct from $C_1$ and $C_2$, and let $K$ be the corresponding cycle in $G$, obtained by replacing
the edge $x_1x_3$ by the path $P_1=x_1v_1v_2v_3x_3$ or the vertex $x$ by the path $P_2=x_4v_4v_5x_5$ (or both).
Suppose that $C$ does not separate $C_1$ from $C_2$.  If $\ell(K)\le 7$, then Corollary~\ref{cor-cycles} implies that $K$ bounds a face,
and by (\ref{cl-no6}), $\ell(K)=5$.  However, that implies $\ell(C)\le\ell(K)-3\le 2$, which is a contradiction.  Therefore, $\ell(K)\ge 8$,
and thus $P_1, P_2\subseteq K$.  By planarity, $K-(P_1\cup P_2)$ consists of paths $Q_1$ between $x_1$ and $x_5$ and $Q_2$ between $x_3$ and $x_4$.
However, since $\ell(C)\le 4$, at least one of $Q_1$ and $Q_2$ has length one, contradicting (\ref{cl-cyc}).
We conclude that $C$ separates $C_1$ from $C_2$.  By Lemma~\ref{lemma-conn}, $G''$ is connected.

By (\ref{cl-far}), if $x_1\in V(C_i)$, then $x_3\not\in V(C_{3-i})$ for $i\in \{1,2\}$.
Also, if $x_5\in V(C_i)$, then $x_4$ has no neighbor in $V(C_{3-i})$.  It follows that the distance between $C_1$ and $C_2$ in $G''$ is at least two.
Let us also note that by (\ref{cl-no6}), the distance between $x_4$ and $x_5$ in $G$ is two, thus if $x_1x_3\not\in E(G'')$, then the distance between
$C_1$ and $C_2$ in $G''$ is at least three.

Suppose now that $v_2$ has degree at most three in $G$.  Since $v_2$ is not light and $v_1$ and $v_3$ are light, this implies that
$v_2=C_1$ or $v_2=C_2$. We assume the former.  In $G''$, $v_2$ has degree at most one.  By Lemma~\ref{lemma-cyl-le41}, $G''\in \{J_4,J_5\}$.
Let $x_2$ be the neighbor of $v_2$ in $G''$.  Note that $x_2\not\in \{x_1,x_3, x\}$ by (\ref{cl-cyc}).
Let $x'_2$ and $x''_2$ be the neighbors of $x_2$ distinct from $v_2$.  Similarly, we conclude that $\{x'_2,x''_2\}\cap \{x_1,x_3\}=\emptyset$.
Since $x_2x'_2x''_2$ is a triangle, we have $x\in \{x'_2,x''_2\}$, say $x=x''_2$.  Then a path starting with $v_2x_2x'_2$ shows that the distance between $C_1$ and $C_2$
in $G$ is at most three, which is a contradiction.  We conclude that $v_2$ has degree at least $4$.

For a face $M$ of $G''$, let $G_M$ be the subgraph of $G$ drawn in the region of the plane corresponding to $M$, bounded by the closed walk $\overline{M}$ obtained from the
boundary walk of $M$ by replacing $x_1x_3$ by $P_1$ or $x$ by $P_2$ (or both).  Let us note that the open interior of this region is either an open disk, or
a union of two open disks (the latter is the case when both $x_1x_3$ and $v_2$ are incident with $M$).

Suppose first that $x_1x_3\not\in E(G'')$.  Let us recall that in this case, the distance between $C_1$ and $C_2$ in $G''$ is at least three.
Since $\varphi$ extends to every proper subgraph of $G$, we have $x\in V(G'')$.  Let $M$ be the face of
$G''$ such that $v_1v_5, v_3v_4\in E(G_M)$.  If $\ell(\overline{M})=\ell(M)+6$, then $x$ forms a cut in $G''$ and $G''$ contains no cycle of length
at most $4$ distinct from $C_1$ and $C_2$.  We conclude that $G''=R$ because the distance between $C_1$ and $C_2$ in $G''$ is at least three.
However, $R$ is $2$-connected,
which is a contradiction.  Therefore, $\ell(\overline{M})=\ell(M)+3$.  If $v_2\in V(G'')$, then $G_M$ contains two $2$-chords $v_5v_1v_2$ and $v_4v_3v_2$.  If
$v_2\not\in V(G'')$, then $G_M$ contains a vertex $v_2$ of degree at least $4$ not contained in $\overline{M}$.  In both cases, Theorem~\ref{thm-crit12}
implies that $\ell(\overline{M})\ge 12$, and thus $\ell(M)\ge 9$.  By Corollary~\ref{cor-cyl-special}(d), $G''\in\{D_6,D_{10}\}$.
The former is not possible, since the distance between $C_1$ and $C_2$ in $G''$ is at least three, thus $G''=D_{10}$.  Note that $x$ must lie in
the triangle in $D_{10}$.  Observe that it is not possible to split such a vertex to two vertices $x_4$ and $x_5$ so that the distance between $C_1$ and $C_2$ is more than three,
contradicting (\ref{cl-far}).  This implies that $x_1x_3\in E(G'')$.

Let $M$ be the face of $G''$ incident with $x_1x_3$ such that $v_1v_5, v_3v_4\in E(G_M)$, and $M'$ the other face incident with $x_1x_3$.
Suppose first that $M=M'$.  Then by Corollary~\ref{cor-cyl-special}(e), $G''\in\{J_4, J_5, D_6,D_8,D_9,D_{10}, Z_4D_8, Z_4D_9, O_6D_9\}$.
Since $x_1x_3$ is not contained in any cycle, any cycle of length at most $4$ in $G''$ distinct from $C_1$ and $C_2$ contains $x$.
This implies that $G''\not\in \{Z_4D_8, Z_4D_9, O_6D_9\}$.  By inspection of the remaining choices for $G''$ we conclude that $x_1$ or $x_3$ is precolored, and belongs to say $C_2$.
Since the distance between $C_1$ and $C_2$ in $G$ is at least $5$, neither $x_4$ nor $x_5$ belongs to $C_1$, thus 
$G''\not\in \{J_4, D_6, D_8\}$ and $x$ is not precolored.  By symmetry, assume that $x_3\in V(C_2)$.  Then $x_1$ and $x$ both belong to a triangle in $G''$.
Note that $x_1$ and $x_5$ are not adjacent, thus $x_1$ is adjacent to $x_4$.  By (\ref{cl-far}),
$x_4$ is not adjacent to a vertex in $C_1$.  The $5$-cycle $x_1v_1v_5v_4x_4$ in $G$ does not
bound a face, thus by Corollary~\ref{cor-cycles} it separates $C_1$ from $C_2$.  Let $y$ be the common neighbor of $x_1$ and $x_5$ in $G$.
Since $G''\in \{J_5, D_9,D_{10}\}$, $x_5$ and $y$ have neighbors in $C_1$.  By (\ref{cl-no6}), $x_4$ and $x_5$ have a unique common neighbor $z$
in $G$; also, the distance between $x_3$ and $x_4$ is two.  By (\ref{cl-far}), $z\not\in V(C_1)$.  Note that $z\neq x_1$, since otherwise
the $4$-cycle $x_1yx_5z$ would contradict (\ref{cl-cyc}).  Observe that $G$ contains an $8$-cycle $K$ consisting of $x_5zx_4x_1y$, edges between $y$ and
$C_1$ and between $x_5$ and $C_1$ and a path in $C_1$ such that $K$ does not separate $C_1$ from $C_2$.  Since $z$ has degree at least three,
Corollary~\ref{cor-cycles} implies that $z$ is adjacent to a vertex of $C_1$.  However, this implies that the distance of $x_4$ is two both
from $x_3\in V(C_2)$ and from $C_1$, contradicting (\ref{cl-far}).

Therefore, $M\neq M'$.  Let us note that every path of length two between $C_1$ and $C_2$ in $G''$ contains
$x_1x_3$, and every path of length at most $4$ contains $x_1x_3$ or $x$, and both $x_1x_3$ and $x$ are incident with $M$.
Suppose now that $\ell(M')\ge 7$.  Corollary~\ref{cor-cyl-special}(f) implies that $G''\in \{D_9,D_{10},A_7, A_{12}, A'_{12}\}$.
If $G''\in \{D_9,D_{10}\}$, then $x$ is adjacent both to $x_1$ and $x_3$, contradicting (\ref{cl-cyc}) or planarity.
Otherwise, let $C\subseteq G''$ be the cycle of length $4$ distinct from $C_1$ and $C_2$. Note that $x_1x_3\not\in E(C)$,
thus $x\in V(C)$.  But if $x$ is split to two vertices ($x_4$ and $x_5$) so that $C$ is not a cycle on the resulting graph, 
then the resulting graph contains
a path of length at most $4$ between $C_1$ and $C_2$ that does not contain $x_1x_3$, contradicting (\ref{cl-far}).
It follows that $\ell(M')\le 6$.

By (\ref{cl-cyc}), every path between $x_1$ and $v_2$ other than $x_1v_1v_2$ and every path between $x_3$ and $v_2$ other
than $x_3v_3v_2$ has length at least three.  Since $\ell(M')\le 6$, we conclude that $v_2$ is not incident with $M'$,
and thus $v_2\not\in V(G'')$.  Therefore, $G_{M'}$ is not a union of two cycles intersecting in $v_2$.
Since $v_2$ has degree at least $4$, Theorem~\ref{thm-crit12} implies that $\ell(\overline{M'})\ge 10$.
It follows that $\ell(M')<\ell(\overline{M'})-3$, and thus $P_2\subseteq \overline{M'}$.

Let us again consider a cycle $C$ of length at most $4$ in $G''$ distinct from $C_1$ and $C_2$, and let $K$ be the corresponding cycle in $G$, obtained by replacing
$x_1x_3$ by $P_1$ or $x$ by $P_2$ or both.  Since $M$ and $M'$ are both incident with both $x$ and $x_1x_3$, there is a cut in $G''$ formed by $x$ and $x_1x_3$. Thus $P_1\cup P_2\subseteq K$.  However, this contradicts
(\ref{cl-cyc}) or planarity.  It follows that $G''$ does not contain a cycle of length at most $4$ distinct from $C_1$ and $C_2$.
Since $M$ and $M'$ are incident with both $x_1x_3$ and $x$, the minimality of $G$ and Lemma~\ref{lemma-cyl-le4} imply that $G''=T_1$.  But,
$T_1$ contains two edge-disjoint paths of length two between $C_1$ and $C_2$, and at most one of them contains $x_1x_3$.  It follows
that the distance between $C_1$ and $C_2$ in $G$ is at most $4$, contradicting (\ref{cl-far}).
\end{proof}

Let us assign the {\em initial charge} $c_0(v)=\deg(v)-4$ to each vertex and $c_0(F)=\ell(F)-4$ to each face of $G$
(including $C_1$ and $C_2$).  By Euler's formula, the sum of these charges is $-8$.  Now, each face of $G$ distinct
from $C_1$ and $C_2$ sends a charge of $1/3$ to each incident light vertex.  This way we obtain the final charge $c$.
Clearly, $c(v)\ge 0$ for each non-precolored vertex $v$, and $c(v)>0$ if $\deg(v)>4$.

\claim{cl-inner}{The final charge of each face $F$ of $G$ distinct from $C_1$ and $C_2$ is non-negative.  Furthermore,
if $F$ is incident with less than three light vertices, then $c(F)>0$.}
\begin{proof}
By (\ref{cl-no6}), $\ell(F)=5$, and thus $c_0(F)=1$.  If $F$ is incident with $k$ light vertices, then $c(F)=1-k/3$.
Furthermore, (\ref{cl-no53}) and (\ref{cl-no51}) imply that $k\le 3$, hence $c(F)\ge 0$, and if $k<3$, then $c(F)>0$.
\end{proof}

A face $F$ distinct from $C_1$ and $C_2$ is {\em $C_i$-close} (for $i\in\{1,2\}$) if $F$ shares an edge with $C_i$.
By (\ref{cl-far}) and (\ref{cl-no6}), a $C_1$-close face cannot share a vertex with a $C_2$-close face.

\claim{cl-outer}{For $i\in\{1,2\}$, the sum $S_i$ of the final charges of $C_i$ (if $C_i$ bounds a face), the vertices of $C_i$
and the $C_i$-close faces is at least $-4$, and if it is equal to $-4$, then $V(G)\setminus V(C_1 \cup C_2)$ contains a vertex of degree at least $5$.}
\begin{proof}
If $C_i$ is equal to a single vertex $v$, then by (\ref{cl-2conn}) its degree is at least $2$, thus $c(v)=-2>-4$.

Assume now that $C_i$ is a triangle $v_1v_2v_3$.  Then $c(C_i)=-1$. For $1\le j<k\le 3$, let $F_{jk}$ be the $C_i$-close face that
shares the edge $v_jv_k$ with $C_i$.  If all vertices of $C_i$ have degree at least three, then the final charge of each of them
is at least $-1$, and by (\ref{cl-inner}), $S_i\ge -4$.  Furthermore, if $S_i=-4$, then all vertices of $C_i$ have degree
exactly three, and all non-precolored vertices of $C_i$-near faces are light.  However, this implies that
$V(F_{12}\cup F_{23}\cup F_{13})\setminus V(C_i)$ induces a cycle $C$ of length $6$ consisting of light vertices.
Observe that every coloring of $G-V(C)$ extends to a coloring of $G$, contradicting the criticality of $G$.

Let us consider the case that say $v_1$ has degree two.
By (\ref{cl-no6}), $F_{12}=F_{13}$ is a $5$-face. Replacing the path $v_2v_1v_3$ in $F_{12}$ by $v_2v_3$ results
in a 4-cycle, contradicting (\ref{cl-cyc}).

Finally, assume that $C_i=v_1v_2v_3v_4$ has length $4$, and thus $c(C_i)=0$.  For $1\le j\le 4$, let
$F_j$ be the $C_i$-close face that shares the edge $v_jv_{j+1}$ with $C_i$ (where $v_5=v_1$).
If all vertices of $C_i$ have degree at least three, then $c(v_j)\ge -1$ for $1\le j\le 4$, and
$S_i\ge -4$.  Furthermore, $S_i=-4$ only if $\deg(v_j)=3$ for $1\le j\le 4$ and all non-precolored
vertices of $C_i$-close faces are light.  However, then $(F_1\cup F_2\cup F_3\cup F_4)-V(C_i)$
is a cycle $C$ of $8$ light vertices.  Observe that any coloring of $G-V(C)$ extends to a coloring of $G$,
contradicting the criticality of $G$.

Therefore, we may assume that $\deg(v_1)=2$.  By (\ref{cl-no22}), we have $\deg(v_2),\deg(v_4)\ge 3$.
Suppose now that $\deg(v_3)\ge 3$.  If at least one vertex of $C_i$ has degree greater than $3$,
then $S_i\ge c(v_1)+c(v_2)+c(v_3)+c(v_4)+c(F_1)>-4$.  Let us consider the case that $\deg(v_2)=\deg(v_3)=\deg(v_4)=3$.
By (\ref{cl-no6}), $(F_1\cup F_2\cup F_3)-V(C_i)$ is a $5$-cycle $w_1w_2w_3w_4w_5$,
where $w_1$ is adjacent to $v_2$, $w_3$ is adjacent to $v_3$ and $w_5$ is adjacent to $v_4$.
If $w_1$ and $w_5$ are light, then (\ref{cl-no6}) implies that $w_2$ and $w_4$ have a common
neighbor $x$ such that $w_4w_5w_1w_2x$ is a $5$-face.  Since $w_2$ has degree at least three, $x\neq w_3$,
and the $4$-cycle $w_2w_3w_4x$ contradicts (\ref{cl-cyc}).  Therefore, assume that say $\deg(w_1)\ge 4$.
Then $S_i\ge -5+c(F_1)+c(F_2)\ge -4$, and $S_i=-4$ only if $w_2$, $w_3$, $w_4$
and $w_5$ are light.
If that were the case and all vertices of $V(G)\setminus V(C_1 \cup C_2)$ had degree at most $4$, then $\deg(w_1)=4$.
Let $x$ be the neighbor of $w_1$ distinct from $w_2$, $w_5$ and $v_2$.  Let $G'=G-V(C_i)-\{w_1,w_2,w_3,w_4,w_5\}$,
and let $G''$ be a $(C_{3-i}\cup x)$-critical subgraph of $G'$ such that every precoloring of $C_{3-i}\cup x$ that
extends to $G''$ also extends to $G'$.  Note that the distance between $x$ and $C_{3-i}$ is at least three
and $G''$ does not contain a cycle of length at most $4$ distinct from $C_{3-i}$,
and thus by the minimality of $G$ and Lemma~\ref{lemma-cyl-le41}, $G''$ is trivial.
It follows that every precoloring of $x$ and $C_{3-i}$ extends to $G'$.  Let $\varphi'$ be a coloring
of $G'$ that matches $\varphi$ on $C_{3-i}$, such that $\varphi'(x)=\varphi(v_2)$.
Then $\varphi'\cup \varphi$ extends to a coloring of $G$, since every vertex of the $5$-cycle
$w_1w_2w_3w_4w_5$ has two available colors, and the lists of colors available at $w_3$ and $w_5$
are not the same.  This is a contradiction.

Finally, consider the case that $\deg(v_3)=2$.  If $\deg(v_2)=3$, then by (\ref{cl-no6}),
$(F_1\cup F_3)-\{v_1,v_2,v_3\}$ is a $4$-cycle, contradicting (\ref{cl-cyc}).  We conclude
that $\deg(v_2)\ge 4$, and by symmetry, $\deg(v_4)\ge 4$.  It follows that
$S_i\ge c(v_1)+c(v_2)+c(v_3)+c(v_4)+c(F_1)+c(F_2)>-4$.
\end{proof}

By (\ref{cl-inner}) and (\ref{cl-outer}), we have
$$-8=\sum_{v\in V(G)} c(v)+\sum_{F\in F(G)} c(F)\ge c(w)+S_1+S_2 > -8,$$
where $w$ is the vertex of $V(G)\setminus V(C_1 \cup C_2)$ of maximum degree.  This is a contradiction.
\end{proof}

Theorem~\ref{thm-cyl} follows from Lemmas~\ref{lemma-cyl-le4} and \ref{lemma-cyl-ge5}. 
Together with Lemmas~\ref{lemma-join} and \ref{lemma-conn} have the following corollary:

\begin{corollary}\label{cor-cyl}
Let $G$ be a graph embedded in the cylinder with boundaries $C_1$ and $C_2$ such that $\ell(C_1), \ell(C_2)\le 4$.
If $G$ is nontrivial $(C_1\cup C_2)$-critical and every cycle of length at most $4$ distinct from $C_1$ and $C_2$ separates $C_1$ from $C_2$,
then $G\in \CC$ or $G$ is one of the graphs drawn in Figures~\ref{fig-44}, \ref{fig-34},
\ref{fig-join2a}, \ref{fig-join2b}, \ref{fig-join2c}, \ref{fig-join3a}, \ref{fig-join3b}, \ref{fig-join3c}
or \ref{fig-joinmany}.
\end{corollary}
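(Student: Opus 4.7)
The plan is to show that under the hypotheses, $G$ fits into the framework of Lemma~\ref{lemma-join}, so that the classification follows. First, by Lemma~\ref{lemma-conn}, the graph $G$ is connected. Next, I would list every cycle of length at most $4$ in $G$. By hypothesis, each such cycle distinct from $C_1$ and $C_2$ separates $C_1$ from $C_2$ on the cylinder; since two separating cycles on a cylinder with the same pair of boundary faces must be nested, they inherit a natural linear order. Label them $C_1 = K_0, K_1, \ldots, K_k = C_2$ so that $K_i$ separates $K_{i-1}$ from $K_{i+1}$ for $1 \le i \le k-1$.

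Next, I would verify the distance condition needed by Lemma~\ref{lemma-join}. For each $0 \le i \le k-1$, let $G_i$ denote the subgraph of $G$ drawn between $K_i$ and $K_{i+1}$. By Lemma~\ref{lemma-crs}, $G_i$ is $(K_i \cup K_{i+1})$-critical, and by our ordering every cycle of $G_i$ distinct from $K_i$ and $K_{i+1}$ has length at least $5$. Since $K_i$ and $K_{i+1}$ are either single vertices or cycles of length at most $4$ bounding faces of $G_i$, Theorem~\ref{thm-cyl} (equivalently, the combination of Lemmas~\ref{lemma-cyl-le4} and \ref{lemma-cyl-ge5}) applies and yields that the distance between $K_i$ and $K_{i+1}$ in $G_i$ (hence in $G$) is at most $4$.

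With all the hypotheses of Lemma~\ref{lemma-join} verified, its conclusion gives exactly the desired alternative: $G$ is one of the graphs described by Lemmas~\ref{lemma-cyl-le4}, \ref{lemma-join2}, or \ref{lemma-join3} (i.e., one of the graphs in Figures~\ref{fig-44}, \ref{fig-34}, \ref{fig-join2a}, \ref{fig-join2b}, \ref{fig-join2c}, \ref{fig-join3a}, \ref{fig-join3b}, \ref{fig-join3c}), or $G \in \CC$, or $G$ appears in Figure~\ref{fig-joinmany}.

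Because the corollary is really a packaging theorem that bundles together the prior analysis, no serious obstacle is expected. The only point requiring mild care is the degenerate situation in which some $G_i$ coincides with $K_i \cup K_{i+1}$ (no interior vertices), where the distance inequality is automatic; this slots in harmlessly and does not affect the application of Lemma~\ref{lemma-join}.
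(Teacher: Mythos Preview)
Your proposal is correct and follows exactly the route the paper indicates: the corollary is stated as an immediate consequence of Theorem~\ref{thm-cyl} together with Lemmas~\ref{lemma-join} and~\ref{lemma-conn}, and you unpack precisely that combination. The one sentence that could use a word of justification is the claim that the short separating cycles are linearly nested; this is where the hypothesis that no $(\le 4)$-cycle is $\{C_1,C_2\}$-contractible is used (two crossing separating cycles would produce a contractible $(\le 4)$-cycle), but this is a routine observation and does not affect the validity of your argument.
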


\section{The main result}\label{sec-main}

Theorem~\ref{thm-main} follows easily from Corollary~\ref{cor-cyl} and Theorem~\ref{thm-dkt}.
Let $G$ be a graph embedded in a surface $\Sigma$,
and let $\FF=\{F_1, F_2, \ldots, F_k\}$ be a subset of faces of $G$.  
We say that a subgraph $H$ of $G$  is {\em $\FF$-contractible}
if $H\not\in \FF$ and there exists a closed disk $\Delta\subseteq \Sigma$ such that $\Delta$ contains $H$, but $\Delta$ does not contain
any face of $\FF$.  For $F\in \FF$, we say that $H$ {\em surrounds $F$} if $H$ is not $\FF$-contractible and
there exists a closed disk $\Delta\subseteq \Sigma$ such that $\Delta$ contains $H$ and $F$, but no other face of $\FF$.
We say that a subgraph $H\subseteq G$ is {\em $\FF$-good} if $F_1\cup \ldots\cup F_k\subseteq H$ and
if $F$ is a face of $H$ that is not equal to a face of $G$, then $F$ has exactly two boundary walks, each of the walks has length $4$,
and the subgraph of $G$ drawn in the closed region corresponding to $F$ belongs to $\CC$.

Let $K$ be the constant from Theorem~\ref{thm-dkt}.  Let us note that $K > 8$.
Theorem~\ref{thm-main} follows trivially from Gr\"otzsch's theorem if $g=0$.
The following holds for graphs embedded in the cylinder:

\begin{lemma}\label{lemma-cylmain}
Let $G$ be a plane graph and $F_1$ and $F_2$ faces of $G$.  If $G$ is $(F_1\cup F_2)$-critical and
every cycle of length at most $4$ separates $F_1$ from $F_2$, then $G$ contains an $\{F_1,F_2\}$-good
subgraph with at most $\ell(F_1)+\ell(F_2)+4K+20$ vertices.
\end{lemma}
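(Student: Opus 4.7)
The strategy is to decompose $G$ along non-face $(\le 4)$-cycles: the ``outer'' pieces containing $F_1$ or $F_2$ (when these faces are long) are girth-$5$ and handled by Theorem~\ref{thm-dkt}, while the ``inner'' cylindrical piece (with both boundaries of length at most $4$) is handled by Corollary~\ref{cor-cyl}.

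In the base case $\ell(F_1),\ell(F_2)\le 4$, I would apply Corollary~\ref{cor-cyl} to $G$ directly: either $G\in\CC$, in which case $H=F_1\cup F_2$ is $\{F_1,F_2\}$-good (the single non-$G$-face of $H$ has the required two $4$-cycle boundary walks and $\CC$-filled interior); or $G$ is one of the finitely many graphs listed in Figures~\ref{fig-44}--\ref{fig-joinmany}, of size at most some absolute constant $M_0$, and $H=G$ works. Assume henceforth without loss of generality that $\ell(F_1)\ge 5$. If $G$ contains no non-face $(\le 4)$-cycle, then all $(\le 4)$-cycles of $G$ are among $F_1,F_2$, and Theorem~\ref{thm-dkt} directly gives $|V(G)|\le \ell(F_1)+\ell(F_2)+2K$, so $H=G$ suffices.

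Otherwise, let $C_1$ be a non-face $(\le 4)$-cycle closest to $F_1$, in the sense that the subgraph $G_1$ of $G$ drawn in the closed disk bounded by $C_1$ containing $F_1$ has no other non-face $(\le 4)$-cycle. Lemma~\ref{lemma-crs} implies $G_1$ is $(F_1\cup C_1)$-critical; since $\ell(F_1)\ge 5$ its only $(\le 4)$-cycle is $C_1$, so Theorem~\ref{thm-dkt} gives $|V(G_1)|\le \ell(F_1)+4+2K$. Symmetrically, if $\ell(F_2)\ge 5$ I split off a girth-$5$ piece $G_2$ using an innermost cycle $C_2$ from $F_2$ with $|V(G_2)|\le \ell(F_2)+4+2K$; otherwise I set $C_2=F_2$ and $G_2=F_2$. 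The core $G_{\text{core}}$ lying between $C_1$ and $C_2$ then falls under the base case, so Corollary~\ref{cor-cyl} yields either $G_{\text{core}}\in\CC$ (take $H_{\text{core}}=C_1\cup C_2$, with $|V(H_{\text{core}})|\le 8$) or $G_{\text{core}}$ is one of the finite graphs (take $H_{\text{core}}=G_{\text{core}}$, of size at most $M_0$). Setting $H=G_1\cup H_{\text{core}}\cup G_2$, the only non-$G$-faces of $H$ come from discarding a $\CC$-interior inside $H_{\text{core}}$, so $H$ is $\{F_1,F_2\}$-good, and summing vertex counts while subtracting the at most $4$ vertices shared along each of $C_1,C_2$ bounds $|V(H)|$ by $\ell(F_1)+\ell(F_2)+4K+\max(8,M_0)$. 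The main obstacle will be to confirm $M_0\le 20$ by inspection of the explicit non-$\CC$ graphs appearing in Corollary~\ref{cor-cyl}, which then yields the claimed bound $\ell(F_1)+\ell(F_2)+4K+20$; a few degenerate subcases ($C_1=C_2$, $G_{\text{core}}$ trivial, or an $F_i$ serving as its own innermost cycle) also need attention to avoid double-counting overlaps.
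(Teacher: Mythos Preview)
Your approach is essentially the paper's: peel off innermost $(\le 4)$-cycles $C_1,C_2$, bound the outer pieces $G_1,G_2$ by Theorem~\ref{thm-dkt}, and control the middle piece via Corollary~\ref{cor-cyl}. The paper streamlines this by not casing on $\ell(F_i)$ (Theorem~\ref{thm-dkt} applies uniformly) and by simply taking $H=G$ whenever the middle piece $M$ has at most $20$ vertices, otherwise concluding $M\in\CC$ and setting $H=G_1\cup G_2$; it also explicitly dispatches the case $V(C_1)\cap V(C_2)\ne\emptyset$ (not just $C_1=C_2$) via Corollary~\ref{cor-cycles}, which you should add to your degenerate-subcase list.
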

\begin{proof}
If $G$ does not contain a cycle of length at most $4$ distinct from $F_1$ and $F_2$, then by Theorem~\ref{thm-dkt}
we have $|V(G)|\le \ell(F_1)+\ell(F_2)+2K$, and we may set $H=G$. Otherwise, let $C_i$ be the cycle of length at most $4$ in $G$
such that the subgraph $G_i\subseteq G$ drawn between $F_i$ and $C_i$ is as small as possible, for $i\in \{1,2\}$.
By Theorem~\ref{thm-dkt}, $|V(G_i)|\le \ell(F_i)+\ell(C_i)+2K$.  Let $M$ be the subgraph of $G$ drawn between $C_1$ and $C_2$.
If $|V(M)|\le 20$, then $|V(G)|\le \ell(F_1)+\ell(F_2)+4K+20$, and we set $H=G$.  Suppose that $|V(M)|>20$.
If $C_1$ and $C_2$ are not vertex-disjoint, then there exists a subset $\Delta$ of the plane, disjoint with $F_1$ and $F_2$ and
homeomorphic to an open disk, such that the boundary of $\Delta$ is formed by a closed walk (of length at most $8$) in $C_1\cup C_2$ 
and all vertices of $M$ are contained in the closure of $\Delta$.  By a variant of Corollary~\ref{cor-cycles},
we would conclude that $V(M)=V(C_1\cup C_2)$, contrary to the assumption that $|V(M)|>20$. If $C_1$ and $C_2$ are vertex-disjoint,
then Corollary~\ref{cor-cyl} implies that $M\in\CC$, and we set $H=G_1\cup G_2$.
\end{proof}

Let $\alpha=21K+104$ and $\beta = 15K + 76$. 
For other surfaces, we prove the following generalization of Theorem~\ref{thm-main}:

\begin{theorem}\label{thm-maingen}
Let $G$ be a graph embedded in a surface $\Sigma$ of genus $g$ and let $\FF=\{F_1, F_2, \ldots, F_k\}$ be
a set of faces of $G$ such that the open region corresponding to $F_i$ is homeomorphic to the open disk for $1\le i\le k$.
Assume that $g\ge 1$ or $k\ge 3$.  If $G$ is $(F_1\cup F_2\ldots\cup F_k)$-critical and every $\FF$-contractible cycle has length at least $5$,
then $G$ has an $\FF$-good subgraph $H$ with at most $\ell(F_1)+\ldots+\ell(F_k) + \alpha g + \beta (k - 2) - 4$ vertices.
\end{theorem}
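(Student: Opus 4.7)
Write $B(g,k)=\alpha g+\beta(k-2)-4$ with $\alpha=21K+104$ and $\beta=15K+76$. The plan is induction on $|V(G)|$, cutting $G$ along short cycles not in $\FF$ until every $(\le\!4)$-cycle lies in $\FF$, at which point Theorem~\ref{thm-dkt} closes the argument. In the base case, setting $H=G$ gives $|V(H)|\le \sum_i\ell(F_i)+K(g+k)$ by Theorem~\ref{thm-dkt}; one checks directly that $K(g+k)\le B(g,k)$ whenever $g\ge 1$ or $k\ge 3$, the tightest subcases being $(g,k)\in\{(0,3),(1,1),(2,0)\}$ and each reducing to a positive linear combination of $K$ and $1$.

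For the inductive step, fix a cycle $C$ of length at most $4$ with $C\notin\FF$. Since no $\FF$-contractible short cycle exists, either (i) $C$ is non-contractible on $\Sigma$, or (ii) $C$ is contractible but every disk it bounds in $\Sigma$ contains some $F_j\in\FF$. Cut $\Sigma$ along $C$, duplicating its vertices and edges: in case (i) we obtain a single graph $G'$ on $\Sigma'$ of Euler genus $g-2$ with two new precolored faces (two-sided cut) or $g-1$ with one new face of length $2\ell(C)$ (one-sided cut), and in case (ii) we split $G$ into two pieces $G_1, G_2$ on capped surfaces $\Sigma'_i$ with $g_1+g_2=g$, each $(\FF_i\cup\{C\})$-critical by Lemma~\ref{lemma-crs}. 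The $\FF'$-contractibility hypothesis transfers because any short cycle of a cut graph pulls back to a short cycle or closed walk of $G$, which was either $\FF$-non-contractible or the cut cycle $C$ itself. Applying the inductive hypothesis to each piece and un-capping $C$ yields an $\FF$-good subgraph $H$ of $G$; its internal faces inherit the two-length-$4$-boundary-walks and $\CC$-interior property from $H'$ or $H_i$. The size bound uses $|V(H)|=|V(H')|-\ell(C)$ (respectively $|V(H_1)|+|V(H_2)|-\ell(C)$) together with the identities $B(g-2,k+2)-B(g,k)=-2\alpha+2\beta=-12K-56$, $B(g-1,k+1)-B(g,k)=-\alpha+\beta=-6K-28$, and $B(g_1,k_1+1)+B(g_2,k_2+1)-B(g,k)=-4$; combined with $\ell(C)\le 4$, these yield $|V(H)|\le \sum_i\ell(F_i)+B(g,k)$.

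The main obstacle is handling the pieces whose parameters $(g_i,k_i+1)$ lie outside the induction's domain, i.e.\ $g_i=0$ with $k_i\in\{0,1\}$. The subcase $(g_i,k_i)=(0,0)$ is eliminated outright: such $G_i$ would be a plane $C$-critical graph whose short cycles all equal $C$, forcing $G_i=C$ by Theorem~\ref{thm-aksen}, but then $C$ bounds an empty disk and is $\FF$-contractible, contradicting the hypothesis. The genuine difficulty is $(g_i,k_i)=(0,1)$, where Lemma~\ref{lemma-cylmain} only gives $|V(H_i)|\le \ell(F)+\ell(C)+4K+20$; the surplus $4K+20$ must be absorbed into the $\beta$-budget associated to the remaining precolored face. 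The constants $\alpha=21K+104$ and $\beta=15K+76$ are chosen precisely so that this amortisation goes through (note $\beta>4K+24\ge 4K+20+\ell(C)$), and a careful global accounting shows the recursion still closes. Finally, the exceptional small cases $g=1,k=0$ are vacuous by Thomassen's theorem~\cite{thom-torus}, while $g=2,k=0$ with a 2-sided non-separating short cycle reduces after cutting to the cylindrical setting of Lemma~\ref{lemma-cylmain}.
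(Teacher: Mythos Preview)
Your overall architecture matches the paper's, but there is a genuine gap in the separating case when one side is a cylinder, and your ``careful global accounting'' does not close it.

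Concretely, suppose $C$ is contractible and surrounds exactly one face $F_j\in\FF$ (this is the case $(g_i,k_i)=(0,1)$ you flag). Cutting gives a cylinder $G_1$ with precolored faces $\{F_j,C\}$, and a piece $G_2$ on $\Sigma$ with precolored faces $(\FF\setminus\{F_j\})\cup\{C\}$. The parameters of $G_2$ are $(g,k)$, \emph{the same as those of $G$}: you lost $F_j$ but gained $C$, so the count of precolored faces is unchanged. Applying your inductive hypothesis to $G_2$ therefore consumes the full budget $B(g,k)$. Adding the cylinder cost from Lemma~\ref{lemma-cylmain} and subtracting the shared $\ell(C)$ gives
\[
|V(H)|\le \Bigl[\ell(F_j)+\ell(C)+4K+20\Bigr]+\Bigl[\textstyle\sum_{i\ne j}\ell(F_i)+\ell(C)+B(g,k)\Bigr]-\ell(C)
= \ell + B(g,k) + \ell(C)+4K+20,
\]
which overshoots by up to $4K+24$. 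Your observation that $\beta>4K+24$ does not help: no $\beta$-unit is freed, because the number of precolored faces in $G_2$ is still $k$. Worse, $G_2$ can again contain a short cycle surrounding $C$, so the same overshoot recurs arbitrarily many times along the recursion; there is no bound on the number of such cuts in terms of $g$ and $k$ alone.

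The paper avoids exactly this trap by \emph{not} cutting along an arbitrary short cycle. It first checks whether some short cycle $C\notin\FF$ fails to surround any face of $\FF$; if so, cutting along that $C$ makes both sides strictly smaller in $(g,k)$ (or non-contractible), and your identity $B(g_1,k_1{+}1)+B(g_2,k_2{+}1)-B(g,k)=-4$ applies cleanly. Only when \emph{every} short cycle surrounds a face does the paper abandon recursion: it picks, for each $F_i$, an outermost surrounding $(\le\!4)$-cycle $C_i$, applies Theorem~\ref{thm-dkt} once to the short-cycle-free core $G'=G\setminus(\Delta_1\cup\cdots\cup\Delta_k)$, and Lemma~\ref{lemma-cylmain} once to each of the $k$ cylinders. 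This global step incurs the $4K+24$ surcharge exactly $k$ times, and the inequality $Kg+(5K+24)k\le\alpha g+\beta(k-2)-4$ (tight at $(g,k)\in\{(0,3),(1,1)\}$) is what the constants $\alpha,\beta$ are actually calibrated for. To repair your argument, you need either this case split or a strengthened inductive hypothesis that carries explicit slack proportional to $k$.
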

\begin{proof}
Let us prove the claim by the induction.  Let us assume that the claim is true for all graphs embedded in
surfaces of genus smaller than $g$, or embedded in $\Sigma$ with fewer than $k$ precolored faces.
Let $\ell=\ell(F_1)+\ldots+\ell(F_k)$.

Suppose first that $G$ contains a cycle $C\not\in \FF$ of length at most $4$ that does not surround any face in $\FF$.
Cut $\Sigma$ along $C$ and cap the resulting hole(s) by disk(s); the vertices and edges of $C$ are duplicated, resulting
in a graph $G'$.  Let us now discuss several cases:
\begin{itemize}
\item If the curve given by the drawing of $C$ in $\Sigma$ is one-sided, then $G'$ is embedded in a surface $\Sigma'$
of genus $g-1$.  Let $C_1$ be the face of $G'$ corresponding to $C$; note that $\ell(C_1)=2\ell(C)$.
Observe that $G'$ is $(\FF\cup \{C_1\})$-critical.  Thomassen~\cite{thom-torus} proved that every graph embedded in the projective plane without contractible cycles of length at most $4$
is $3$-colorable, and thus if $g=1$, then $k\ge 1$.  We conclude that if $g(\Sigma')=0$, then $|\FF\cup \{C_1\}|\ge 2$.

If $g(\Sigma')=0$ and $|\FF\cup \{C_1\}|=2$, then by Lemma~\ref{lemma-cylmain}, $G'$ has an $(\FF\cup \{C_1\})$-good subgraph $H'$ with
at most $\ell+\ell(C_1)+4K+20\le \ell + \alpha g + \beta(k-2)-4$ vertices.

Otherwise, we may apply the induction hypothesis, hence $G'$ has an $(\FF\cup \{C_1\})$-good subgraph $H'$
with at most $\ell + \ell(C_1) + \alpha (g-1) + \beta(k-1)-4 \le \ell+\alpha g+\beta (k-2) -4$ vertices.

In both cases, the graph $H\subseteq G$ obtained from $H'$ by identifying the corresponding vertices of $C_1$
is $\FF$-good, and has at most $\ell+\alpha g+\beta (k-2)-4$ vertices.

\item If $C$ is two-sided, then let $C_1$ and $C_2$ be the faces of $G'$ corresponding to $C$.  If $C$ is not separating,
then $G'$ is embedded in a surface of genus $g-2$.
If $g=2$ and $k=0$, then by Lemma~\ref{lemma-cylmain}, $G'$ has a $(\{C_1,C_2\})$-good subgraph $H'$ with
at most $\ell(C_1)+\ell(C_2)+4K+20\le \ell + \alpha g + \beta(k-2)-4$ vertices.  Otherwise, we can apply induction
hypothesis to $G'$ and conclude that it has a
$(\FF\cup \{C_1,C_2\})$-good subgraph $H'$ with at most $\ell + \ell(C_1) + \ell(C_2) + \alpha (g-2) + \beta k - 4\le \ell+\alpha g+\beta (k-2) -4$ vertices.
The graph $H\subseteq G$ obtained from $H'$ by identifying the corresponding vertices of $C_1$ and $C_2$ is
$\FF$-good and has at most $\ell+\alpha g+\beta (k-2)-4$ vertices.

\item Finally, if $C$ is two-sided and separating, then $G'$ consists of subgraphs $G_1$ and $G_2$ embedded in surfaces $\Sigma_1$ and $\Sigma_2$,
respectively, such that $g=g(\Sigma_1)+g(\Sigma_2)$.  Let $\FF_i$ be the subset of $\FF$ contained in $\Sigma_i$ and $k_i=|\FF_i|$, for $i\in \{1,2\}$.
Let $\ell_i=\sum_{F\in \FF_i} \ell(F)$.
Since $C$ is not $\FF$-contractible and does not surround a face of $\FF$, we have either $g(\Sigma_i)<g$, or $|\FF_i\cup \{C_i\}|<k$ for $i\in \{1,2\}$,
and furthermore, if $g(\Sigma_i)=0$, then $|\FF_i\cup \{C_i\}|\ge 3$.
By the induction hypothesis, $G_i$ has an $(\FF_i\cup \{C_i\})$-good
subgraph $H_i$ with at most $\ell_i+\ell(C_i)+\alpha g(\Sigma_i) + \beta(k_i-1) - 4$ vertices.
The graph $H\subseteq G$ obtained from $H_1$ and $H_2$ by identifying the corresponding vertices of $C_1$ and $C_2$
has at most $\ell+\ell(C_1)+\ell(C_2)-\ell(C)+\alpha g + \beta (k-2)-8\le \ell+\alpha g + \beta (k-2)-4$ vertices.
\end{itemize}

Therefore, we may assume that every cycle of length at most $4$ in $G$ surrounds a face in $\FF$.  Then, there exist cycles $C_1,\ldots, C_k\subseteq G$
such that
\begin{itemize}
\item for $1\le i\le k$, either $C_i=F_i$, or $\ell(C_i)\le 4$ and $C_i$ surrounds $F_i$,
\item if $\Delta_i$ is the open disk bounded by $C_i$ that contains the face $F_i$, then 
$\Delta_i\cap \Delta_j=\emptyset$ for $1\le i<j\le k$, and
\item the graph $G'$ obtained from $G$ by removing all vertices and edges contained in $\Delta_1\cup \ldots\cup \Delta_k$
contains no cycle of length at most $4$ distinct from $C_1$, $C_2$, \ldots, $C_k$.
\end{itemize}

Let $G_i$ be the subgraph of $G$ drawn in the closure of $\Delta_i$, for $1\le i\le k$.  Note that $G_i$ is $(F_i\cup C_i)$-critical,
and by Lemma~\ref{lemma-cylmain}, $G_i$ has an $(F_i\cup C_i)$-good subgraph $H_i$ with at most $\ell(F_i)+\ell(C_i)+4K+20$ vertices.
By Theorem~\ref{thm-dkt}, $|V(G')|\le \sum_{i=1}^k \ell(C_i) + K(g+k)$.  Note that $H=G'\cup H_1\cup \ldots\cup H_k$ is
$\FF$-good, and it has at most $\ell+ (4K+24)k + K(g+k)\le \ell + \alpha g + \beta(k-2) - 4$ vertices.
The previous inequality does not hold for $k=0$ and $g=1$. However, in this
case $G$ is a projective planar graph without contractible cycles of length at most 4 and hence $G$ is 3-colorable
by a result of Thomassen~\cite{thom-torus}.
\end{proof}

\begin{proof}[Proof of Theorem~\ref{thm-main}]
Follows from Gr\"otzsch's theorem and Theorem~\ref{thm-maingen}, with $f(g)=\alpha g$.
\end{proof}

\section{Programs}\label{sec-prog}
Both authors of the paper wrote independent programs implementing the algorithm following from Theorem~\ref{thm-alg},
as well as the programs to verify the claims of Theorem~\ref{thm-crit16} and Lemma~\ref{lemma-cyl-le4}.
The complete lists of the graphs, as well as programs used to
generate them can be found at
\oururl.
For the technical details describing the programs and their usage, see \texttt{README} files in
the subdirectories.
The subdirectory \texttt{dvorak} also contains the programs used to verify the claims of Section~\ref{sec-cyl}, which were
first derived manually without computer.

The most time-consuming part of the graph generation is criticality testing.  We applied the straightforward algorithm
following from the definition of the critical graph: given a planar graph $G$ with the outer face $B$, for each edge $e$
not incident with $B$ we tested whether there exists a precoloring of $B$ that does not extend to $G$, but extends to $G-e$.
We augmented this algorithm with a few simple heuristics to speed it up (e.g., all vertices in $V(G)\setminus V(B)$ must have degree
at least three).  Generating the set $\KK_{16}$ took about $10$ minutes on a 2.67GHz machine.  We believe that
by parallelization and possibly using a more clever criticality testing algorithm, it would be possible to generate the
graphs at least up to $\KK_{20}$, if someone would need them.


\begin{thebibliography}{10}

\bibitem{aksen}
{\sc Aksenov, V.~A.}
\newblock On continuation of $3$-colouring of planar graphs.
\newblock {\em Diskret. Anal. Novosibirsk 26\/} (1974), 3--19.
\newblock In Russian.

\bibitem{fourTriangles}
{\sc Borodin, O., Dvo\v{r}\'ak, Z., Kostochka, A., Lidick\'y, B., and Yancey,
  M.}
\newblock Planar 4-critical graphs with four triangles,.
\newblock submitted.

\bibitem{aksenNew}
{\sc Borodin, O.~V., Kostochka, A.~V., Lidick{\'y}, B., and Yancey, M.}
\newblock Short proofs of coloring theorems on planar graphs.
\newblock {\em European J. Combin. 36\/} (2014), 314--321.

\bibitem{dvkaw}
{\sc Dvo\v{r}\'ak, Z., and Kawarabayashi, K.}
\newblock Choosability of planar graphs of girth $5$.
\newblock Manuscript.

\bibitem{proof-lincrit}
{\sc Dvo\v{r}\'ak, Z., Kr\'al', D., and Thomas, R.}
\newblock Three-coloring triangle-free graphs on surfaces {III}. {G}raphs of
  girth five.
\newblock Manuscript.

\bibitem{gimbthom}
{\sc Gimbel, J., and Thomassen, C.}
\newblock Coloring graphs with fixed genus and girth.
\newblock {\em Trans. Amer. Math. Soc. 349\/} (1997), 4555--4564.

\bibitem{grotzsch}
{\sc Gr\"otzsch, H.}
\newblock Ein {D}reifarbensatz f\"ur dreikreisfreie {N}etze auf der {K}ugel.
\newblock {\em Wiss. Z. Martin-Luther-Univ. Halle-Wittenberg Math.-Natur. Reihe
  8\/} (1959), 109--120.

\bibitem{luke-hyperbolic}
{\sc Postle, L.}
\newblock 3-list-coloring graphs of girth at least five on surfaces.
\newblock Manuscript.

\bibitem{thomwalls}
{\sc Thomas, R., and Walls, B.}
\newblock Three-coloring {K}lein bottle graphs of girth five.
\newblock {\em J. Combin. Theory Ser. B 92\/} (2004), 115--135.

\bibitem{thom-torus}
{\sc Thomassen, C.}
\newblock Gr\"otzsch's 3-color theorem and its counterparts for the torus and
  the projective plane.
\newblock {\em J. Combin. Theory Ser. B 62\/} (1994), 268--279.

\bibitem{thom-surf}
{\sc Thomassen, C.}
\newblock The chromatic number of a graph of girth 5 on a fixed surface.
\newblock {\em J. Combin. Theory Ser. B 87\/} (2003), 38--71.

\bibitem{walls-enum}
{\sc Walls, B.}
\newblock {\em Coloring girth restricted graphs on surfaces}.
\newblock PhD thesis, Georgia Institute of Technology, 1999.

\bibitem{Youngs}
{\sc Youngs, D.}
\newblock 4-chromatic projective graphs.
\newblock {\em J. Combin. Theory, Ser.~B 21\/} (1996), 219--227.

\end{thebibliography}
\end{document}